\newtheorem{thm}{Theorem}[section]
\newtheorem{lem}[thm]{Lemma}
\newtheorem{prop}[thm]{Proposition}
\newtheorem{cor}[thm]{Corollary}
\newtheorem*{prob}{\bf Problem}
\theoremstyle{definition}\newtheorem{df}[thm]{Definition}
\theoremstyle{definition}\newtheorem{rem}[thm]{Remark}
\theoremstyle{definition}
\theoremstyle{definition}
\renewcommand{\phi}{\varphi}
\newcommand{\Homeo}{\mathcal{H}}
\newcommand{\Z}{\mathbb{Z}}
\newcommand{\Sph}{\mathbb{S}}
\newcommand{\Raf}{\mathcal{R}_0(\tilde{\alpha})}
\newcommand{\Rbt}{\mathcal{R}_0(\tilde{\beta})}
\newcommand{\C}{\mathbb{C}}
\newcommand{\T}{\mathbb{T}}
\newcommand{\taf}{\tilde{\alpha}}
\newcommand{\tbt}{\tilde{\beta}}
\newcommand{\Aff}{\operatorname{Aff}}
\newcommand{\Pj}{\mathcal{P}}
\newcommand{\id}{\operatorname{id}}
\newcommand{\ep}{\varepsilon}
\newcommand{\F}{{\mathcal F}}
\newcommand{\Lip}{\mathcal{L}}
\newcommand{\G}{\mathcal{G}}
\newcommand{\af}{{\alpha}}
\newcommand{\bt}{{\beta}}
\newcommand{\beq}{\begin{eqnarray}}
\newcommand{\eneq}{\end{eqnarray}}
\begin{document}
\title{Approximate $K$-conjugacies and $C^*$-approximate conjugacies of minimal dynamical systems}

\author{Sihan Wei}

\maketitle
\begin{center}
Research Center for Operator Algebras, 

School of Mathematics and Science, 

East China Normal University, 

Shanghai, China
\end{center}

\begin{abstract} 
In this article, we extend H. Matui and H. Lin's notions of approximate $K$-conjugacies and $C^*$-strongly approximate conjugacies to general minimal dynamical systems. In particular, upon modifying a result of the existence of minimal skew products, we answer a question of H. Lin and show that, associated with any Cantor minimal system $(K,\af)$, there is a class $\Raf$ of minimal skew products on $K\times \Omega$, such that for any two rigid homeomorphisms $\af\in\Raf$ and $\bt\in\Rbt$, the notions of approximate $K$-conjugacy and $C^*$-strongly approximate conjugacy coincide, which are also equivalent to a $K$-version of Tomiyama's commutative diagram, where $\Omega$ is an (infinite) connected finite CW-complex with torsion free $K$-groups and the so-called {\em Lipschitz-minimal-property (LMP)}.
%Let $K$ be the Cantor space and $\Sph^{2n}$ be an even-dimensional sphere. By applying a result of the existence of minimal skew products,  we show that, associated with any Cantor minimal system $(K,\af)$, there is a class $\Raf$ of minimal skew products on $K\times \Sph^{2n}$, such that for any two rigid homeomorphisms $\af\in\Raf$ and $\bt\in\Rbt$, the notions of approximate $K$-conjugacy and $C^*$-strongly approximate conjugacy coincide, which are also equivalent to a $K$-version of Tomiyama's commutative diagram. In fact, this is also the case if $\Sph^{2n}$ is replaced by any (infinite) finite-dimensional connected finite CW-complex with torsion free $K_0$-group, trivial $K_1$-group and the so-called {\em Lipschitz-minimal-property (LMP)}.
\quad\par
\quad\par
\noindent{\bf Keywords}. Classifications of dynamical systems, $K$-theory, Approximate conjugacies
\quad\par
\quad\par
\noindent{\bf 2020 Mathematics Subject Classification}: 37B05, 46L80(Primary), 37B02(Secondary)

\end{abstract}
\section{Introduction}
The interplay between the theory of $C^*$-algebras and topological dynamical systems, perhaps, began with J. Tomiyama's works on the relations of orbital properties of dynamical systems and ideal properties of the associated $C^*$-algebras (see \cite{JT}, for example), and the relations of conjugacies of topologically free dynamical systems and isomorphisms of $C^*$-algebras in \cite{JT2}. J. Tomiyama shows in \cite{JT2} that, for two topologically free dynamical systems $(X,\sigma)$ and $(Y,\tau)$, their crossed products $C^*(X,\sigma)$ and $C^*(Y,\tau)$ are isomorphic keeping their commutative subalgebras $C(X)$ and $C(Y)$ if and only if $(X,\sigma)$ and $C^*(Y,\tau)$ are flip conjugate. This celebrated theorem can be summarized by an equivalence of following commutative diagrams,

\begin{equation}
\begin{tikzcd}\label{Tu}
X   \ar[r, "\sigma"]  \ar[d, rightarrow, "h"'] \ar[dr, phantom, "\circlearrowleft"] & X \ar[d, rightarrow, "h"]\\
Y \ar[r, rightarrow, "\tau({\rm or}\ \tau^{-1})"] & Y
\end{tikzcd}
\hspace{1cm}\Longleftrightarrow\hspace{0.6cm}
\begin{tikzcd}
C(X)   \ar[r, "\varphi|_{C(X)}"]  \ar[d, hookrightarrow, "j_\sigma"'] \ar[dr, phantom, "\circlearrowleft"] & C(Y) \ar[d, hookrightarrow, "j_\tau"]\\
C^*(X,\sigma) \ar[r, rightarrow, "\varphi"] & C^*(Y,\tau)
\end{tikzcd}
                         \end{equation}                 
where $j_\cdot$ denotes the canonical embedding of the $C^*$-algebra of continuous functions into the associated crossed product. For minimal dynamical systems on the Cantor space (which we called, a Cantor minimal system), T. Giordano, I. F. Putnam and C. F. Skau showed in \cite{GPS} that, the strong orbit equivalence, isomorphisms of crossed products and isomorphisms of their ordered $K_0$-group with distinguished order unit coincide. For more details of minimal $\Z$-actions on the Cantor space, see \cite{GHH}, \cite{HPS}, \cite{IP}, \cite{IP2}, \cite{LP3}, and \cite{GPS2}, \cite{GMPS}, \cite{EM} or \cite{P2} of free minimal $\Z^d$-actions on the Cantor space.

On the other hand, inheriting the philosophy of Tomiyama's classification theorem, H. Lin and H. Matui define the concepts of approximate $K$-conjugacy and $C^*$-strongly approximate conjugacy in \cite{LM1}, and they show in \cite{LM1} that for  Cantor minimal systems, the approximate $K$-conjugacy and the $C^*$-strongly approximate conjugacy coincide, which are also equivalent to the strong orbit equivalence defined by T. Giordano, I. F. Putnam and C. F. Skau in \cite{GPS}. In \cite{LM} and \cite{LM2}, minimal dynamical systems on the product of the Cantor space and the circle are considered, and it is shown that, for rigid homeomorphisms (the condition that the crossed product has tracial rank zero), not only the approximate $K$-conjugacy is equivalent to the $C^*$-strongly approximate conjugacy, they are also equivalent to a $K$-version of Tomiyama's commutative diagram. This is also the case for the minimal rigid dynamical systems on the product of the Cantor space and the torus, as is shown by W. Sun in \cite{WS}. Therefore, in \cite{LP}, H. Lin asked the following question that, for two minimal rigid dynamical systems $(X_1, h_1)$ and $(X_2,h_2)$, what additional hypothesis are required for $h_1$ and $h_2$ (and of course, for $X_1$ and $X_2$) such that the approximate $K$-conjugacy and the $C^*$-strongly approximate conjugacy are equivalent?

In this paper, we define the {\em Lipschitz-minimal-property (LMP)} for compact metric spaces (Definition \ref{4.2}). Then, upon modifying the technique developed by S. Glasner and B. Weiss in \cite{GW} for proving the existence of skew minimal products, we answer the question of H. Lin by showing that, for any Cantor minimal system $(K,\taf)$ and any infinite finite-dimensional connected finite CW-complex $\Omega$ with the LMP, there is an uncountable class $\Raf$ of minimal skew products on $K\times\Omega$ such that, with the additional condition that $K_i(C(\Omega))\,(i=0,1)$ is torsion free, for any two minimal rigid homeomorphisms $\af=\taf\times c_1\in\Raf$ and $\bt=\tbt\times c_2\in \Rbt$, the approximate $K$-conjugacy and the $C^*$-strongly approximate conjugacy coincide, which are also equivalent to the $K$-version of Tomiyama's commutative diagram (Theorem \ref{6.1}). This includes the cases $\Omega=\mathbb{T}$ considered by H. Lin in and H. Matui in \cite{LM}, $\Omega=\mathbb{T}^2$ considered by W. Sun in \cite{WS}. In addition, this is also the case if $\Omega$ is an even-dimensional sphere $\Sph^{2n}$ or a product of even dimensional spheres of different dimensions. However, note that the even-dimensional spheres admit no minimal homeomorphism.

In the noncommutative settings, we propose the following problem for a better understanding of the influence of $K$-theroy on $C^*$-dynamical systems.
\begin{prob}
Let $A$ be a unital simple $C^*$-algebra and $\af,\bt: A\to A$ two automorphisms. For two $C^*$-dynamical systems $(A,\af)$ and $(A,\bt)$, what additional hypothesis are required for $A, \af$ and $\bt$, such that there is a dynamically-interpretative equivalence relation of $(A,\af)$ and $(A,\bt)$ corresponding to the following Tomiyama's $K$-commutative diagram:
\begin{center}
\begin{tikzcd}
K_i(A)   \ar[r, "(\af)_{*i}"]  \ar[d, rightarrow, "(j_\alpha)_{*i}"'] \ar[dr, phantom, "\circlearrowleft"] & K_i(A) \ar[d, rightarrow, "(j_\beta)_{*i}"]\\
K_i(A\times_\af\Z) \ar[r, rightarrow, "\kappa_i"] & K_i(A\times_\bt\Z).
\end{tikzcd}
\end{center}
\end{prob}

The paper is organized as follows.  In Sect.~\ref{sec:2}, we introduce the notations. In Sect.~\ref{sec:3}, the notion of the rigidity of a minimal dynamical system (Definition \ref{3.5}) and a sufficient condition for a minimal dynamical system on $K\times\Omega$ to be rigid (Theorem \ref{3.7}) are given. In Sect.~\ref{sec:4}, we define the {\em Lipschitz-minimal-property (LMP)} for compact metric spaces (Definition \ref{4.2}) and construct an uncountable class $\Raf$ of minimal homeomorphisms on $K\times\Omega$ associated with any Cantor minimal system $(K,\taf)$ (Theorem \ref{4.5}). In Sect.~\ref{sec:5}, we concentrate on the concepts of approximate $K$-conjugacy and $C^*$-strongly approximate conjugacy, and prove a sufficient and necessary condition for two minimal homeomorphisms on $K\times\Omega$ to be approximately $K$-conjugate, which also unifies the various definitions of approximate $K$-conjugacies made by H. Lin (Theorem \ref{5.13}). In Sect.~\ref{sec:6} we give a proof of the main theorem (Theorem \ref{6.1}). In Sect.~\ref{sec:7}, we consider the the situation under which $K_1(C(\Omega))$ is not assumed to be trivial and prove another version of the main theorem (Theorem \ref{7.9}) with additional conditions on the orbit-breaking subalgebras.
\vspace{0.25cm}

{\bf Acknowledgements.} \ The author was partially supported by Shanghai Key Laboratory of PMMP, Science and Technology Commission of Shanghai Municipality (STCSM), grant $\#$13dz2260400 and a NNSF grant (11531003). The author is grateful to Professor H. Lin for discussions and his helpful advices. The main problems under consideration (Theorem \ref{6.1} and Theorem \ref{7.9})  was proposed by H. Lin and the author in 2019.

\section{Preliminaries}\label{sec:2}
\begin{df}
Let $A$ be a unital $C^*$-algebras. For a subset $\mathcal{F}\subset A$, we use $C^*(\mathcal{F})$ to denote the $C^*$-subalgebra of $A$ generated by $\mathcal{F}$. We use $U(A)$ and $\mathcal{P}(A)$ to denote the unitary group of $A$ and the set of projections in $A$. Denote $\mathcal{P}_\infty(A)=\bigcup_{n\ge1}\mathcal{P}(M_n(A))$.

For $\ep>0$, a subset $\F\subset A$ and an element $a\in A$, we write $a\in_{\ep}\F$, if $\|a-b\|<\ep$ for some $b\in\F$. We use $\mathcal{I}^{(k)}\,(k\geq1)$ to denote the class of all unital $C^*$-algebras which are unital hereditary $C^*$-subalgebras of the form $C(X)\otimes F$, where $X$ is a $k$-dimensional finite CW-complex and $F$ is a finite-dimensional $C^*$-algebra. In particular, we denote $\mathcal{I}^{(0)}$ the class of all finite-dimensional $C^*$-algebras.
\end{df}
\begin{df}[Definition 3.6.2, \cite{L1}]
A unital simple $C^*$-algebra $A$ is said to have tracial (topological) rank no more than $k$ if for any $\ep>0$, any finite set $\F\subset A$ and any nonzero element $a\in A_+$, there exist a nonzero projection $p\in A$ and a $C^*$-subalgebra $B\in\mathcal{I}^{(k)}$ with $1_B=p$ such that

(1) $\|px-xp\|<\ep$ for all $x\in\F$,

(2) $pxp\in_{\ep}B$ for all $x\in\F$ and

(3) $[1-p]\le[a]$, or equivalently, $1-p$ is equivalent to a projection in $Her(a)$.
\vspace{0.1cm}

We write $TR(A)\leq k$ if $A$ has tracial topological rank no more than $k$. If furthermore, $TR(A)\nleq k-1$, then we say $TR(A)=k$. It is also known that $TR(A)=0$ implies $RR(A)=0$ and ${\rm tsr}(A)=1$.
\end{df}
\begin{df}
For a unital $C^*$-algebra, we denote $T(A)$ the convex cone of tracial states of $A$, and ${\Aff}(T(A))$ the space of real continuous affine functions on $T(A)$. The $K_0(A)$ is equipped with the order unit $[1_A]$ and the positive cone $K_0(A)_+$. For $p\in\Pj(M_k(A))$ and $\tau\in T(A)$, we write $\tau_*([p])=\tau\otimes {\rm Tr}(p)$, 
where ${\rm Tr}(\cdot)$ is the standard (unnormalized) trace on $M_k(\C)$. The homomorphism $\rho_A: K_0(A)\to \Aff(T(A))$ is defined by
\[\rho_A(x)(\tau)=\tau_*([p])-\tau_*([q])\]
where $x=[p]-[q]\in K_0(A)$. By abuse of notation, for $a\in M_\infty(A)_{s.a.}$, we use $\rho_A(a)$ to denote the map which sends $\tau\in T(A)$ to $\tau\otimes {\rm Tr}(a)$.
\end{df}
\begin{df}
Let $G$ and $F$ be abelian groups. An extension of $F$ by $G$ is said to be pure, if every finitely generated subgroup of $F$ lifts. We use $Pext(G,F)$ to denote the group of pure extensions of $F$ by $G$ in $ext_\mathbb{Z} (G,F)$. For $C^*$-algebras $A$ and $B$, denote $KL(A,B)=KK(A,B)/Pext(K_{*-1}(A), K_{*-1}(B))$. Let $C_n$ be a commutative $C^*$-algebra with $K_0(C_n)=\Z/n\Z$ and $K_1(C_n)=0$. Denote $K_i(A,\Z/n\Z)=K_i(A\otimes C_n)$ and $\underline{K}(A)=\bigoplus_{i=0,1}\bigoplus_{n=1}^\infty K_i(A,\Z/n\Z)$. We remark that if $A$ satisfies the UCT, then 
\[KL(A,B)\cong {\rm Hom}_{\Lambda}(\underline{K}(A),\underline{K}(B)),\]
where ${\rm Hom}_{\Lambda}(\underline{K}(A),\underline{K}(B))$ is the group of homomorphisms from $\underline{K}(A)$ to $\underline{K}(B)$ associated with the direct sum decomposition and the so-called Bockstein operations (see Definition 5.7.13, Definition 5.7.14, Definition 5.8.13 in \cite{L1}).
\end{df}
\begin{df}
Let $A$ and $B$ be unital separable $C^*$-algebras and $\phi_n: A\to B$ be a sequence of contractive completely positive linear maps. We shall call $\{\phi_n\}$ a sequential asymptotic morphism, if
\[\lim_{n\to\infty}\|\phi_n(ab)-\phi_n(a)\phi_n(b)\|=0\]
for all $a,b\in A$. Note that for every projection $p\in M_n(A)$, $\phi_n(p)$ is close to a projection $q\in M_k(B)$ for sufficiently large $n$, and hence they are equivalent. We then use $[\phi_n(p)]=[q]$ to denote the equivalence.
\end{df}
\begin{df}
Let $X$ be an infinite compact metric space. By an (invertible) dynamical system on $X$, we mean the triple $(X,\af,\Z)$ where $\af$ is an action of $\Z$ on $X$ by an iteration of a single homeomorphism, which we still denote by $\af$. We then simply write $(X,\af)$ to represent a dynamical system on $X$. A dynamical system $(X,\af)$ is said to be minimal, if there is no nonempty proper $\af$-invariant closed subset of $X$, which is equivalent to the condition that every $\af$-orbit $Orb_\af(x)$ of $x\in X$ is dense in $X$. 

Throughout the paper, we use $C^*(X,\af)$ to denote the unital separable amenable $C^*$-algebra associated with a dynamical system $(X,\af)$. A dynamical system $(X,\af)$ is minimal if and only if $C^*(X,\af)$ is simple (see Theorem 4.3.3 in \cite{JT}).
\end{df}
\begin{df}
By a Cantor space, we mean a totally disconnected, perfect, compact metrizable space $K$. It is well-known that any two  Cantor spaces are homeomorphic, see for example, Theorem 2.12 in \cite{IP}. 

From now on, we will always use $K$ to denote the Cantor space. For a compact metric space $X$, we denote the group of homeomorphisms of $X$ onto itself by $\Homeo(X)$.  Endowing $\Homeo(X)$ with the metric $d(\cdot,\cdot)$ defined by
\[d(\af,\bt)=\sup_{x\in X}d_X(\af(x),\bt(x))+\sup_{x\in X}d_X(\af^{-1}(x),\bt^{-1}(x))\]
makes $\Homeo(X)$ a complete metric space.

From the totally disconnectedness of the Cantor space $K$, it is immediate that, for every compact connected metric space $\Omega$ and every homeomorphism $\af:K\times\Omega\to K\times\Omega$, there exists a homeomorphism $\tilde{\af}$ of $K$ onto itself, and a continuous map $c:K\to \Homeo(\Omega)$ such that 
\[\af(x,\omega)=(\tilde{\af}(x),c_x(\omega))\]
for all $(x,\omega)\in K\times\Omega$. We call the continuous map $c$ an $\tilde{\af}$-coboundary and write $\af=\tilde{\af}\times c$,
if $\tilde{\af}:K\to K$ and $c:K\to\Homeo(\Omega)$ is induced by $\af$.
\end{df}
\begin{df}[\cite{HPS}]
Let $(K,\tilde{\af})$ be a Cantor minimal system. A Kakutani-Rohlin partition of $(K,\af)$ is a partition of $K$ into nonempty disjoint clopen sets of $K$:
\[\Pj=\{X(v,j): v\in V, 1\leq j\leq h(v)\}\]
indexed by a finite set $V$ (of towers) and positive integers $h(v)$, called the height of the tower $v\in V$ satisfying 
\[\tilde{\af}(X(v,j))=X(v,j+1)\,(1\leq j\leq h(v)-1)\ \ {\rm and}\ \ \bigcup_{v\in V}X(v,1)=\tilde{\af}(\mathcal{R}(\Pj)),\]
where $\mathcal{R}(\Pj)=\bigcup_{v\in V}X(v,h(v))$ is called the roof set of $\Pj$.
\end{df}
\vspace{0.3cm}

\section{Orbit-breaking subalgebras and rigidity}\label{sec:3}
Throughout this section, we assume $K$ is the Cantor space, $\Omega$ admits the structure of a  finite-dimensional connected finite CW-complex and let $X=K\times\Omega$. For a  minimal homeomorphism $\af:X\to X$, write $A_\af=C^*(X,\af)$. Let $J\subset K$ be a closed set, we denote
\[A_J=C^*\{C(X), u_\af C_0((K\setminus J)\times\Omega)\},\]
where $u_\af$ is the implementing unitary of $\af$.
\begin{thm}\label{3.1}
Let $x\in K$. Then $A_{\{x\}}$ is a simple unital AH-algebra with no dimension growth. As a Consequence, we have $TR(A_{\{x\}})\leq1$.
\end{thm}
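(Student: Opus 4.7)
The plan is to realize $A_{\{x\}}$ as an AH-inductive limit of homogeneous $C^*$-algebras of uniformly bounded topological dimension. Since $(K,\taf)$ is a Cantor minimal system, Herman--Putnam--Skau gives a sequence of Kakutani--Rohlin partitions
$$\Pj_n=\bigl\{X_n(v,j):v\in V_n,\ 1\le j\le h_n(v)\bigr\},\quad n\ge 1,$$
each refining the previous, with $\min_v h_n(v)\to\infty$ and roof sets $\mathcal{R}(\Pj_n)\searrow\{x\}$. Setting $Y_n=(K\setminus\mathcal{R}(\Pj_n))\times\Omega$ and
$$B_n=C^*\bigl\{C(X),\,u_\af C_0(Y_n)\bigr\},$$
the nesting $\mathcal{R}(\Pj_n)\downarrow\{x\}$ gives $A_{\{x\}}=\overline{\bigcup_n B_n}$, so the whole task reduces to identifying each $B_n$ as a homogeneous building block of dimension $\dim\Omega$.

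For each tower $v\in V_n$ and $1\le i,j\le h_n(v)$, the elements
$$e^v_{ij}=u_\af^{i-j}\mathbf{1}_{X_n(v,j)\times\Omega}$$
lie in $B_n$, since each factor $u_\af\mathbf{1}_{X_n(v,k)\times\Omega}$ with $k<h_n(v)$ belongs to $B_n$ by construction; moreover they satisfy the matrix-unit relations within the tower. A tower-by-tower computation, using that the $(1,1)$-corner $e^v_{11}B_n e^v_{11}$ is generated by $C(X_n(v,1)\times\Omega)$, yields
$$B_n\ \cong\ \bigoplus_{v\in V_n}M_{h_n(v)}\!\bigl(C(X_n(v,1)\times\Omega)\bigr).$$
Because $X_n(v,1)$ is clopen in $K$ and hence zero-dimensional, the spectrum of each summand has covering dimension exactly $\dim\Omega$, a constant independent of $n$ and $v$; this is the no-dimension-growth bound. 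The connecting maps $B_n\hookrightarrow B_{n+1}$ are the natural refinement maps coming from splitting each $\Pj_n$-tower into $\Pj_{n+1}$-subtowers.

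Simplicity of $A_{\{x\}}$ is the classical fact for orbit-breaking subalgebras of a minimal crossed product: restricting the canonical conditional expectation $A_\af\to C(X)$ to $A_{\{x\}}$ remains faithful, so any nonzero ideal $I\triangleleft A_{\{x\}}$ meets $C(X)$ in a nonzero ideal with $\af$-invariant support (combining the shift relations that survive the orbit break with a standard argument), and minimality of $\af$ on $X$ then forces $I=A_{\{x\}}$. Once simplicity and no dimension growth are established, the final estimate $TR(A_{\{x\}})\le 1$ is immediate from H.~Lin's theorem that every simple unital AH-algebra with no dimension growth has tracial topological rank at most one.

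The main technical hurdle will be the isomorphism $B_n\cong\bigoplus_v M_{h_n(v)}(C(X_n(v,1)\times\Omega))$: one has to keep careful track of the $\taf$-coboundary $c:K\to\Homeo(\Omega)$ under conjugation by $u_\af^{i-j}$ when verifying the matrix-unit relations on the $\Omega$-factor, and to check that $C(X)$ together with the $e^v_{ij}$ generate all of $B_n$ (and nothing more). The key point is that the $\Omega$-fibers are moved around by the $c_y$'s but the $\Omega$-factor itself is not enlarged, so the spectrum of each summand stays exactly $X_n(v,1)\times\Omega$. Once that identification is pinned down, everything else is inductive-limit bookkeeping followed by a citation to the tracial-rank theorem.
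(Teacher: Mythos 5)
Your proposal is correct and follows essentially the same route as the paper: the same Kakutani--Rohlin tower decomposition with roof sets shrinking to $\{x\}$, the same matrix units $u_\af^{i-j}1_{X(n,v,j)\times\Omega}$ identifying each $B_n$ with a direct sum of $M_{h_n(v)}(C(X(n,v,\cdot)\times\Omega))$ (you cut down to the base of the tower where the paper uses the roof; either corner works), and the same appeal to Lin's tracial rank theorem for AH-algebras with no dimension growth. The only divergence is in the simplicity step, where you use faithfulness of the restricted conditional expectation while the paper passes to the groupoid picture and cites Renault's Proposition 4.6 applied to the orbit-broken equivalence relation $\mathcal{R}_x$, which is minimal because semi-orbits of a minimal homeomorphism are dense; both are standard and equivalent in substance.
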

\begin{proof}
Let $\alpha=\tilde{\alpha}\times c$ where $\tilde{\alpha}\in\mathcal{H}(K)$ and $c\in C(K,\mathcal{H}(\Omega))$. Then $(K,\tilde{\alpha})$ is a Cantor minimal system. Let $\mathcal{P}_n=\{X(n,v,j)\subset K: v\in V_n, j=1,2,\cdots, h_n(v)\}$
be a sequence of Kakutani-Rohlin partitions associated with $(K,\tilde{\alpha})$, such that the roof sets $\mathcal{R}(\mathcal{P}_n)=\bigcup_{v\in V_n} X(n,v,h_n(v))$
shrinks to the singleton $\{x\}$ and $\bigcup\mathcal{P}_n$ generates the topology of $K$. Let
\[A_n=C^*\{C(K\times\Omega), u_{\alpha}C((K\setminus\mathcal{R}(\mathcal{P}_n))\times\Omega)\}\]
Note that $A_n\subset A_{n+1}\,(n\geq1)$, and every continuous function $f\in C_0((K\setminus\{x\})\times\Omega)$ can be approximated by elements in $C((K\setminus\mathcal{R}(\Pj_n))\times\Omega$ because $\{\mathcal{R}(\Pj_n)\}$ shrinks to $\{x\}$, which follows that $A_{\{x\}}$ is the norm closure of the union of all $A_n$'s, that is, $\displaystyle A_{\{x\}}=\lim_{\longrightarrow}A_n$. Additionally, since $K\setminus \mathcal{R}(\Pj_n)=\bigcup_{v\in V_n}\bigcup_{1\leq j\leq h_n(v)-1} X(n,v,j)$, 
\[C((K\setminus \mathcal{R}(\mathcal{P}_n))\times\Omega)\cong \bigoplus_{v\in V_n}\bigoplus_{1\leq j\leq h_n(v)-1} C(X(n,v,j))\otimes C(\Omega).\]
Now we show that, for every $n\geq1$, 
\[A_n\cong\bigoplus_{v\in V_n} M_{h_n(v)}\otimes C(X(n,v,h_n(v)))\otimes C(\Omega)\]
which is a direct sum of homogeneous $C^*$-algebras with topological dimension almost ${\rm dim}(K\times\Omega)={\rm dim}(\Omega)$.
This will follow that $A_{\{x\}}$ is an $AH$-algebra with no dimension growth, whence the theorem is proved. For $i,j\in \{1,\cdots, h_n(v)\}$, let 
\[e_{i,j}^{n,v}=u_{\alpha}^{i-j}1_{X(n,v,j)}.\]
It is easy to verify that $\{e_{i,j}^{n,v}\}_{i,j=1}^{h_n(v)}$ satisfy the relations of matrix units for all $v\in V_n$. In fact, for every $n\geq1$,
\begin{align*}
e_{i,j}^{n,v}\cdot e_{k,l}^{n,v}&=u_{\alpha}^{i-j}1_{X(n,v,j)}\cdot u_{\alpha}^{k-l}1_{X(n,v,l)}\\
&=u_\af^{i-j+k-l}\cdot 1_{\af^{l-k}(X(n,v,j))}\cdot 1_{X(n,v,l)}\\
&=u_\af^{i-j+k-l}\cdot 1_{X(n,v,j+l-k)}\cdot 1_{X(n,v,l)},
\end{align*}
and since $\{X(n,v,j): v\in V_n, j=1,\cdots, h_n(v)\}$ are disjoint, we have, if $j=k$, then $e_{i,j}^{n,v}\cdot e_{k,l}^{n,v}=u_\af^{i-l}\cdot 1_{X(n,v,l)}=e_{i,l}^{n,v}$, and if $j\neq k$, then $e_{i,j}^{n,v}\cdot e_{k,l}^{n,v}=0$, that is, 
\[e_{i,j}^{n,v}\cdot e_{k,l}^{n,v}=\delta_{j,k}\cdot e_{i,l}^{n,v}\]
which shows that $\{e_{i,j}^{n,v}\}_{i,j=1}^{h_n(v)}$ satisfy the relations of matrix units. Also note that if $v\ne w$, then $e_{i,j}^{n,v}\cdot e_{i',j'}^{n,w}=0$ for all $1\leq i,j\leq h_n(v), 1\leq i',j'\leq h_n(w)$. Now define projections
\[p_{n,v}=\sum_{j=1}^{h_n(v)}1_{X(n,v,j)}\]
for every $v\in V_n$. Since $p_{n,v}u_\af(1-1_{\mathcal{R}(\Pj_n)})=u_\af\sum_{j=1}^{h_n(v)-1}1_{X(n,v,j)}=u_\af(1-1_{\mathcal{R}(\Pj_n)})p_{n,v}$, $p_{n,v}$ is central in $A_n$ for every $n\geq1$. As we have shown, it is now clear that $\{e_{i,j}^{n,v}\}_{i,j=1,2,\cdots,h_n(v)}$ forms a matrix units for $p_{n,v}A_np_{n,v}$. On the other hand, according to the definition of $A_n$, it is straightforward to check that
\[1_{X(n,v,h_n(v)}A_n1_{X(n,v,h_n(v)}=C(X(n,v,h_n(v))\times\Omega)\]
which follows that $A_n\cong\bigoplus_{v\in V_n} M_{h_n(v)}\otimes C(X(n,v,h_n(v)))\otimes C(\Omega)$
as designed. As for the simplicity of $A_{\{x\}}$, it is based on the following observation. The unital $C^*$-algebra $A_\af$ corresponds to the groupoid $C^*$-algebra whose associated  equivalence relation is
\[\mathcal{R}=\{((x,\omega),\af^k(x,\omega)): (x,\omega)\in X, k\in\Z\}\]
while $A_{\{x\}}$ corresponds to
\[\mathcal{R}_x=\mathcal{R}\setminus\{\af^k(x,\omega),\af^l(x,\omega): (x,\omega)\in X, k\geq0,l\leq 0\ {\rm or}\ k\leq0,l\geq0\}.\]
It is well-known that any positive orbit of a minimal homeomorphism is dense in the underlying space, which follows that $\mathcal{R}_x$ is dense in $X$. Finally, according to Proposition 4.6 of \cite{R}, $A_{\{x\}}$ is simple.
This completes the proof.
\end{proof}
\begin{rem}\label{3.2}
In \cite{LM}, the case of $\Omega=\mathbb{T}$ is considered, and it is shown in \cite{LM} that in this case, $A_{\{x\}}$ is an A$\T$-algebra. In \cite{WS} and \cite{KS}, the case of $\Omega=\T^2$ and $\Omega=\Sph^{2n-1}$ are considered, and it is shown that $A_{\{x\}}$ is an AH-algebra with no dimension growth. Hence, one may also replace $\Sph^{2n-1}$ with $\Omega$ to get Theorem \ref{3.1}.
\end{rem}
For a Cantor minimal system $(K,\taf)$, we denote 
\[K^0(K,\taf)=C(K,\Z)/\{f-f\circ\taf^{-1}: f\in C(K,\Z)\}\]
which is unital positive isomorphic to $K_0(C^*(K,\taf))$.
\begin{prop}\label{3.3}
Let $\af=\tilde{\af}\times c:X\to X$ be a minimal homeomorphism. If $K_i(C(\Omega))$ is torsion free and $c_x\in\Homeo(\Omega)$ induces the identity map on $K_i(C(\Omega))$ for all $x\in X, i=0,1$, then $K_i(A_{\{x\}})$ and $K_i(A_\af)$ are torsion free and 
\[K_0(A_\af)\cong (K^0(K,\taf)\otimes K_0(C(\Omega)))\oplus K_1(C(\Omega))\]
and
\[K_1(A_\af)\cong (K^0(K,\taf)\otimes K_1(C(\Omega)))\oplus K_0(C(\Omega)).\]
Besides, we have $K_i(A_\af)\cong K_i(A_{\{x\}})\oplus K_{i+1}(C(\Omega))$ for $i=0,1$, and the canonical embedding $j_x: A_{\{x\}}\hookrightarrow A_\af$ induces a unital positive embedding of $K_i(A_{\{x\}})$ into $K_i(A_\af)$, and an affine homeomorphism from $T(A_\af)$ onto $T(A_{\{x\}})$.
\end{prop}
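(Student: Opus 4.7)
The plan is to compute $K_*(A_\af)$ via the Pimsner--Voiculescu six-term sequence, to compute $K_*(A_{\{x\}})$ using its inductive limit description from Theorem \ref{3.1}, and then to read off the direct sum decomposition as well as the statement about traces. First, I would apply Pimsner--Voiculescu to $A_\af=C(X)\rtimes_\af\Z$. Since $K_0(C(K))=C(K,\Z)$ is free abelian and $K_1(C(K))=0$, the K\"unneth formula (with no $\operatorname{Tor}$ obstruction because $K_*(C(\Omega))$ is torsion-free) yields $K_i(C(X))=C(K,\Z)\otimes K_i(C(\Omega))$. The hypothesis that each $c_x$ acts trivially on $K_i(C(\Omega))$ forces $\af_*=\taf_*\otimes\id$. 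Writing $K_i(C(\Omega))\cong\Z^{n_i}$ (free abelian, since $\Omega$ is a finite CW-complex with torsion-free $K$-groups), the operator $\id-\af_*$ acts as $(\id-\taf_*)^{\oplus n_i}$; minimality of $\taf$ on the Cantor set implies that the only locally constant $\taf$-invariant $\Z$-valued functions on $K$ are the constants, so $\Ker(\id-\taf_*)=\Z$ and $\Coker(\id-\taf_*)=K^0(K,\taf)$. Consequently, the Pimsner--Voiculescu sequence yields
\[0\to K^0(K,\taf)\otimes K_i(C(\Omega))\to K_i(A_\af)\to K_{i+1}(C(\Omega))\to 0,\]
which splits because $K_{i+1}(C(\Omega))$ is free abelian. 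Torsion-freeness of $K_i(A_\af)$ then follows from torsion-freeness of $K^0(K,\taf)$, which is a simple dimension group by Giordano--Putnam--Skau.

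For $A_{\{x\}}$, I would use Theorem \ref{3.1} to realize it as $\varinjlim A_n$ with $A_n\cong B_n\otimes C(\Omega)$, where $B_n=\bigoplus_v M_{h_n(v)}\otimes C(X(n,v,h_n(v)))$ satisfies $K_1(B_n)=0$. K\"unneth then gives $K_i(A_n)=K_0(B_n)\otimes K_i(C(\Omega))$. The hypothesis that $c$ acts trivially on $K_*(C(\Omega))$ ensures that the connecting maps induce $\iota_{n*}\otimes\id$ on $K$-theory, where $\iota_n:B_n\hookrightarrow B_{n+1}$ is the Bratteli--Vershik refinement. Passing to the inductive limit and using the standard identity $\varinjlim K_0(B_n)=K^0(K,\taf)$ for Cantor minimal systems, I obtain $K_i(A_{\{x\}})=K^0(K,\taf)\otimes K_i(C(\Omega))$. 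Comparing with the first paragraph yields $K_i(A_\af)\cong K_i(A_{\{x\}})\oplus K_{i+1}(C(\Omega))$, with $j_{x*}$ realized as inclusion onto the first summand (the image of $K_i(C(X))$ in $K_i(A_\af)$ already lies in $j_{x*}(K_i(A_{\{x\}}))$, and both coincide with $\Coker(\id-\af_*)$). Unitality and positivity of the $K_0$-embedding are immediate from the unital inclusion of $C^*$-algebras.

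For the tracial statement, I would invoke the standard fact that $T(A_\af)$ is affinely homeomorphic to the space of $\af$-invariant Borel probability measures on $X$. Restriction to $A_{\{x\}}$ is injective because it determines the measure on $C(X)\subset A_{\{x\}}$, which determines the trace on $A_\af$. For surjectivity, given $\sigma\in T(A_{\{x\}})$, its restriction to $C(X)$ is a probability measure $\mu$ whose projection to $K$ is $\taf$-invariant, so $\mu(\{x\}\times\Omega)=0$ by minimality of $\taf$ (every singleton has measure zero for any $\taf$-invariant probability measure on a Cantor minimal system, since otherwise its orbit would have infinite total mass); this vanishing together with traciality of $\sigma$ on $A_{\{x\}}=C^*(C(X),u_\af C_0((K\setminus\{x\})\times\Omega))$ forces full $\af$-invariance of $\mu$, which then extends uniquely to a trace on $A_\af$ restricting to $\sigma$. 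The main obstacle I anticipate is the verification in the second paragraph that the inductive-limit connecting maps descend to $\iota_{n*}\otimes\id$ on $K$-theory despite a potentially nontrivial cocycle $c$; the triviality of each $c_x$ on $K_i(C(\Omega))$ is precisely the input needed to trivialize the $\Omega$-factor at the level of $K$-theory.
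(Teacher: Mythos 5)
Your proposal is correct and follows essentially the same route as the paper: the K\"unneth theorem plus the Pimsner--Voiculescu sequence for $K_*(A_\af)$, splitting of the resulting short exact sequence by freeness of $K_{i+1}(C(\Omega))$, the inductive-limit description of $A_{\{x\}}$ from Theorem \ref{3.1} for $K_*(A_{\{x\}})$, and identification of $j_{x*}$ with the inclusion of the cokernel summand. The only difference is that you work out directly (the limit computation $\varinjlim K_0(B_n)\otimes K_i(C(\Omega))$, torsion-freeness, and the trace bijection via invariant measures) the steps the paper delegates to citations of Proposition 3.3 of \cite{LM} and Corollary 1.12 of \cite{WS}; your treatment of these is sound, modulo the standard fact that traces on $A_{\{x\}}$ are determined by their restriction to $C(X)$.
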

\begin{proof}
Since $K_i(C(\Omega))$ is torsion free, we have, from the K$\ddot{\rm u}$nneth Theorem, that $K_i(C(X))\cong C(K,K_i(C(\Omega)))$ for $i=0,1$. Since $c_x={\rm id}$ for all $x\in X$, we know that $\af$ induces the action $\af_{*i}: f\mapsto f\circ\taf$ on $K_i(C(X))$. Then according to the Pimsner-Voiculescu six term exact sequence and the minimality of $\taf$, we have
\[0\longrightarrow K^0(K,\taf)\otimes K_i(C(\Omega))\longrightarrow K_i(A_\af)\longrightarrow K_{i+1}(C(\Omega))\longrightarrow 0\]
for $i=0,1$. On the other hand, since $\Omega$ is a finite CW-complex, $K_i(C(\Omega))\,(i=0,1)$ is finitely-generated, and hence the exact sequence splits, which follows the $K$-theory of $A_\af$. The $K$-theory of $A_{\{x\}}$ follows from a similar argument as in (2) of Proposition 3.3 in \cite{LM}, that is, we have 
\[K_i(A_{\{x\}})\cong K^0(K,\taf)\otimes K_i(C(\Omega)).\]
This shows that $K_i(A_\af)\cong K_i(A_{\{x\}})\oplus K_{i+1}(C(\Omega))$ for $i=0,1$ and the embedding from $K_i(A_{\{x\}})$ into $K_i(A_\af)$ which sending $x$ to $(x,0)$ is induced by the canonical embedding $j_x: A_{\{x\}}\hookrightarrow A_\af$. By checking the Corollary 1.12 of \cite{WS} carefully, one notice that $\Z^2$ can be replaced by any torsion free abelian group $G$, and hence $K_i(A_{\{x\}})\cong K^0(K,\taf)\otimes K_i(C(\Omega))$ is torsion free for $i=0,1$. Finally, according to (4) of Proposition 3.3 in \cite{LM}, $j_x: A_{\{x\}}\hookrightarrow A_\af$ induces an affine homeomorphism from $T(A_\af)$ onto $T(A_{\{x\}})$.
\end{proof}
\begin{df}\label{3.5}
Let $Y$ be a compact metrizable space and $\bt:Y\to Y$ be a minimal homeomorphism. We say that $(Y,\bt)$ is ($C^*$-)rigid, if the image of $K_0(A_\bt)$ under $\rho_{A_\bt}$ is uniformly dense in $\Aff(T(A_\bt))$. We also say $\bt$ is rigid, if $(Y,\bt)$ is a rigid minimal dynamical system.
\end{df}
Note that according to \cite{LP}, we have
\begin{prop}\label{3.6}
A minimal dynamical system $(Y,\bt)$ is rigid if and only if 
\[TR(A_\bt)=0.\]
\end{prop}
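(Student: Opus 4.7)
The plan is to split the equivalence into its two implications and use different tools for each.

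For the direction $TR(A_\beta)=0\Rightarrow$ rigid, I would use only general $C^*$-algebraic facts. By the definition of tracial rank zero, $A_\beta$ has real rank zero and stable rank one. In a unital simple $C^*$-algebra with $RR=0$, every positive element is a norm limit of positive linear combinations $\sum_i\lambda_i p_i$ of mutually orthogonal projections, so every nonnegative $f\in\Aff(T(A_\beta))$ can be uniformly approximated by combinations of $\widehat p=\rho_{A_\beta}([p])$ for $p\in\Pj_\infty(A_\beta)$. A rational-approximation argument (using that real positive scalar multiples of $\widehat p$ are uniform limits of integer multiples after rescaling in $K_0(A_\beta)$, together with the Riesz interpolation guaranteed by $TR=0$) then yields density of $\rho_{A_\beta}(K_0(A_\beta))_+$ in $\Aff(T(A_\beta))_+$, and taking differences gives density in all of $\Aff(T(A_\beta))$, which is rigidity.

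For the converse, rigid $\Rightarrow TR(A_\beta)=0$, I would invoke the main theorem of \cite{LP}. The mechanism is: given a finite $\mathcal{F}\subset A_\beta$, an $\varepsilon>0$, and a nonzero $a\in (A_\beta)_+$, one must construct a projection $p\in A_\beta$ and a finite-dimensional $B\subset pA_\beta p$ with $1_B=p$ verifying the three conditions in the definition of tracial rank zero. The strategy, in the minimal setting, uses orbit-breaking subalgebras $A_{\{y\}}\subset A_\beta$ (simple AH-algebras of tracial rank at most one, cf.\ Theorem \ref{3.1} and its analogues for general finite-dimensional $Y$) together with Rohlin towers adapted to $\beta$: one replaces the implementing unitary $u_\beta$ by a Berg-technique commutator-killing perturbation supported away from a clopen cross-section, and the density of $\rho_{A_\beta}(K_0(A_\beta))$ in $\Aff(T(A_\beta))$ supplies projections of prescribed trace needed to complete the finite-dimensional approximation of $\mathcal{F}$ and to arrange the comparison $[1-p]\le[a]$.

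The main obstacle is this tracial-approximation step in the converse: converting the abstract density of the image of $\rho_{A_\beta}$ into a concrete $\mathcal{F}$-approximately-commuting finite-dimensional subalgebra of $pA_\beta p$ with the required Cuntz comparison. This construction is the technical heart of \cite{LP}, so the proof here amounts to citing that theorem rather than reproducing the Rohlin/Berg argument.
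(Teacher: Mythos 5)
Your proposal matches the paper, which proves Proposition \ref{3.6} simply by citing the Lin--Phillips theorem from \cite{LP}: the implication from rigidity to $TR(A_\bt)=0$ is exactly their main result, and the converse is the standard fact that a unital simple $C^*$-algebra with tracial rank zero has $\rho$-dense $K_0$ in $\Aff(T(\cdot))$, just as you sketch. The only caveat worth recording is that the Lin--Phillips theorem requires $Y$ to have finite covering dimension, a hypothesis implicit in the paper's standing assumptions on $K\times\Omega$ but not stated in the proposition itself.
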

\begin{df}[Definition 3.6, \cite{LM}]
A unital simple $C^*$-algebra $A$ is called an almost tracially $AF$-algebra, if there exists a unital simple subalgebra $B\subset A$ with $TR(B)=0$ such that the following holds: for any finite subset $\F\subset A$ and any $\ep>0$, there exists a projection $p\in B$ such that

(1) For every $a\in\F$, there exists $b\in B$ with $\|ap-b\|<\ep$;

(2) $\tau(1-p)<\ep$ for every $\tau\in T(B)$.
\end{df}
\begin{thm}\label{3.7}
$RR(A_{\{x\}})=0$ for some $x\in K$ implies the rigidity of $\af$.
\end{thm}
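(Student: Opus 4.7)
The plan is to prove $TR(A_\af) = 0$, which by Proposition \ref{3.6} is equivalent to the rigidity of $\af$. The argument splits into two stages: first, upgrade the hypothesis $RR(A_{\{x\}}) = 0$ to $TR(A_{\{x\}}) = 0$; second, transfer tracial rank zero from the orbit-breaking subalgebra $A_{\{x\}}$ to the crossed product $A_\af$ by verifying that $A_\af$ is almost tracially $AF$ with witness subalgebra $A_{\{x\}}$, in the sense of the definition stated just before the theorem.

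For the first stage, Theorem \ref{3.1} shows that $A_{\{x\}}$ is a unital simple AH-algebra with no dimension growth, and hence automatically has stable rank one. For this class of algebras, Lin's classification theory establishes that real rank zero forces tracial rank zero: the cancellation and Riesz interpolation on $K_0$ supplied by $RR = 0$ give exactly the approximate divisibility needed to carry out the finite-dimensional tracial approximation. I would invoke this as a black box to conclude $TR(A_{\{x\}}) = 0$, which in particular implies that $\rho_{A_{\{x\}}}(K_0(A_{\{x\}}))$ is uniformly dense in $\Aff(T(A_{\{x\}}))$.

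For the second stage, I would use the Kakutani-Rohlin partitions $\Pj_n$ appearing in the proof of Theorem \ref{3.1}, whose roof sets $\mathcal{R}(\Pj_n)$ shrink to $\{x\}$. Given a finite set $\F\sbs A_\af$ and $\ep>0$, set $p_n = 1 - 1_{\mathcal{R}(\Pj_n)\times\Omega}\in A_{\{x\}}$. For $n$ sufficiently large, $p_n$ approximately commutes with $\F$; each $p_nap_n$ ($a\in\F$) is $\ep$-close to an element of $A_{\{x\}}$ by the very construction of the approximating subalgebras $A_n$ in Theorem \ref{3.1}; and the minimality of $\taf$ forces $\tau(1-p_n) < \ep$ uniformly over $\tau\in T(A_\af)$. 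This exhibits $A_\af$ as almost tracially $AF$ with witness subalgebra $A_{\{x\}}$, and the standard tracial absorption argument (cf.\ \cite{LM}) then yields $TR(A_\af) = 0$, whence the rigidity of $\af$ by Proposition \ref{3.6}.

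The main obstacle is the first stage, namely the classification-theoretic input promoting $RR(A_{\{x\}}) = 0$ to $TR(A_{\{x\}}) = 0$ without any a priori $K$-theoretic hypothesis on $\Omega$; the second stage is essentially formal once the Kakutani-Rohlin geometry of the inclusion $A_{\{x\}}\sbs A_\af$ is in place.
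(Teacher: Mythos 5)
Your first stage matches the paper exactly (Theorem \ref{3.1} plus Proposition 2.6 of \cite{L2} give $TR(A_{\{x\}})=0$), and the idea of exhibiting $A_\af$ as almost tracially AF with witness $A_{\{x\}}$ is also the paper's. But your final step contains a genuine gap: you assert that the almost-tracially-AF structure plus a ``standard tracial absorption argument'' yields $TR(A_\af)=0$. The definition of $TR(A_\af)=0$ requires, for an arbitrarily prescribed nonzero $a\in (A_\af)_+$, a projection $p$ with $[1-p]\le[a]$; the almost-tracially-AF data only give $\tau(1-p)<\ep$ for all traces, and converting trace-smallness of $1-p$ into Murray--von Neumann subequivalence needs a comparison property of $A_\af$ that you have not established. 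This is precisely why the paper does \emph{not} conclude $TR(A_\af)=0$ at this point: it extracts only $RR(A_\af)=0$ from the almost tracially AF property (Theorem 3.10 of \cite{LM}), then imports two further nontrivial external inputs --- stable rank one of $A_\af$ from \cite{LN} and weak unperforation of $K_0(A_\af)$ from \cite{P} --- to deduce that $\rho_{A_\af}(K_0(A_\af))$ is dense in $\Aff(T(A_\af))$, which is the definition of rigidity (Definition \ref{3.5}). So the second stage is not ``essentially formal''; without comparison your argument does not close.

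There is also a technical flaw in your choice of projection. With $p_n=1-1_{\mathcal{R}(\Pj_n)\times\Omega}$, the compression $p_n u_\af^k p_n$ for $|k|\ge 2$ need not lie in (or near) $A_{\{x\}}$: writing $p_nu_\af^kp_n=u_\af^k 1_{W\times\Omega}$ with $W=\taf^{-k}(K\setminus\mathcal{R}(\Pj_n))\cap(K\setminus\mathcal{R}(\Pj_n))$, membership in $A_{\{x\}}$ requires $W$ to avoid $x,\taf^{-1}(x),\dots,\taf^{-(k-1)}(x)$, whereas removing only the single roof level excludes $x$ and $\taf^{-k}(x)$ but not the intermediate orbit points. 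The paper instead removes $V=\bigcup_{k=1-N}^{N}\af^k(U)$ for a small clopen $U\ni x$ with disjoint translates and $\mu(U)<\ep/2N$, so that $u_\af^kp\in A_{\{x\}}$ for all $|k|\le N$ and hence $ap\in A_{\{x\}}$ exactly (note the definition of almost tracially AF asks about $ap$, not $pap$, and does not require $p$ to approximately commute with $\F$). This part is fixable, but as written your witness projection does not do the job.
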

\begin{proof}
Let $x\in K$ with $RR(A_{\{x\}})=0$. By Theorem \ref{3.1}, $A_{\{x\}}$ is a simple unital AH-algebra with no dimension growth. It follows from the Proposition 2.6 of \cite{L2} that $TR(A_{\{x\}})=0$. We now show that $A_\af$ is an almost tracially AF-algebra. In fact, let $\ep>0$ and 
\[\F\subset\left\{\sum_{k=-N}^Nf_ku_\af^k: f_k\in C(X)\right\}=E_N\]
 be a finite subset. Since $\af$ is minimal, we can find a clopen set $U\ni x$ such that $\af^{1-N}(U), \af^{2-N}(U), \cdots, U, \cdots, \af^N(U)$ are mutually disjoint and $\mu(U)<\ep/2N$ for all Borel probability measure $\mu$ on $X$. Let $V=\bigcup_{k=1-N}^N\af^k(U)$ and $p=1_{(K\setminus V)\times\Omega}$. It is clear that $u_\af^kp, u_\af^{1-k}p\in A_{\{x\}}$ for $k=0,1,\cdots,N$ and hence $ap\in A_{\{x\}}$ for every $a\in E_N$. Besides, $\tau(1-p)<\ep$ for every $\tau\in T(A_\af)=T(A_{\{x\}})$ as is shown in Proposition \ref{3.3}. This verifies that $A_\af$ is an (simple) almost tracially AF-algebra, and therefore $RR(A_\af)=0$ by Theorem 3.10 in \cite{LM}.

On the other hand, it follows from \cite{LN} that $A_\af$ has stable rank one. Consequently, $A_\af$ is stably finite and the projections in $A_\af\otimes\mathcal{K}$ satisfy cancellation. Furthermore, $K_0(A_\af)$ is unperforated for the strict order, according to Theorem 4.5 of \cite{P}. This implies that the image of $\rho_{A_\af}: K_0(A_\af)\to \Aff(QT(A_\af))$ is uniformly dense. Since quasitraces on nuclear $C^*$-algebras exactly are traces, $QT(A_\af)=T(A_\af)$ and hence $\af$ is rigid, which completes the proof.
\end{proof}
\begin{rem}\label{3.8}
1. According to Theorem 10.6 of \cite{Mun} and Chapter IV, 12 of \cite{Whi}, if $\Omega$ is a smooth manifold, then $\Omega$ admits a simplicial structure, and hence is a finite CW-complex. Additionally, Theorem 4.2 of \cite{FNOP} implies that, for a finite-dimensional connected compact manifold $M$, if ${\rm dim}\,M\leq 3$ or if ${\rm dim}\,M\leq 5$ and $M$ is closed, then $M$ also admits the structure of a finite CW-complex.

2. In \cite{LM}, it is shown that when $\Omega=\T$, then there is an equivalent between the following four conditions:

(1) The minimal homeomorphism $\af$ is rigid;

(2) The canonical projection $\pi: K\times\T\to K$ induces an affine homeomorphism from $M_\af(X)$ onto $M_{\taf}(K)$, the spaces of invariant probability measures;

(3) $RR(A_{\{x\}})=0$;

(4) $RR(A_\af)=0$.

In particular, the implication $(2)\Rightarrow (3)$ follows from the fact that, with the $K$-theory of $A_{\{x\}}$, there is an equivalence between the conditions that $\pi$ induces an affine homeomorphisms of measure spaces, and that the projections in $A_{\{x\}}$ separate traces. Consequently, the condition that ${\rm dim}(K\times\T)=1$ implies $RR(A_{\{x\}})=0$, by invoking Theorem 1.3 of \cite{BBEK}. However, for a finite CW-complex, we only have the following implications, according to Theorem \ref{3.7}:
\[RR(A_{\{x\}})=0\Rightarrow (X,\af)\ {\rm is\ rigid}\Leftrightarrow RR(A_\af)=0.\]
\end{rem}

\vspace{0.3cm}

\section{Lipschitz-minimal-property and minimal skew products}\label{sec:4}
In this section, we keep the assumption that $K$ is the Cantor space and $\Omega$ admits the structure of an (infinite) finite-dimensional connected finite CW-complex. Denote $X=K\times\Omega$. Start with a Cantor minimal system $(K,\taf)$, we apply an argument similar to the proof of Theorem 1 in \cite{GW} and construct a class $\mathcal{R}_0(\taf)$ of minimal skew products on $X$, whose elements will be classified up to approximate $K$-conjugacy later in Section 6.
\begin{df}\label{4.1}
Let $Z$ be a compact metric space and $M\subset \Homeo(Z)$ be a closed subset. We shall say $M$ acts minimally on $Z$, if for every nonempty open set $O\subset Z$, there is a finite set $\{h_1,h_2,\cdots,h_n\}\subset M$ such that $\bigcup_{1\leq i\leq n}h_i(O)=Z$.
\end{df}
Recall that for a compact metric space $Z$, a homeomorphism $f\in\Homeo(Z)$ and $\Lip>0$, we say $f$ is $\Lip$-Lipschitz, if $d(f(z_1),f(z_2))\leq\Lip d(z_1,z_2)$ for all $z_1, z_2\in Z$. 
\begin{df}\label{4.2}
For a compact metric space $Z$ and $\Lip>0$, we denote the set of $\Lip$-Lipschitz homeomorphisms in the identity path component of $\Homeo(Z)$ by $\G_{\Lip}(Z)$. We shall say $Z$ satisfies the {\em Lipschitz-minimal-property}, if there exists  $\Lip>0$ and a pathwise connected set $M\subset\G_{\Lip}(Z)$ which acts minimally on $\Omega$. 
\end{df}
From now on, for a compact metric space which satisfies the Lipschitz-minimal-property, we will simply say $Z$ satisfies the LMP. Note that from Definition \ref{4.2}, it is straightforward to verify the following proposition.
\begin{prop}\label{4.3}
If $Z_1$ and $Z_2$ satisfies the LMP, then $Z_1\times Z_2$ satisfies the LMP.
\end{prop}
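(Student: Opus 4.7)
The plan is to lift the witnessing data from each factor to the product in the most natural way. Suppose $M_i \subset \mathcal{G}_{L_i}(Z_i)$ is a pathwise connected set that acts minimally on $Z_i$ (for $i=1,2$), and equip $Z_1\times Z_2$ with the max metric $d((z_1,z_2),(z_1',z_2'))=\max(d_1(z_1,z_1'),d_2(z_2,z_2'))$. I would set
\[
M := \{\,f_1\times f_2 : f_1\in M_1,\ f_2\in M_2\,\}\subset \Homeo(Z_1\times Z_2),
\]
and take $L:=\max(L_1,L_2)$. A direct estimate shows $f_1\times f_2$ is $L$-Lipschitz with respect to the max metric, so the Lipschitz condition of Definition \ref{4.2} is met.

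Next I would check the topological conditions. The product map $\Psi:\Homeo(Z_1)\times\Homeo(Z_2)\to\Homeo(Z_1\times Z_2)$, $(f_1,f_2)\mapsto f_1\times f_2$, is continuous with respect to the metric on $\Homeo(\cdot)$ defined in the preliminaries (since both the forward and inverse suprema split as a max over the two factors). Hence $M=\Psi(M_1\times M_2)$ is the continuous image of a pathwise connected set, so it is pathwise connected. For membership in the identity path component of $\Homeo(Z_1\times Z_2)$: given paths $\gamma_i:[0,1]\to\Homeo(Z_i)$ with $\gamma_i(0)=\id_{Z_i}$ and $\gamma_i(1)=f_i$ (which exist since $f_i\in\mathcal{G}_{L_i}(Z_i)$), the path $t\mapsto\gamma_1(t)\times\gamma_2(t)=\Psi(\gamma_1(t),\gamma_2(t))$ connects $\id_{Z_1\times Z_2}$ to $f_1\times f_2$. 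So $M\subset\mathcal{G}_L(Z_1\times Z_2)$.

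Finally, to verify minimality in the sense of Definition \ref{4.1}, let $O\subset Z_1\times Z_2$ be nonempty open and choose a basic rectangle $O_1\times O_2\subset O$ with each $O_i\subset Z_i$ nonempty and open. By the minimal action of $M_i$ on $Z_i$ there exist finite families $h_i^{(1)},\ldots,h_i^{(n_i)}\in M_i$ with $\bigcup_{k} h_i^{(k)}(O_i)=Z_i$. Then $\{h_1^{(j)}\times h_2^{(k)}:1\le j\le n_1,\ 1\le k\le n_2\}$ is a finite subset of $M$ and
\[
\bigcup_{j,k}(h_1^{(j)}\times h_2^{(k)})(O)\supseteq \bigcup_{j,k}(h_1^{(j)}\times h_2^{(k)})(O_1\times O_2)=Z_1\times Z_2,
\]
as desired. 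I do not anticipate a serious obstacle; the only point that requires minor care is making sure $f_1\times f_2$ lies in the identity path component of $\Homeo(Z_1\times Z_2)$, which is handled by the product of paths $\gamma_1(t)\times\gamma_2(t)$.
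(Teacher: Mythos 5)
Your argument is correct and is precisely the ``straightforward verification'' that the paper omits (Proposition \ref{4.3} is stated without proof): product of witnessing families, $\Lip=\max(\Lip_1,\Lip_2)$ for the max metric, path-connectedness via the product of paths, and minimality via rectangles inside open sets. The only cosmetic caveat is that the Lipschitz constant depends on the choice of product metric, so one should fix (as you do) a specific product metric on $Z_1\times Z_2$; with that convention every step checks out.
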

\begin{rem}\label{4.4}
It is clear that for all $n\geq1$, the $n$-torus $\T^n$ and the $n$-dimensional sphere $\Sph^n$ satisfy the LMP. In fact, for an $n$-torus, the set of products of rotations on $\T$ and the set of standard rotations on $\Sph^n$ both act freely (on $\T^n$ and $\Sph^n$). Then Proposition \ref{4.3} follows that any finite products of $\T^n\,(n\geq1)$ and $\Sph^m\,(m\geq1)$ satisfies the LMP.
\end{rem}
If $\Omega$ satisfies the LMP, we use $G_{\Lip}(\Omega)$ to denote the closed subgroup of $\Homeo(\Omega)$ generated by $\G_{\Lip}(\Omega)$. Note that in the proof of Theorem 1 in \cite{GW}, it is not necessary to assume that $\G$ is a subgroup which makes $(Y,\G)$ a minimal flow. Actually, it suffices to assume that $\G$ is a connected set and acts minimally on $Y$. Therefore, one could regard the next theorem as a corollary of Theorem 1 in \cite{GW}, which, however, plays a crucial role in this paper and we decide to state the (adapted) proof for the reader's convenience.
\begin{thm}\label{4.5}
Let $\Omega$ satisfy the LMP. For every Cantor minimal system $(K,\taf)$, there exists $\Lip>0$ and uncountably many continuous maps $c\in C(K,G_\Lip)$ such that $(X,\taf\times c)$ is a minimal dynamical system.
\end{thm}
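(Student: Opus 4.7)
The plan is to adapt the Glasner--Weiss construction of skew minimal products \cite{GW} to the present Lipschitz-controlled setting. Let $M \subset \G_\Lip(\Omega)$ be the pathwise connected, minimally acting subset supplied by the LMP; the coboundary $c$ will be produced as a uniform limit of a Cauchy sequence $\{c_n\} \subset C(K, G_\Lip(\Omega))$, built inductively against a refining sequence of Kakutani--Rohlin partitions $\{\Pj_n\}$ of $(K,\taf)$ whose atoms generate the topology of $K$, whose heights $h_n(v)$ grow rapidly, and whose roof sets shrink to a single point. Fix also a countable base $\{W_m\}_{m\geq1}$ of open sets for $\Omega$ and a countable dense sequence $\{g_k\}_{k\geq1}$ in $M$; by Definition~\ref{4.1} applied to $M$, for each $m$ there is a finite subfamily $\{g_{k_j^m}\}_{j\leq r(m)}$ with $\bigcup_{j} g_{k_j^m}(W_m) = \Omega$.

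For the inductive step, start with $c_0 \equiv \id_\Omega$. At stage $n$, set $c_n = c_{n-1}$ outside a clopen neighborhood $V_n$ of $\mathcal{R}(\Pj_n)$ chosen small enough that $\sup_{x\in K} d(c_n(x), c_{n-1}(x)) < 2^{-n}$. On the top few levels of each tower of $\Pj_n$, use the pathwise connectedness of $M$ inside $\G_\Lip(\Omega)$ to interpolate $c_n$ along continuous paths so that the tower cocycle
\[c_n^{(h_n(v))}(y_{n,v}) \;:=\; c_n(\taf^{h_n(v)-1}y_{n,v})\circ \cdots \circ c_n(y_{n,v})\]
at each prescribed base point $y_{n,v} \in X(n,v,1)$ realizes a prescribed element of $\{g_{k_j^m}: j\leq r(m),\ m\leq n\}$. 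Since the towers are clopen and the interpolation is continuous, each $c_n$ lies in $C(K, G_\Lip(\Omega))$, and the Cauchy property yields the limit $c \in C(K, G_\Lip(\Omega))$.

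Minimality of $\af := \taf\times c$ is then verified pointwise. Given $(x,\omega) \in X$ and a basic open box $U_\ell \times W_m$, pick $j$ with $\omega \in g_{k_j^m}(W_m)$. Minimality of $(K,\taf)$ together with the construction forces iterates $n$ at which $\taf^n(x) \in U_\ell$ and the partial cocycle $c^{(n)}(x)$ lies arbitrarily close to $g_{k_j^m}^{-1}$; the uniform Lipschitz bound on the interpolating paths in $\G_\Lip(\Omega)$ controls the compounded error across the finitely many tower compositions involved. Hence $\af^n(x,\omega) \in U_\ell \times W_m$, giving minimality.

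Uncountability follows from a Baire category argument. Equipped with the sup metric, $C(K, G_\Lip(\Omega))$ is a complete metric space. For each pair $(\ell,m)$ the set
\[\mathcal{O}_{\ell,m} \;=\; \bigl\{c \in C(K, G_\Lip(\Omega)):\ \forall (x,\omega)\in X,\ \exists n\in\Z,\ (\taf\times c)^n(x,\omega) \in U_\ell\times W_m\bigr\}\]
is open, and its density follows by running the construction above starting from an arbitrary initial $c' \in C(K, G_\Lip(\Omega))$ and perturbing only on shrinking clopen neighborhoods of the roof sets. The intersection $\bigcap_{\ell,m} \mathcal{O}_{\ell,m}$ is therefore a dense $G_\delta$, hence uncountable, and consists precisely of the coboundaries $c$ for which $\taf\times c$ is minimal. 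The main obstacle is stage $n$ of the inductive construction, where one must simultaneously preserve the Cauchy property, keep $c_n$ continuous into $G_\Lip(\Omega)$ through paths contained in $\G_\Lip(\Omega)$, and prescribe all required tower-cocycle values — this is exactly where the LMP hypothesis is used in an essential way.
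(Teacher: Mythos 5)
Your overall strategy (Baire category in a complete space of cocycles, plus a Glasner--Weiss-style perturbation to prove density) is the right one, but the two steps you delegate to ``the construction'' are exactly the steps that carry the content of the theorem, and as described they do not go through. First, prescribing the value of the full tower cocycle $c_n^{(h_n(v))}$ at a single base point $y_{n,v}$ of each tower does not control the partial cocycles $c^{(n)}(x)$ at arbitrary points $x$ and arbitrary return times $n$, which is what your pointwise verification of minimality requires: a single $c$ must work for every $(x,\omega)$ and every box $U_\ell\times W_m$ simultaneously, and nothing in the inductive scheme forces the intermediate compositions (partial pieces of towers from several stages) to approximate the prescribed elements $g_{k_j^m}^{-1}$. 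The paper sidesteps this by working not with arbitrary cocycles but with the closure $\overline{\mathcal{D}_0(\taf)}$ of conjugates $G^{-1}\circ(\taf\times\id)\circ G$, $G=\id\times g$ with $g\in C(K,M)$: for these the cocycle telescopes, $c^{(n)}_x\approx g^{-1}_{\taf^n(x)}\circ g_x$, and the identity $GE_OG^{-1}=E_{G(O)}$ reduces density of $E_O$ to perturbing the single homeomorphism $\taf\times\id$. Relatedly, your appeal to the ``uniform Lipschitz bound'' to ``control the compounded error across the \dots tower compositions'' is incorrect: a composition of $k$ maps each of which is $\Lip$-Lipschitz is only $\Lip^k$-Lipschitz, so errors compound geometrically while the tower heights are unbounded. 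In the paper the Lipschitz constant plays no quantitative role in Theorem \ref{4.5}; it only guarantees that the limiting cocycles take values in $G_{\Lip}$, and it is used once, with a single factor of $\Lip$, in Lemma \ref{4.8}.

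Second, your Baire argument is run in all of $C(K,G_{\Lip}(\Omega))$, so you must prove each $\mathcal{O}_{\ell,m}$ is dense in that entire space; this is strictly stronger than what the paper proves (density of $E_O$ inside $\overline{\mathcal{D}_0(\taf)}$ only), and ``perturbing $c'$ on shrinking clopen neighborhoods of the roof sets'' does not obviously achieve it. For a general initial $c'$, the unmodified part of the cocycle between visits to the roof set is an uncontrolled homeomorphism of $\Omega$ depending on $(x,n)$, and the correction you insert near the roof would have to be chosen as a function of that uncontrolled element, uniformly over all $(x,\omega)$ --- precisely the difficulty that the conjugation trick removes. Finally, ``dense $G_\delta$, hence uncountable'' requires the ambient complete space to have no isolated points; this is easy here but must be said (the paper records it for $\overline{\mathcal{D}_0(\taf)}$). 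The repair is to replace the ambient space by $\overline{\mathcal{D}_0(\taf)}$ and to carry out the single-perturbation step of Glasner--Weiss --- the surjection $\theta:K\to[0,1]$ built from a Cantor set in the base of one long, thin tower, moving slowly along a path in $M$ --- rather than an infinite inductive prescription of tower cocycles.
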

\begin{proof}
According to Definition \ref{4.2}, let $M\subset\G_{\Lip}$ be a pathwise connected set which acts minimally on $\Omega$ for some $\Lip>0$. For every $g\in C(K,M)$, use $\id\times g$ to denote the homeomorphism $(x,\omega)\mapsto (x,g_x(\omega))$. Define
\[\mathcal{D}_0(\taf)=\{G^{-1}\circ\taf\circ G: G=\id\times g\ {\rm for\ some\ }g\in C(K,M)\}\]
where we identify $\taf$ with $\taf\times\id$. Let
\[\Raf=\{\taf\times c\in \overline{\mathcal{D}_0(\taf)}: \taf\times c\ {\rm is\ minimal}\}\]
We will show that $\overline{\Raf}=\overline{\mathcal{D}_0(\taf)}$. Let $O$ be a nonempty open set of $X$. Denote
\[E_O=\left\{h\in\overline{\mathcal{D}_0(\taf)}:\bigcup_{i=0}^\infty h^i(O)=X\right\}\]
The compactness of $X$ implies that $E_O$ is open in $\overline{\mathcal{D}_0(\taf)}$ and it is clear the $\bigcap_{i}E_{O_i}$ is precisely the set of minimal homeomorphisms in $\overline{\mathcal{D}_0(\taf)}$, if $O_i$ is a countable basis of $X$. Since $\overline{\mathcal{D}_0(\taf)}$ is complete, according to the Baire's category theorem, it is equivalent to show that $E_O$ is dense in $\overline{\mathcal{D}_0(\taf)}$. It is straightforward to check that for $G=\id\times g$ with $g\in C(K,M)$, we have $GE_OG^{-1}=E_{G(O)}$, which follows that $G\overline{E_O}G^{-1}=\overline{E_{G(O)}}$.  Note that to prove $E_O$ is dense in $\overline{\mathcal{D}_0(\taf)}$, it is equivalent to prove that $G^{-1}\circ\taf\circ G\in \overline{E_O}$ for all $G=\id\times g$ with $g\in C(K,M)$, or in other words, $\taf\in\overline{E_{G^{-1}(O)}}$. Therefore, it suffices to show that $\taf\in \overline{E_O}$ for all open set $U$, that is, for every $\ep>0$, there is $G=\id\times g$ with $g\in C(K,M)$ such that
\[d(\taf, G^{-1}\circ\taf\circ G)<\ep\ \ {\rm and}\ \ G^{-1}\circ\taf\circ G\in E_O\]
Now fix $\ep>0$. Let $W\subset K$ and $V\subset \Omega$ be nonempty open sets such that $W\times V\subset O$. Since $M$ acts minimally on $\Omega$, there is a finite set $\{h_0,h_{1/n},\cdots,h_{1-1/n}\}\subset M$ with $\bigcup_{0\leq i\leq n-1} h_{i/n}(V)=\Omega$. Note that $M$ is pathwise connected, which follows that there is a continuous extension $\tilde{h}$ from $I=[0,1]$ to $M$ with
\[\tilde{h}_{i/n}=h_{i/n}\ (i=0,1,\cdots,n-1)\]
By abuse of notation, we still denote the extension by $h$. Choose $\delta>0$ such that $d(h_{t_1}^{-1}\circ h_{t_2}, \id)<\ep$ whenever $|t_1-t_2|<\delta$. Note that since $h$ takes values in $M\subset \G_{\Lip}$, $h_{t_1}^{-1}\circ h_{t_2}\in G_{\Lip}$. Take $m\geq1$ with $2/m<\delta$ and choose an open set $A\subset W$ such that 
\[A,\taf(A),\cdots,\taf^{m-1}(A)\]
are pairwise disjoint. Note that this is feasible since $\taf$ is minimal. Let $J\subset A$ be a Cantor set and take a continuous surjection $\tilde{\theta}:J\to [0,1]$. For $x\in\taf^i(J)$, define
\[\tilde{\theta}(x)=\tilde{\theta}(\taf^{-i}(x))\]
which makes $\tilde{\theta}$ a continuous map of $\bigcup_{0\leq i\leq m-1}\taf^i(J)\subset K$ onto $[0,1]$. Now we extend $\tilde{\theta}$ to a continuous map $\tilde{\theta}:K\to[0,1]$. Finally, define $\theta:K\to [0,1]$ by
\[\theta(x)=\frac{1}{m}\sum_{i=0}^{m-1}\tilde{\theta}(\taf^i(x))\]
It is immediate that $\theta$ is an onto map and $\theta|_J=\tilde{\theta}|_J$. Now we define $g:K\to M$ by $g_x=h_{\theta(x)}$ and put $G(x,\omega)=(x,g_x(\omega))$. The verification that the homeomorphism $G$ satisfies $d(\taf, G^{-1}\circ\taf\circ G)<\ep$ and $G^{-1}\circ\taf\circ G\in E_O$ is exactly the same as the final part of Theorem 1 in \cite{GW} and we omit this part.

Finally, note that the argument above shows that $\Raf$ is a countable intersection of dense open sets in a complete metric space, and hence it is a dense $G_\delta$ set and a residual set. It is easy to see that $\overline{\mathcal{D}_0(\taf)}$ has no isolated point, and therefore $\Raf$ has no isolated point too, which follows that $\Raf$ is an uncountable set by the Baire's category theorem.  This completes the proof.
\end{proof}
\begin{df}
Throughout this paper, for a Cantor minimal system $(K,\taf)$ and an infinite finite-dimensional connected finite CW-complex $\Omega$ which satisfies the LMP, we use $\Raf$ to denote the class of minimal homeomorphisms in Theorem \ref{4.5}.
\end{df}
\begin{rem}\label{4.7}
According to Theorem \ref{4.5}, we know that for every $\af=\taf\times c\in\Raf$ and every $\ep>0$, there exists $\Lip>0$ and $g\in C(K,\G_{\Lip})$ such that
\[d(c_x(\omega),g_{\taf(x)}^{-1}\circ g_x(\omega))<\ep\]
for all $x\in K$ and $\omega\in\Omega$. In particular, since $c\in C(K,G_{\Lip})$ and $G_{\Lip}$ is contained in the identity component of $\Homeo(\Omega)$, then $c_x$ induces the identity map on $K_i(C(\Omega))$ for all $x\in K$ and $i=0,1$.
\end{rem}
\begin{lem}\label{4.8}
Let $\Omega$ satisfie the LMP and $(K,\taf)$ be a Cantor minimal system. Then there exists $\Lip>0$ such that for every $\af=\taf\times c\in\Raf$ and $\ep>0$, there exists $h\in C(K,G_{\Lip})$ such that
\[d(h_{\taf(x)}^{-1}\circ c_x\circ h_x(\omega),\omega)<\ep\]
for all $x\in K$ and $\omega\in\Omega$.
\end{lem}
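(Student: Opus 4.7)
The plan is to read this as the \emph{inverse} version of Remark~\ref{4.7}: that remark tells us that $c_x$ can be approximated by a coboundary-type map $g_{\tilde\alpha(x)}^{-1}\circ g_x$, and the lemma asks for an $h$ that conjugates $c_x$ close to the identity. The natural candidate is therefore $h_x = g_x^{-1}$, since formally $g_{\tilde\alpha(x)}\circ(g_{\tilde\alpha(x)}^{-1}\circ g_x)\circ g_x^{-1} = \mathrm{id}$; the remaining work is to propagate the approximation across the composition using a uniform Lipschitz bound.

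More concretely, first I would fix the constant $L>0$ coming from the LMP of $\Omega$, as used in the proof of Theorem~\ref{4.5}; this $L$ depends only on $\Omega$, not on the choice of $\alpha\in\Raf$. Given $\alpha=\tilde\alpha\times c\in\Raf$ and $\varepsilon>0$, apply Remark~\ref{4.7} with error $\varepsilon/L$ to produce $g\in C(K,\mathcal{G}_L)$ with
\[
d\bigl(c_x(\omega),\, g_{\tilde\alpha(x)}^{-1}\circ g_x(\omega)\bigr) < \varepsilon/L
\qquad\text{for all } x\in K,\ \omega\in\Omega.
\]
Define $h_x := g_x^{-1}$. Since $g_x\in\mathcal{G}_L\subset G_L$ and $G_L$ is a group, $h_x\in G_L$; continuity of $h$ on $K$ follows from continuity of $g$ and of inversion in $\mathrm{Homeo}(\Omega)$, so $h\in C(K,G_L)$ as required.

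It then remains to estimate
\[
h_{\tilde\alpha(x)}^{-1}\circ c_x\circ h_x(\omega) \;=\; g_{\tilde\alpha(x)}\circ c_x\circ g_x^{-1}(\omega).
\]
Writing $\omega' = g_x^{-1}(\omega)$, so that $\omega = g_x(\omega') = g_{\tilde\alpha(x)}\circ g_{\tilde\alpha(x)}^{-1}\circ g_x(\omega')$, and using that $g_{\tilde\alpha(x)}\in\mathcal{G}_L$ is $L$-Lipschitz, one obtains
\[
d\bigl(g_{\tilde\alpha(x)}\circ c_x(\omega'),\, \omega\bigr)
\;\le\; L\cdot d\bigl(c_x(\omega'),\, g_{\tilde\alpha(x)}^{-1}\circ g_x(\omega')\bigr)
\;<\; L\cdot \varepsilon/L \;=\; \varepsilon,
\]
uniformly in $x$ and $\omega$. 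The main (and really only) delicate point is the uniformity of $L$: one must check that the $L$ provided by Remark~\ref{4.7} is in fact the same constant used in the definition of $\Raf$ via Theorem~\ref{4.5}, i.e.\ that the pathwise connected set $M\subset\mathcal{G}_L$ chosen once and for all from the LMP of $\Omega$ controls every $\alpha\in\Raf$. Granted that, the computation above closes the argument.
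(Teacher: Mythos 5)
Your proposal is correct and is essentially identical to the paper's own proof: both take $h_x=g_x^{-1}$ for the $g\in C(K,\mathcal{G}_{\Lip})$ supplied by Remark~\ref{4.7} with error $\ep/\Lip$, substitute $\omega\mapsto g_x^{-1}(\omega)$, and use that $g_{\taf(x)}$ is $\Lip$-Lipschitz to convert the $\ep/\Lip$ estimate into the desired $\ep$ bound. The uniformity of $\Lip$ you flag is indeed built into the definition of $\Raf$, where $M\subset\G_{\Lip}$ is fixed once by the LMP of $\Omega$.
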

\begin{proof}
Let $\Lip>0$ be associated with the LMP of $\Omega$ and fix $\ep>0$. According to Remark \ref{4.7}, there is $g\in C(K,\G_{\Lip})$ such that $d(c_x(\omega),g_{\taf(x)}^{-1}\circ g_x(\omega))<\ep/\Lip$ for all $x\in K$ and $\omega\in\Omega$. Upon replacing $\omega$ with $g_x^{-1}(\omega)$, we have
\[d(c_x\circ g_x^{-1}(\omega), g_{\taf(x)}^{-1}(\omega))<\ep/\Lip\]
Note that $g$ takes values in $M\subset \G_{\Lip}$, which follows that $g_x$ is $\Lip$-Lipschitz for all $x\in K$. Therefore, we have
\[d(g_{\taf(x)}\circ c_x\circ g_x^{-1}(\omega),\omega)\leq \Lip d(c_x\circ g_x^{-1}(\omega), g_{\taf(x)}^{-1}(\omega))<\ep\]
By taking $h_k=g_k^{-1}$, we complete the proof.
\end{proof}
\begin{rem}\label{4.9}
Let $\Omega=\T$ and $(K,\taf)$ be any Cantor minimal system. Let $\xi:K\to\T$ be a continuous map and define a homeomorphism $\taf\times R_{\xi}$ of $X=K\times\T$ by $(x,t)\mapsto (\taf(x), t+\xi(x))$ as in \cite{LM}. Then $\taf\times R_\xi\in \overline{\mathcal{D}_0(\taf)}$. In fact, according to Lemma 6.1 of \cite{LM}, for any $\ep>0$, there is $\eta\in C(K,\T)$ such that 
\[d(\xi(x),\eta(x)-\eta(\taf(x)))<\ep\]
for all $x\in K$. Consequently, if $\taf\times R_\xi$ is minimal, then $\taf\times R_\xi\in\Raf$. Similarly, for $\Omega=\T^2$, all the minimal homeomorphisms of the form $\taf\times R_\xi\times R_\eta$ considered in \cite{WS}, which send $(x,t_1,t_2)$ to $(\taf(x), t_1+\xi(x),t_2+\eta(x))$ are elements of $\Raf$.
\end{rem}

\vspace{0.2cm}

\section{Approximate conjugacies}\label{sec:5}
The concepts of approximate conjugacies are defined early in the works of the classification of minimal dynamical systems on the Cantor space (see \cite{LM1}), on the product of the Cantor space and the circle (see \cite{LM} and \cite{LM2}) of H. Lin and H. Matui, and later appeared in \cite{WS}, where minimal dynamical systems on the product of the Cantor space and the torus are considered by W. Sun. See also \cite{L3} or \cite{M}  for more details. 

In this section, we keep the condition that $\Omega$ is a (not necessarily infinite) finite-dimensional connected finite CW-complex. Additionally, we assume $K_0(C(\Omega))$ is torsion free and $K_1(C(\Omega))=0$. For a Cantor minimal system $(K,\taf)$, we write $X=K\times \Omega$ and $A_\af=C^*(X,\af)$.
\begin{df}\label{5.1}
Let $Z$ be a compact metric space. Let $(Z,\af)$ and $(Z,\bt)$ be two dynamical systems. We say that $(Z,\af)$ is {\em unilaterally (weakly) approximate conjugate to $(Z,\bt)$ via $\sigma_n$}, if $\sigma_n:Z\to Z$ is a sequence of homeomorphisms of $Z$ such that
\[\lim_{n\to\infty} d(\sigma_n\circ\af\circ\sigma_n^{-1},\bt)=0\]
We say that $(X,\alpha)$ and $(X,\beta)$ are  {\it (bilaterally weakly) approximately conjugate (via $\sigma_n$ and $\gamma_n$)}, if $\sigma_n,\gamma_n:X\to X$ are two sequences of homeomorphisms of $X$ such that 

(1) $(X,\af)$ is unilaterally approximately conjugate to $(X,\bt)$ via $\sigma_n$ and,

(2) $(X,\bt)$ is unilaterally approximately conjugate to $(X,\af)$ via $\gamma_n$

\noindent We write $(X,\alpha)\sim_{{\rm app}}(X,\beta)$ (via $\sigma_n,\gamma_n$), if $(X,\alpha)$ and $(X,\beta)$ are approximately conjugate (via $\sigma_n$ and $\gamma_n$).
\end{df}
\begin{rem}\label{5.2}
By the unilaterally approximate conjugacy of $(X,\af)$ to $(X,\bt)$ via $\sigma_n$, we just mean the following approximately (or almost) commutative diagram:
\begin{equation}
\begin{tikzcd}
Z   \ar[r, "\af"]  \ar[d, rightsquigarrow, "\sigma_n"'] \ar[dr, phantom, "\circlearrowleft_{\varepsilon_n}"] & Z \ar[d, rightsquigarrow, "\sigma_n"]\\
Z \ar[r, rightarrow, "\bt"] & Z
\end{tikzcd}
\end{equation}
Actually, compared with the topological conjugacy (or flip conjugacy), the weakly approximate conjugacy is a rather weak condition. For example, Remark \ref{4.7} and Lemma \ref{4.8} imply the following lemma, which shows that a minimal dynamical system can even be bilaterally approximately conjugate to a non-minimal system with uncountable minimal subsets.
\end{rem}
\begin{lem}\label{5.3}
Let $(K,\taf)$ be a Cantor minimal system and $\Omega$ satisfies the LMP. Then for every $\taf\times c\in\Raf$, $(X,\taf\times\id)$ is (bilaterally) weakly approximately conjugate to $(X,\taf\times c)$, via conjugate maps whose cocycles induce the identity maps on $K_i(C(\Omega))\,(i=0,1)$ at every point of $K$.
\end{lem}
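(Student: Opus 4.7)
The plan is to construct the two sequences $\sigma_n,\gamma_n\colon X\to X$ directly from the approximations supplied by Remark \ref{4.7} and Lemma \ref{4.8}, taking their cocycles in the $\Omega$-direction to lie in $G_\Lip$ so that the $K$-theoretic condition is automatic from Remark \ref{4.7}.

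To exhibit the unilateral approximate conjugacy of $(X,\taf\times\id)$ to $(X,\taf\times c)$, I fix a sequence $\ep_n\searrow 0$ and, for each $n$, invoke Remark \ref{4.7} to obtain a Lipschitz constant $\Lip>0$ (coming from the LMP of $\Omega$, independent of $n$) and a cocycle $g^n\in C(K,\G_\Lip)$ with
\[
\sup_{x\in K,\,\omega\in\Omega}d_\Omega\bigl(c_x(\omega),(g^n_{\taf(x)})^{-1}\circ g^n_x(\omega)\bigr)<\ep_n.
\]
Defining $\sigma_n(x,\omega)=(x,(g^n_x)^{-1}(\omega))$, a direct computation yields
\[
\sigma_n\circ(\taf\times\id)\circ\sigma_n^{-1}(x,\omega)=\bigl(\taf(x),\,(g^n_{\taf(x)})^{-1}\circ g^n_x(\omega)\bigr),
\]
so the forward-sup-distance to $\taf\times c$ is dominated by $\ep_n$. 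The backward-sup-distance (between inverses) is handled automatically: for a sequence of homeomorphisms of the compact space $X$, uniform convergence on the nose forces uniform convergence of the inverses, via the uniform continuity of $(\taf\times c)^{-1}$. Hence the full metric $d$ on $\Homeo(X)$ tends to zero.

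Symmetrically, to produce $\gamma_n$ witnessing the unilateral approximate conjugacy of $(X,\taf\times c)$ to $(X,\taf\times\id)$, I apply Lemma \ref{4.8} to obtain $h^n\in C(K,G_\Lip)$ satisfying
\[
\sup_{x,\omega}d_\Omega\bigl((h^n_{\taf(x)})^{-1}\circ c_x\circ h^n_x(\omega),\omega\bigr)<\ep_n,
\]
and set $\gamma_n(x,\omega)=(x,(h^n_x)^{-1}(\omega))$. Then the computation
\[
\gamma_n\circ(\taf\times c)\circ\gamma_n^{-1}(x,\omega)=\bigl(\taf(x),\,(h^n_{\taf(x)})^{-1}\circ c_x\circ h^n_x(\omega)\bigr)
\]
shows the forward-sup-distance to $\taf\times\id$ is at most $\ep_n$, and the backward estimate again follows automatically by compactness.

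Finally, for the $K$-theoretic assertion, both cocycles $x\mapsto(g^n_x)^{-1}$ and $x\mapsto(h^n_x)^{-1}$ take values in the group $G_\Lip$, so by the last sentence of Remark \ref{4.7}, each value induces the identity map on $K_i(C(\Omega))$ for $i=0,1$ at every $x\in K$. The main potential obstacle -- verifying the backward half of the metric $d$ on $\Homeo(X)$ -- is circumvented by the standard compactness argument for continuity of inversion in the uniform topology, so no further Lipschitz bookkeeping on the inverses $(g^n_x)^{-1}$ is required.
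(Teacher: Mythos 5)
Your proposal is correct and follows essentially the same route as the paper: both construct the conjugating maps as $\id\times(g^n)^{-1}$ and $\id\times(h^n)^{-1}$ from Remark \ref{4.7} and Lemma \ref{4.8}, and both read off the $K$-theoretic condition from the fact that the cocycles take values in $G_{\Lip}$. Your explicit treatment of the inverse half of the metric on $\Homeo(X)$ (via uniform continuity of the limit's inverse on the compact space) is a detail the paper leaves to the reader, and it is handled correctly.
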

\begin{proof}
It is clear from Definition \ref{5.1} that, for dynamical systems $(Z,\af)$ and $(Z,\bt)$, the unilaterally approximate conjugacy of $(Z,\af)$ to $(Z,\af)$ is equivalent to the condition that, for every $\ep>0$, there is $\sigma\in\Homeo(Z)$ such that
\[d(\sigma\circ\af\circ\sigma^{-1},\bt)<\ep\]
According to Corollary \ref{4.7} and Lemma \ref{4.8}, it is straightforward to check that $d(\taf\times c, G^{-1}\circ( \taf\times\id)\circ G)<\ep/\Lip$ and $d(H^{-1}\circ(\taf\times c)\circ H, \taf\times\id)<\varepsilon$, where $G=\id\times g$ and $H=\id\times g^{-1}$ constructed in Lemma \ref{4.8}.
\end{proof}
The next proposition is easy to check, however, will be used in the next section, so we state it and omit the proof.
\begin{prop}\label{5.4}
$\cdot\sim_{\rm app}\cdot$ is an equivalent relation. In particular, if $(Z,\af)\sim_{\rm app}(Z,\bt)$ via $\zeta_n$ and $(Z,\bt)\sim_{\rm app}(Z,\lambda)$ via $\nu_n$,  then for every $\ep>0$, there exist integers $N_1,N_2\ge1$ such that 
\[d(\nu_{N_1}\circ\zeta_{N_2}\circ\af\circ\zeta_{N_2}^{-1}\circ\nu_{N_1}^{-1},\lambda)<\ep\]
that is, $(Z,\af)\sim_{\rm app}(Z,\lambda)$ via a subset of $\{\nu_m\circ\zeta_n: m,n\geq1\}$.
\end{prop}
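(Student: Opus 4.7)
The plan is to verify the three axioms of an equivalence relation separately, noting that only transitivity has real content, and that the displayed estimate in the statement is precisely what the transitivity argument produces.

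Reflexivity is immediate from Definition \ref{5.1} by taking $\sigma_n=\gamma_n=\id_Z$. Symmetry is built into the bilateral formulation: if $(Z,\af)\sim_{\rm app}(Z,\bt)$ via $\sigma_n,\gamma_n$, then swapping the two sequences gives $(Z,\bt)\sim_{\rm app}(Z,\af)$ via $\gamma_n,\sigma_n$.

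For transitivity, suppose $(Z,\af)\sim_{\rm app}(Z,\bt)$ via $\zeta_n,\zeta_n'$ and $(Z,\bt)\sim_{\rm app}(Z,\lambda)$ via $\nu_n,\nu_n'$. Given $\ep>0$, I would first fix $N_1\ge1$ with $d(\nu_{N_1}\circ\bt\circ\nu_{N_1}^{-1},\lambda)<\ep/2$, using the convergence $\nu_n\circ\bt\circ\nu_n^{-1}\to\lambda$. Since $Z$ is compact, the (now fixed) homeomorphisms $\nu_{N_1}$ and $\nu_{N_1}^{-1}$ are uniformly continuous, so there exists $\delta>0$ (depending on $\nu_{N_1}$ and $\ep$) such that whenever $\phi,\psi\in\Homeo(Z)$ satisfy $d(\phi,\psi)<\delta$, one has
\[
d(\nu_{N_1}\circ\phi\circ\nu_{N_1}^{-1},\,\nu_{N_1}\circ\psi\circ\nu_{N_1}^{-1})<\ep/2.
\]
Then I would pick $N_2\ge1$ with $d(\zeta_{N_2}\circ\af\circ\zeta_{N_2}^{-1},\bt)<\delta$; applying the previous implication with $\phi=\zeta_{N_2}\circ\af\circ\zeta_{N_2}^{-1}$ and $\psi=\bt$, combined with the triangle inequality, yields the displayed estimate. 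The reverse unilateral bound $d(\zeta_{M_2}'\circ\nu_{M_1}'\circ\lambda\circ(\zeta_{M_2}'\circ\nu_{M_1}')^{-1},\af)<\ep$ is obtained by the symmetric argument applied to the second sequences of each bilateral conjugacy, giving bilateral approximate conjugacy of $(Z,\af)$ and $(Z,\lambda)$.

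The only delicate point is the order of quantifiers: $N_1$ must be fixed before $\delta$, and $\delta$ before $N_2$, since the modulus of uniform continuity of $\nu_{N_1}$ depends on $N_1$. This ordering is recorded in the displayed composition $\nu_{N_1}\circ\zeta_{N_2}$ of the statement, and beyond this bookkeeping I do not expect any substantive obstacle.
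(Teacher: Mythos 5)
Your argument is correct and is exactly the routine verification the paper has in mind (the paper explicitly omits the proof as "easy to check"). The one genuinely delicate point --- fixing $N_1$ first, extracting a modulus of uniform continuity for $\nu_{N_1}$, and only then choosing $N_2$ --- is precisely what you isolate, and it works with the paper's metric $d(\phi,\psi)=\sup_x d_Z(\phi(x),\psi(x))+\sup_x d_Z(\phi^{-1}(x),\psi^{-1}(x))$ since $(\nu\circ\phi\circ\nu^{-1})^{-1}=\nu\circ\phi^{-1}\circ\nu^{-1}$ and the substitution $y=\nu^{-1}(x)$ reduces both suprema to the uniform continuity of $\nu_{N_1}$ alone.
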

We now introduce the following lemma, which will play a leading role in defining the approximate $K$-conjugacy of dynamical systems $(Z,\af)$ and $(Z,\bt)$.
\begin{lem}[Proposition 3.2, \cite{LM1}]\label{5.5}
Let $Z$ be a compact metric space and let $\af$ and $\bt$ be minimal homeomorphisms on $Z$. If there exists homeomorphisms $\sigma_n:Z\to Z$ such that $(Z,\af)$ is unilaterally approximately conjugate to $(Z,\bt)$, then there exists a unital asymptotic morphism $\{\psi_n\}$ from $A_\bt$ to $A_\af$ such that for all $f\in C(Z)$,
\[\lim_{n\to\infty}\|\psi_n(u_\bt)-u_\af\|=0\]
and
\[\lim_{n\to\infty}\|\psi_n(j_\bt(f))-j_\af(f\circ\sigma_n)\|=0.\]
\end{lem}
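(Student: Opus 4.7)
The plan is to transport the canonical covariant pair generating $A_\beta$ through $\sigma_n$ to an \emph{approximately} covariant pair in $A_\alpha$, and then upgrade the resulting formal algebraic map to a genuine cpc asymptotic morphism using the amenability of the $\mathbb{Z}$-action, i.e., Fej\'er smoothing via the dual gauge action.

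First, define a unital $*$-homomorphism $\pi_n : C(Z) \to A_\alpha$ by $\pi_n(f) := j_\alpha(f \circ \sigma_n)$. Paired with the implementing unitary $u_\alpha$, the covariance defect is
\[
\bigl\| u_\alpha \pi_n(f) u_\alpha^* - \pi_n(f \circ \beta^{-1}) \bigr\| = \bigl\| f \circ \sigma_n \circ \alpha^{-1} - f \circ \beta^{-1} \circ \sigma_n \bigr\|_\infty,
\]
which is at most the modulus of continuity of $f$ applied to $\sup_z d_Z(\sigma_n \alpha^{-1} \sigma_n^{-1}(z), \beta^{-1}(z))$. Since the metric on $\Homeo(Z)$ used in the hypothesis contains the inverse supremum, $d(\sigma_n \alpha \sigma_n^{-1}, \beta) \to 0$ forces this inverse discrepancy to zero as well; iterating, the same holds with $\beta^{-1}$ replaced by $\beta^{-k}$ for each fixed $k$. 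So $(\pi_n, u_\alpha)$ is an approximately covariant representation of $(C(Z), \beta, \mathbb{Z})$ inside $A_\alpha$.

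Second, on the dense $*$-subalgebra $\mathcal{A}_0 := \mathrm{span}\{ j_\beta(f) u_\beta^k : f \in C(Z),\, k \in \mathbb{Z}\}$ of $A_\beta$, set
\[
\psi_n^0\Bigl( \sum_k j_\beta(f_k) u_\beta^k \Bigr) := \sum_k \pi_n(f_k)\, u_\alpha^k .
\]
Tautologically $\psi_n^0(u_\beta) = u_\alpha$ and $\psi_n^0(j_\beta(f)) = j_\alpha(f \circ \sigma_n)$. Expanding a product of two elements of $\mathcal{A}_0$ using the covariance in $A_\beta$ and comparing to its counterpart in $A_\alpha$ reduces the defect $\|\psi_n^0(ab) - \psi_n^0(a)\psi_n^0(b)\|$ to a finite sum of terms of the form $\|\pi_n(g \circ \beta^{-k}) - u_\alpha^k \pi_n(g) u_\alpha^{-k}\|$, each of which vanishes in the limit by the first step. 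Hence $\psi_n^0$ is approximately multiplicative on $\mathcal{A}_0$.

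Third, promote $\psi_n^0$ to a cpc map defined on all of $A_\beta$. Using the dual gauge action $\gamma_\theta$ of $\mathbb{T}$ on $A_\beta$ and the Fej\'er kernel $F_N \geq 0$, form
\[
\Phi_N(a) := \int_{\mathbb{T}} F_N(\theta)\, \gamma_\theta(a)\, d\theta = \sum_{|k| \leq N}\Bigl(1 - \tfrac{|k|}{N+1}\Bigr) E_\beta(a\, u_\beta^{-k})\, u_\beta^k,
\]
which are cpc on $A_\beta$, take values in $\mathcal{A}_0$, and converge to the identity in point-norm. The composition $\psi_{n,N} := \psi_n^0 \circ \Phi_N$ is then globally defined on $A_\beta$ and is itself cpc: it admits an integral representation $\int_{\mathbb{T}} F_N(\theta)\, \Psi_{n,\theta}(a)\, d\theta$ in which each integrand $\Psi_{n,\theta}: A_\beta \to A_\alpha$ is a cpc map built from the $*$-homomorphism $\pi_n$ and the gauge actions of $A_\beta$ and $A_\alpha$, and positivity of $F_N$ forces $\psi_{n,N}$ to be cpc. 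Setting $\psi_n := \psi_{n,N(n)}$ with $N(n) \to \infty$ chosen diagonally transfers approximate multiplicativity from $\psi_n^0$; the two stated asymptotic identities follow from $\Phi_N(j_\beta(f)) = j_\beta(f)$ and $\Phi_N(u_\beta) = \tfrac{N}{N+1}\, u_\beta \to u_\beta$.

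The hardest step will be precisely the cpc verification in the third paragraph: realizing $\psi_{n,N}$ as an honest integral of cpc maps, even though $\psi_n^0$ is not cpc on its own. Once this integral representation is in hand --- via positivity of $F_N$ together with the $*$-homomorphism property of $\pi_n$ --- the remainder of the argument amounts to bookkeeping around the diagonal $N(n)$ and the uniform-continuity estimates of the first step.
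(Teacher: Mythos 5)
The paper itself gives no proof of this lemma (it is imported verbatim as Proposition 3.2 of \cite{LM1}), so I am comparing your argument with the standard one. Your first two steps are fine and are exactly the usual route: the pair $(\pi_n,u_\alpha)$ with $\pi_n(f)=j_\alpha(f\circ\sigma_n)$ is approximately $\beta$-covariant because the metric on $\Homeo(Z)$ controls inverses, and the multiplicativity defect of the formal map $\psi_n^0$ on polynomials reduces to finitely many covariance defects. The gap is in your third step. The composition $\psi_{n,N}=\psi_n^0\circ\Phi_N$ is \emph{not} completely positive, and the integral representation you invoke does not exist: the only natural integrand is $\psi_n^0\circ\gamma_\theta$, which is neither globally defined nor cpc, and there is no fixed family of cpc maps $\Psi_{n,\theta}$ with $\int_{\mathbb T}F_N(\theta)\Psi_{n,\theta}\,d\theta=\psi_n^0\circ\Phi_N$. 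The positivity of $\Phi_N(a)=\sum_k c_k^{(N)}E_\beta(au_\beta^{-k})u_\beta^k$ uses the \emph{exact} covariance of $(j_\beta,u_\beta)$; replacing $(j_\beta,u_\beta)$ by the only approximately covariant pair $(\pi_n,u_\alpha)$ destroys it. Already for $a=y^*y$ with $y=j_\beta(f)+j_\beta(g)u_\beta$ one checks that $\psi_{n,1}(a)$ is positive only up to an error of the size of the covariance defect, and since the Fourier coefficients $E_\beta(au_\beta^{-k})$ of norm-one elements form a non-equicontinuous family, this error is not uniform over the unit ball; so $\psi_{n,N}$ is not cpc and is not even cb-close to a cpc map. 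Since Definition 2.5 requires each $\psi_n$ to be an honest (unital) cpc map, this must be repaired.

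The repair is standard and leaves the rest of your argument intact: define the map directly by the F\o lner-window compression
\[
\psi_{n,N}(a)\;=\;\frac{1}{N+1}\sum_{i,j=0}^{N}u_\alpha^{-i}\,\pi_n\bigl(E_\beta(u_\beta^{\,i}\,a\,u_\beta^{-j})\bigr)\,u_\alpha^{\,j}.
\]
This is manifestly ucp: $a\mapsto\bigl(E_\beta(u_\beta^{i}au_\beta^{-j})\bigr)_{ij}$ is ucp into $M_{N+1}(C(Z))$ (conjugation by the column with entries $u_\beta^{i}$ followed by the entrywise canonical expectation), applying the $*$-homomorphism $\pi_n$ entrywise is ucp, and the outer double sum is $\tfrac{1}{N+1}V^*(\cdot)V$ with $V=(1,u_\alpha,\dots,u_\alpha^{N})^{T}$, $V^*V=(N+1)1$. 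On Fourier coefficients it produces $\tfrac{1}{N+1}\sum_i u_\alpha^{-i}\pi_n(a_k\circ\beta^{-i})u_\alpha^{i}\cdot u_\alpha^{k}$ in place of your $c_k^{(N)}\pi_n(a_k)u_\alpha^{k}$, i.e.\ it agrees with $\psi_n^0\circ\Phi_N$ on each fixed polynomial up to covariance defects over the window $\{0,\dots,N\}$, all of which tend to $0$ as $n\to\infty$ for fixed $N$. Hence your diagonal choice $N(n)\to\infty$ (slowly) gives unital cpc maps with $\psi_n(u_\beta)=\tfrac{N(n)}{N(n)+1}u_\alpha\to u_\alpha$, $\|\psi_n(j_\beta(f))-j_\alpha(f\circ\sigma_n)\|\to0$, and asymptotic multiplicativity on the dense $*$-subalgebra, which extends to all of $A_\beta$ by contractivity.
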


\begin{df}\label{5.6}
Let $Z$ be a compact metric space and $\alpha,\beta:Z\to Z$ be two minimal rigid homeomorphisms. We say that $(Z,\alpha)$ and $(Z,\beta)$ are ({\it weakly}){\it approximately $K$-conjugate}, if $(Z,\alpha)\sim_{\rm app}(Z,\beta)$ via $\sigma_n,\gamma_n$ for some $\sigma_n, \gamma_n\in \mathcal{H}(Z)$ in the sense of Definition \ref{5.1}, and there exists sequences of unital asymptotic morphisms $\varphi_n, \psi_n$ such that 
\[\lim_{n\to\infty}\|\psi_n(j_\bt(f))-j_\af(f\circ\sigma_n)\|=0, \ \ \ \lim_{n\to\infty}\|\phi_n(j_\af(f))-j_\bt(f\circ\gamma_n)\|=0\]
for all $f\in C(Z)$. Besides, there exists 
$\kappa\in KL(A_\alpha, A_\beta)$ which induces an isomorphism
\[\tilde{\kappa}: (K_0(A_\alpha), K_0(A_\alpha)_+, [1_{A_\alpha}], K_1(A_\alpha), T(A_\alpha))\to (K_0(A_\beta), K_0(A_\beta)_+, [1_{A_\beta}], K_1(A_\beta), T(A_\beta))\]
and with $[\phi_n]=[\phi_1]=[\psi_n]^{-1}=\tilde{\kappa}$ for all $n\geq1$.
\end{df}
\begin{df}\label{5.7}
Let $Z$ be a compact metric space and $\alpha,\beta:Z\to Z$ be two rigid minimal homeomorphisms. We say that $(Z,\alpha)$ and $(Z,\beta)$ are {\it $C^*$-strongly approximately conjugate},  if there exits sequences of isomorphisms $\varphi_n:A_\alpha\to A_\beta, \psi_n:A_\beta\to A_\alpha$ and isomorphisms $\chi_n, \lambda_n: C(Z)\to C(Z)$ such that $[\varphi_n]=[\varphi_1]=[\psi_n^{-1}]$ in $KL(A_\alpha,A_\beta)$ for all $n\geq1$ and 
\[\lim_{n\to\infty}\|\varphi_n\circ j_\alpha(f)-j_\beta\circ\chi_n(f)\|=0\ \ {\rm and}\ \ \lim_{n\to\infty}\|\psi_n\circ j_\beta(f)-j_\alpha\circ\lambda_n(f)\|=0\]
for all $f\in C(Z)$.
\end{df}
\begin{rem}\label{5.8}
The Definition \ref{5.6} and Definition \ref{5.7} is only for the case $(Z,\af)$ and $(Z,\bt)$ are rigid, that is, $TR(A_\af)=TR(A_\bt)=0$. The notion of $C^*$-strongly approximate conjugacy inherits the philosophy of Tomiyama's classification theorem for topologically free dynamical systems, which states that $(Z,\af)$ and $(Z,\bt)$ are flip conjugate (that is, either $(Z,\af)$ and $(Z,\bt)$ or $(Z,\af)$ and $(Z,\bt^{-1})$ are topologically conjugate) if and only if there is a unital isomorphism $\phi: A_\af\to A_\bt$ such that the following diagram commutes:
\begin{center}
\begin{tikzcd}
C(Z)   \ar[r, "\phi|_{C(Z)}"]  \ar[d, hookrightarrow, "j_\alpha"'] \ar[dr, phantom, "\circlearrowleft"] & C(Z) \ar[d, hookrightarrow, "j_\beta"]\\
A_\af \ar[r, rightarrow, "\phi"] & A_\beta
\end{tikzcd}
\end{center}
in other words, the isomorphism $\phi$ keeps the commutative subalgebra $C(Z)$.
\end{rem}
\begin{lem}\label{5.9}
Let $\beta\in\Homeo(X)$ be a minimal rigid homeomorphism. Then, for any function $f\in C(X)_+$ and $\varepsilon>0$, there exist projections $p_1,p_2,\cdots,p_k\in \mathcal{P}(A_\beta)$ and real numbers $\lambda_1,\lambda_2,\cdots,\lambda_k$ such that, there are $x_i=[q_i]-[q_i']\in K_0(C(X))$ with $(j_\beta)_{*0}(x_i)=[p_i]$ for $i=1,2,\cdots,k$ and 
\[\textstyle|\rho_{A_\beta}(j_\beta(f))(\tau)-\sum_{i=1}^k\lambda_i\rho_{A_\beta}(j_\beta(y_i))(\tau)|<\varepsilon\]
for all $\tau\in T(A_\beta)$, where $y_i=q_i-q_i'\in M_{\infty}(C(X))$.
\end{lem}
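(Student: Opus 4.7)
The plan is to combine the rigidity of $\bt$ with the surjectivity of $(j_\bt)_{*0}$ coming from the Pimsner--Voiculescu sequence. Since $\bt$ is rigid, Proposition \ref{3.6} gives $TR(A_\bt)=0$, and in particular $RR(A_\bt)=0$. A standard consequence of real rank zero is that self-adjoint elements with finite spectrum are norm-dense in $(A_\bt)_{s.a.}$; applied to $j_\bt(f)\in (A_\bt)_+$, this yields (mutually orthogonal) projections $p_1,\ldots,p_k\in\mathcal{P}(A_\bt)$ and real numbers $\lambda_1,\ldots,\lambda_k$ with $\|j_\bt(f)-\sum_i \lambda_i p_i\|<\ep$, which immediately implies
\[\textstyle \bigl|\tau(j_\bt(f))-\sum_i \lambda_i\tau(p_i)\bigr|<\ep\]
uniformly in $\tau\in T(A_\bt)$.

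To connect the projections $p_i$ back to $C(X)$, I would invoke the Pimsner--Voiculescu six-term exact sequence for $A_\bt\cong C(X)\rtimes_\bt\Z$. Under the standing assumption of Section \ref{sec:5} that $K_1(C(\Omega))=0$, the K\"unneth formula gives $K_1(C(X))\cong C(K,\Z)\otimes K_1(C(\Omega))=0$, so the PV sequence collapses to
\[K_0(C(X))\xrightarrow{\,\mathrm{id}-\bt_{*0}^{-1}\,}K_0(C(X))\xrightarrow{(j_\bt)_{*0}}K_0(A_\bt)\to 0,\]
exhibiting the surjectivity of $(j_\bt)_{*0}$. In particular each $[p_i]$ lifts to some $x_i=[q_i]-[q_i']\in K_0(C(X))$ with $(j_\bt)_{*0}(x_i)=[p_i]$.

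Setting $y_i=q_i-q_i'\in M_\infty(C(X))_{s.a.}$, the equality $[p_i]=[j_\bt(q_i)]-[j_\bt(q_i')]$ in $K_0(A_\bt)$ forces $\rho_{A_\bt}(p_i)(\tau)=\tau(p_i)=\tau(j_\bt(q_i))-\tau(j_\bt(q_i'))=\rho_{A_\bt}(j_\bt(y_i))(\tau)$ for every $\tau\in T(A_\bt)$. Substituting $\rho_{A_\bt}(j_\bt(y_i))$ for $\rho_{A_\bt}(p_i)$ in the uniform estimate above gives precisely the claimed bound. I do not expect a serious obstacle here: the real-rank-zero approximation of $j_\bt(f)$ by a finite-spectrum element is the only step with any content, and the hypothesis $K_1(C(\Omega))=0$ is used precisely to trivialize $K_1(C(X))$ and thereby obtain the surjectivity needed to transfer each $[p_i]$ back into $K_0(C(X))$.
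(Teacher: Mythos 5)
Your argument is correct and is essentially identical to the paper's own proof: rigidity gives $TR(A_\bt)=0$ hence $RR(A_\bt)=0$, the finite-spectrum approximation of $j_\bt(f)$ produces the $p_i$ and $\lambda_i$, the Pimsner--Voiculescu sequence with $K_1(C(X))=0$ gives surjectivity of $(j_\bt)_{*0}$ to lift each $[p_i]$, and equality of traces on equivalent projections yields the estimate. No substantive difference from the paper's route.
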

\begin{proof}
Fix $f\in C(X)_+$ and $\ep>0$. Since $(X,\bt)$ is rigid, $TR(A_\bt)=0$ by Proposition \ref{3.6}, and hence $RR(A_\bt)=0$. It follows by Theorem 3.25 of \cite{L1} that the set of elements in $(A_\bt)_{s.a.}$ with finite spectrum is dense in $(A_\bt)_{s.a.}$, that is, there are mutually orthogonal projections $\{p_1,p_2,\cdots,p_k\}\subset\Pj(A_\bt)$ and real numbers $\{\lambda_1,\lambda_2,\cdots,\lambda_k\}$ such that
\[\textstyle\|j_\bt(f)-\sum_{i=1}^k \lambda_i p_i\|<\ep\]
Since $K_1(C(\Omega))=0$ and $K_0(C(\Omega))$ is torsion free, we have $K_1(C(X))=0$ and the following exact sequence induced by the Pimsner-Voiculescu six term exact sequence
\[0\longrightarrow K_0(C(X))/{\rm ker}(\id-\bt_*)\longrightarrow K_0(C(X))\stackrel{(j_\bt)_{*0}}{\longrightarrow} K_0(A_\bt)\rightarrow 0,\]
which follows that $(j_\bt)_{*0}$ is surjective. Therefore, we can choose a finite set 
\[\F=\{x_i=[q_i]-[q_i']: q_i\in \Pj_{\infty}(C(X)), 1\leq i\leq k\}\subset K_0(C(X))\]
such that $(j_\bt)_{*0}(x_i)=[p_i]$ in $K_0(A_\bt)$ for $i=1,2,\cdots,k$. Note that equivalent projections have same traces, which follows $\rho_{A_\bt}(p_i)(\tau)=\rho_{A_\bt}(j_\bt(q_i-q_i'))(\tau)$ for all $\tau\in T(A_\bt)$ and $i=1,2,\cdots,k$. Let $y_i=q_i-q_i'$ and write
$z=\sum_{i=1}^k\lambda_i p_i$. Then we have
$\rho_{A_\bt}(z)(\tau)=\sum_{i=1}^k\lambda_i\rho_{A_\bt}(j_\bt(y_i))(\tau)$ for all $\tau\in T(A_\bt)$, which implies that
\begin{align*}
&\textstyle|\rho_{A_\beta}(j_\beta(f))(\tau)-\sum_{i=1}^k\lambda_i\rho_{A_\beta}(j_
\beta(y_i))(\tau)|\\
\leq&\textstyle|\rho_{A_\beta}(j_\beta(f))(\tau)-\rho_{A_\beta}(z)(\tau)|+|\rho_{A_\beta}(z)(\tau)-\sum_{i=1}^k\lambda_i\rho_{A_\beta}(j_
\beta(y_i))(\tau)|\\
\leq&\|j_\beta(f)-z\|<\varepsilon,
\end{align*}
This completes the proof.
\end{proof}
\begin{lem}\label{5.10}
Let $\beta\in\Homeo(X)$ be minimal and rigid.  Let $h: C(X)\hookrightarrow A_\beta$ be a unital monomorphism and $h_n: C(X)\hookrightarrow A_\beta$ be a sequence of unital monomorphisms. Suppose that for any finite subset $F\subset K^0(X)$, there is $N\geq1$, such that 
\[\rho_{A_\beta}(h_{*0}(x))(\tau)=\rho_{A_\beta}({h_n}_{*0}(x))(\tau)\]
for all $n\geq N$, $\tau\in T(A_\beta)$ and $x\in F$, then 
\[\rho_{A_\beta}(h(f))(\tau)=\lim_{n\to\infty}\rho_{A_\beta}(h_n(f))(\tau)\]
for all $f\in C(X)$ and $\tau\in T(A_\beta)$.
\end{lem}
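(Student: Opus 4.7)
My plan is to adapt the strategy from the preceding Lemma 5.9: combine the real rank zero property of $A_\beta$ (which follows from the rigidity hypothesis on $\beta$ via Proposition 3.6) with the surjectivity of $(j_\beta)_{*0}$, and then feed the finite $K_0$-data into the hypothesis.

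First I would reduce to $f\in C(X)_{s.a.}$ by splitting a general $f$ into its real and imaginary parts. Fix $\varepsilon>0$. Since $TR(A_\beta)=0$, the algebra $A_\beta$ has real rank zero, so by Theorem 3.25 of \cite{L1} there exist mutually orthogonal projections $p_1,\ldots,p_k\in A_\beta$ and real numbers $\lambda_1,\ldots,\lambda_k$ with
$$\left\| h(f)-\sum_{i=1}^k\lambda_i p_i\right\|<\varepsilon/3,$$
giving $\bigl|\rho_{A_\beta}(h(f))(\tau)-\sum_i\lambda_i\tau(p_i)\bigr|<\varepsilon/3$ uniformly in $\tau\in T(A_\beta)$.

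Next, using the Pimsner--Voiculescu six-term exact sequence together with $K_1(C(X))=0$ (a consequence of $K_1(C(\Omega))=0$), the map $(j_\beta)_{*0}\colon K_0(C(X))\to K_0(A_\beta)$ is surjective, so each $[p_i]$ lifts to some $x_i=[q_i]-[q_i']\in K_0(C(X))$ with $q_i,q_i'\in\mathcal{P}_\infty(C(X))$. I would then feed the finite set $F=\{x_1,\ldots,x_k\}$ into the hypothesis to obtain $N\geq1$ such that
$$\rho_{A_\beta}(h_{*0}(x_i))(\tau)=\rho_{A_\beta}((h_n)_{*0}(x_i))(\tau)$$
for every $n\geq N$, every $\tau\in T(A_\beta)$ and every $1\leq i\leq k$. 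Combining this with the norm approximation of $h(f)$ and the corresponding real rank zero approximation applied to $h_n(f)$ would produce the desired bound $|\rho_{A_\beta}(h(f))(\tau)-\rho_{A_\beta}(h_n(f))(\tau)|<\varepsilon$ for all $n\geq N$ and $\tau\in T(A_\beta)$, which gives pointwise convergence.

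The main obstacle I anticipate lies precisely in this chaining step: the real rank zero approximation of $h(f)$ produces projections in $A_\beta$ that are tied to $(j_\beta)_{*0}$ through the surjectivity lift, whereas the hypothesis directly compares $h_{*0}$ and $(h_n)_{*0}$, which in general differ from $(j_\beta)_{*0}$. Reconciling these will require carefully aligning the $K_0$-lifts so that both the $h$- and $j_\beta$-tracial values of the chosen classes admit a common comparison; I expect to exploit the density of $\rho_{A_\beta}(K_0(A_\beta))$ in $\operatorname{Aff}(T(A_\beta))$ (guaranteed by rigidity) to replace the $p_i$ by images under $h_{*0}$ of suitable $K_0$-classes having the same tracial values, and then track the error bounds uniformly in $n$ across the three approximation steps.
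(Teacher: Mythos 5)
Your proposal stalls exactly at the step you flag as ``the main obstacle,'' and the fix you sketch does not resolve it. Approximating $h(f)$ in norm by $\sum_i\lambda_ip_i$ gives you tracial control of $h(f)$ in terms of $\tau(p_i)=\rho_{A_\beta}\bigl((j_\beta)_{*0}(x_i)\bigr)(\tau)$, but the hypothesis only compares $\rho_{A_\beta}(h_{*0}(x_i))(\tau)$ with $\rho_{A_\beta}((h_n)_{*0}(x_i))(\tau)$; neither quantity is a priori related to $(j_\beta)_{*0}(x_i)$. Worse, applying real rank zero to each $h_n(f)$ separately produces a different family of projections and coefficients for each $n$, so there is nothing to compare term by term. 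Appealing to the density of $\rho_{A_\beta}(K_0(A_\beta))$ in $\Aff(T(A_\beta))$ does not help: that is a statement about $K_0(A_\beta)$ and says nothing about how the three monomorphisms $h$, $h_n$, $j_\beta$ relate on a common set of $K_0(C(X))$-classes, which is what you need.

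The missing idea is a trace-pullback argument, and it is why Lemma 5.9 is stated with an estimate holding for \emph{all} $\tau\in T(A_\beta)$ and with approximants $y_i=q_i-q_i'$ living in $M_\infty(C(X))_{s.a.}$ rather than with projections in $A_\beta$. One applies Lemma 5.9 once, to $j_\beta(f)$, obtaining fixed data $(\lambda_i,y_i,x_i)$ with $\bigl|\rho_{A_\beta}(j_\beta(f))(\tau')-\sum_i\lambda_i\rho_{A_\beta}(j_\beta(y_i))(\tau')\bigr|<\varepsilon/2$ for every trace $\tau'$. Given $\tau\in T(A_\beta)$, the states $\tilde\tau=\tau\circ h$ and $\tilde\tau_n=\tau\circ h_n$ on $C(X)$ are $\beta$-invariant and hence extend to traces on $A_\beta$; substituting $\tau'=\tilde\tau$ and $\tau'=\tilde\tau_n$ converts the single $j_\beta$-estimate into the two estimates $\bigl|\rho_{A_\beta}(h(f))(\tau)-\sum_i\lambda_i\rho_{A_\beta}(h(y_i))(\tau)\bigr|<\varepsilon/2$ and $\bigl|\rho_{A_\beta}(h_n(f))(\tau)-\sum_i\lambda_i\rho_{A_\beta}(h_n(y_i))(\tau)\bigr|<\varepsilon/2$, with the \emph{same} $\lambda_i$ and $y_i$. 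Only now does the hypothesis, applied to $F=\{x_1,\dots,x_k\}$, give $\rho_{A_\beta}(h(y_i))(\tau)=\rho_{A_\beta}(h_n(y_i))(\tau)$ for $n\geq N$, and the triangle inequality finishes. Without this device your three approximation steps cannot be aligned, so as written the argument is incomplete.
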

\begin{proof}
Let $f\in C(X)$, $\tau\in T(A_\beta)$ and $\varepsilon>0$. Without loss of generality, we assume $f\geq0$. 
Then by Lemma \ref{5.9}, there is an integer $k\geq1$, projections $p_1,p_2,\cdots,p_k\in \mathcal{P}(A_\beta)$ and real numbers $\lambda_1,\lambda_2,\cdots,\lambda_k$ such that there exist $x_i=[q_i]-[q_i']\in K_0(C(X))$ with $(j_\beta)_{*0}(x_i)=[p_i]$ for $i=1,2\cdots,k$ and
\begin{align}\label{5.5}
\textstyle|\rho_{A_\beta}(j_\beta(f))(\tau)-\sum_{i=1}^k\lambda_i\rho_{A_\beta}(j_\beta(y_i))(\tau)|<\varepsilon/2
\end{align}
for all $\tau\in T(A_\beta)$, where $y_i=q_i-q_i'$, $i=1,2,\cdots,k$.  For every $n\geq1$, we define traces $\tilde{\tau}$ and $\tilde{\tau}_n$ on $C(X)$ by 
\[\tilde{\tau}(g)=\tau(h(g))\ \ {\rm and}\ \ \tilde{\tau}_n(g)=\tau(h_n(g))\]
It is straightforward to check that $\tilde{\tau}(g\circ\beta^{-1})=\tilde{\tau}(g)$ and $\tilde{\tau}_n(g\circ\beta^{-1})=\tilde{\tau}_n(g)$
for all $g\in C(X)$. Hence, $\tilde{\tau}$ and $\tilde{\tau}_n$ can be extended to $T(A_\beta)$, which we still denote by $\tilde{\tau}$ and $\tilde{\tau}_n$.  According to \eqref{5.5}, we have
\begin{align*}
\begin{cases}
\vspace{0.1cm}
\textstyle|\rho_{A_\beta}(j_\beta(f))(\tilde{\tau})-\sum_{i=1}^k\lambda_i\rho_{A_\beta}(j_\beta(y_i))(\tilde{\tau})|=|\rho_{A_\beta}(h(f))(\tau)-\sum_{i=1}^k\lambda_i\rho_{A_\beta}(h(y_i))(\tau)|<\varepsilon/2\\
\textstyle|\rho_{A_\beta}(j_\beta(f))(\tilde{\tau}_n)-\sum_{i=1}^k\lambda_i\rho_{A_\beta}(j_\beta(y_i))(\tilde{\tau}_n)|=|\rho_{A_\beta}(h_n(f))(\tau)-\sum_{i=1}^k\lambda_i\rho_{A_\beta}(h_n(y_i))(\tau)|<\varepsilon/2
\end{cases}
\end{align*}
Let $F=\{x_1,x_2,\cdots,x_k\}\subset K_0(C(X))$. According to the assumption, there is an integer $N\geq1$, such that for all $n\geq N$ and $i=1,2,\cdots,k$, 
\[\rho_{A_\beta}(h(y_i))(\tau)=\rho_{A_\beta}(h_n(y_i))(\tau)\]
This follows $\sum_{i=1}^k\lambda_i\rho_{A_\beta}(h(y_i))(\tau)=\sum_{i=1}^k\lambda_i\rho_{A_\beta}(h_n(y_i))(\tau)$ for all $n\geq N$ and finally,
\begin{align*}
&|\rho_{A_\beta}(h(f))(\tau)-\rho_{A_\beta}(h_n(f))(\tau)|\\
\leq \ &\textstyle|\rho_{A_\beta}(h(f))(\tau)-\sum_{i=1}^k\lambda_i\rho_{A_\beta}(h(y_i))(\tau)|\\
+\ &\textstyle|\sum_{i=1}^k\lambda_i\rho_{A_\beta}(h_n(y_i))(\tau)-\rho_{A_\beta}(h_n(f))(\tau)|\\
<\ &\varepsilon/2+\varepsilon/2=\varepsilon.
\end{align*}
whenever $n\geq N$. This completes the proof.
\end{proof}
\begin{lem}\label{5.11}
Let $\beta\in\Homeo(X)$ be minimal and rigid. Let $h_1, h_2: C(X)\hookrightarrow A_\beta$ be two unital monomorphisms. Suppose that for any $x\in K_0(C(X))$,
\[\rho_{A_\bt}\circ(h_1)_{*0}(x)(\tau)=\rho_{A_\bt}\circ (h_2)_{*0}(x)(\tau)\]
for all $\tau\in T(A_\bt)$, then
\[\rho_{A_\bt}(h_1(f))(\tau)=\rho_{A_{\bt}}(h_2(f))(\tau)\]
for all $f\in C(X)$ and $\tau\in T(A_\bt)$.
\end{lem}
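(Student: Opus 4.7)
The plan is to derive this lemma directly from Lemma \ref{5.10} by specialising to a stationary sequence of monomorphisms; no new ingredients appear to be needed.

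More precisely, I will apply Lemma \ref{5.10} with the role of $h$ played by $h_1$ and with the sequence $h_n := h_2$ for every $n \geq 1$. To check the hypothesis of Lemma \ref{5.10}, fix any finite subset $F \subset K_0(C(X))$ and take $N = 1$. For any $n \geq N$, any $\tau \in T(A_\beta)$, and any $x \in F$, the assumption of the present lemma gives
\[
\rho_{A_\beta}\bigl((h_1)_{*0}(x)\bigr)(\tau) \;=\; \rho_{A_\beta}\bigl((h_2)_{*0}(x)\bigr)(\tau) \;=\; \rho_{A_\beta}\bigl((h_n)_{*0}(x)\bigr)(\tau),
\]
so the finite-set/eventual-agreement hypothesis of Lemma \ref{5.10} is trivially verified (the assumption here is actually stronger than that of Lemma \ref{5.10}, since it is valid for every $x \in K_0(C(X))$ with no reference to $N$).

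The conclusion of Lemma \ref{5.10} then yields, for every $f \in C(X)$ and every $\tau \in T(A_\beta)$,
\[
\rho_{A_\beta}(h_1(f))(\tau) \;=\; \lim_{n \to \infty} \rho_{A_\beta}(h_n(f))(\tau) \;=\; \rho_{A_\beta}(h_2(f))(\tau),
\]
which is precisely the desired equality. As a contingency, should one prefer not to route through Lemma \ref{5.10}, the same proof can be repeated from scratch: given $f \in C(X)_+$, $\tau \in T(A_\beta)$, and $\varepsilon > 0$, use Lemma \ref{5.9} to approximate $j_\beta(f)$ within $\varepsilon/2$ by a finite real-linear combination $\sum \lambda_i p_i$ with $[p_i] = (j_\beta)_{*0}(x_i)$ for some $x_i = [q_i] - [q_i'] \in K_0(C(X))$, define auxiliary traces $\tilde\tau_j(g) = \tau(h_j(g))$ on $C(X)$ (extended to $A_\beta$ as in the proof of Lemma \ref{5.10}), and compare $\rho_{A_\beta}(h_j(f))(\tau)$ with $\sum \lambda_i \rho_{A_\beta}(h_j(y_i))(\tau)$ using the hypothesised equality of the $K_0$-pairings. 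No serious obstacle is anticipated: the whole content is that the trace-pairing with $K_0$-classes already determines the trace-pairing with arbitrary self-adjoint elements, because $TR(A_\beta) = 0$ supplies enough projections to approximate in norm.
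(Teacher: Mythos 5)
Your proposal is correct and is exactly the paper's argument: the paper proves Lemma \ref{5.11} by setting $h_n = h_2$ for all $n \geq 1$ in Lemma \ref{5.10}, precisely the specialisation to a stationary sequence that you describe. The verification that the hypothesis of Lemma \ref{5.10} is (trivially) satisfied is a welcome extra detail, but nothing further is needed.
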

\begin{proof}
By setting $h_n=h_2\,(n\geq1)$ in Lemma \ref{5.10}, we get the conclusion.
\end{proof}
Recall that for two $C^*$-algebras, two maps $h_1,h_2:A\to B$, a finite subset $\F\subset A$ and $\ep>0$, we write $h_1\approx_{\ep} h_2$ on $\F$, if $\|h_1(a)-h_2(a)\|<\ep$ for all $a\in \F$. Furthermore, if $B$ is unital and there is a unitary $u\in B$ such that $\|{\rm ad}_u\circ h_1(a)-h_2(a)\|<\ep$ for all $a\in\F$, then we write $h_1\stackrel{u}{\sim}_\ep h_2$ on $\F$. For our purpose, we need the following theorem.
\begin{lem}[Theorem 3.3,\cite{L3}]\label{5.12}
Let $X$ be a compact metric space and $A$ be a unital separable simple $C^*$-algebra with $TR(A)=0$. Suppose that $h_1: C(X)\hookrightarrow A$ is a unital monomorphism. For any $\varepsilon>0$ and any finite subset $\mathcal{F}\subset C(X)$, there exist $\delta>0$ and a finite subset $\mathcal{G}\subset C(X)$ satisfying the following: if $h_2: C(X)\hookrightarrow A$ is another unital monomorphism such that
\[[h_1]=[h_2]\ {\rm in}\ KL(C(X),A)\ \ {\rm and}\ \ |\tau\circ h_1(f)-\tau\circ h_2(f)|<\delta\]
for all $f\in \mathcal{G}$ and $\tau\in T(A)$, then there exists a unitary $u\in A$ such that
\[h_1\stackrel{u}{\sim}_{\varepsilon} h_2\ {\rm on}\ \mathcal{F}.\]
\end{lem}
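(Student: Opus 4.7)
The plan is to prove this uniqueness statement by the standard three-step procedure used in the Elliott--Lin classification program for embeddings into $C^*$-algebras of tracial rank zero: reduce to a finite CW-complex target space, cut down by a projection into a finite-dimensional subalgebra using $TR(A)=0$, match the finite-dimensional pieces using the $KL$-class and the tracial data, and finally absorb the orthogonal remainder via an approximate intertwining.

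First, I would reduce to the case where $X$ is a finite CW-complex. Any compact metric space $X$ can be written as the inverse limit of a sequence of finite CW-complexes $X_n$, so $C(X)=\varinjlim C(X_n)$. Given $\F\subset C(X)$ and $\ep>0$, we can find $n$ and a finite subset $\F'\subset C(X_n)$ whose image under $C(X_n)\hookrightarrow C(X)$ approximates $\F$ to within $\ep/4$. Thus it suffices to establish the statement for finite CW-complexes $Y$, with $\F$ replaced by the (finite) pullback of $\F'$ and $\ep$ by $\ep/4$.

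Second, I would invoke $TR(A)=0$ to produce a projection $p\in A$ and a finite-dimensional $C^*$-subalgebra $B\subset pAp$ with $1_B=p$ such that (i) $p$ almost commutes with $h_i(\F)$, (ii) $ph_i(f)p$ lies within $\ep'$ of $B$ for all $f\in\F$ and $i=1,2$, and (iii) $\tau(1-p)<\eta$ for all $\tau\in T(A)$, where $\ep'$ and $\eta$ are chosen much smaller than $\delta$. This yields two unital $*$-homomorphisms (up to small error) $\widetilde{h}_i\colon C(Y)\to B$. Since $B=\bigoplus_j M_{n_j}$, each $\widetilde h_i$ is determined, up to unitary equivalence in $B$, by the spectral measures on $Y$ at each direct summand (together with the multiplicities recorded by $K_0$).

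Third, and this is the key step, one must show that these spectral measures agree closely enough for a unitary in $B$ to conjugate $\widetilde h_1$ onto $\widetilde h_2$ to within $\ep/4$ on $\F$. The hypothesis $[h_1]=[h_2]$ in $KL(C(X),A)$ (hence in $KL(C(Y),A)$) guarantees that the $K_0$-multiplicity data agree, while the tracial hypothesis $|\tau\circ h_1(f)-\tau\circ h_2(f)|<\delta$ for $f\in\G$, combined with $\tau(1-p)<\eta$, forces the finite-dimensional induced traces on $C(Y)$ to be within $\delta+O(\eta)$ on $\G$. Choosing $\G$ to be a large enough $\delta$-dense finite subset of the unit ball of $C(Y)$, this translates into weak-$*$ closeness of the spectral measures, which by a standard argument yields the desired unitary in $B$. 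The ability to read off both $K_0$ and torsion information (via the Bockstein operations) from the $KL$-class is what makes this step work uniformly in the choice of $B$.

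Fourth, I would patch. Writing $v=u\oplus(1-p)$ extended to a unitary in $A$, an elementary computation shows that $v^*h_1(f)v$ agrees with $h_2(f)$ on $\F$ up to an error controlled by $\ep'$, $\eta$, and the error in Step 3, which after suitable a priori choice of $\delta$ and $\G$ in reverse order can be made less than $\ep$. The main obstacle will be Step 3: one must carefully quantify how $\delta$, the finite set $\G$, the size of $B$, and the scalars $\ep',\eta$ in Step 2 depend on each other so that the promised $\delta$ and $\G$ at the beginning of the statement depend only on $\ep,\F,h_1$. This juggling of quantifiers (a version of Lin's "uniqueness-with-parameters" formulation) together with the need to invoke the full strength of the total $K$-theory — not just $K_0,K_1$ — to handle torsion in $K_*(C(Y))$, is the delicate technical core.
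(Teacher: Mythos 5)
The paper does not actually prove this lemma: it is imported verbatim as Theorem 3.3 of [L3] (Lin, \emph{Classification of homomorphisms and dynamical systems}), so there is no internal proof to compare against. Judged on its own terms, your outline has the right overall shape (reduction to finite CW-complexes, tracial-rank-zero cut-down, matching, patching), but Step 3 --- the only genuinely hard step --- has a real gap as written.

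The problem is the claim that weak-$*$ closeness of the spectral measures of the cut-down maps $\widetilde h_1,\widetilde h_2\colon C(Y)\to B=\bigoplus_j M_{n_j}$ ``by a standard argument yields the desired unitary in $B$''. A unital homomorphism $C(Y)\to M_{n}$ is an eigenvalue list of length $n$, and conjugating by a permutation that matches all but a small \emph{fraction} of the two lists still leaves an error of order $\|f\|$ in operator norm, coming from the unmatched eigenvalues: the norm sees the worst entry, not the average. Moreover, the tracial hypothesis $|\tau\circ h_1(f)-\tau\circ h_2(f)|<\delta$ for $\tau\in T(A)$ only controls measures after averaging against traces of $A$; it does not control the spectral measure at each individual block of $B$. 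So closeness of spectral measures does not, by itself, produce a unitary in $B$ implementing $\ep$-closeness on $\F$. The actual argument must (i) decompose each $h_i$, up to approximate unitary equivalence, as $h_i'\oplus\Sigma_i$ where $\Sigma_i$ is a tracially large sum of point evaluations (using simplicity and $TR(A)=0$ to realize prescribed measures by projections), (ii) invoke the stable uniqueness theorem of Lin and Dadarlat--Eilers: $[h_1]=[h_2]$ in $KL(C(Y),A)$ implies $h_1\oplus\sigma$ and $h_2\oplus\sigma$ are approximately unitarily equivalent for a sufficiently large absorbing $\sigma$ with finite-dimensional range, and (iii) apply the marriage-lemma matching only to the point-evaluation summands $\Sigma_i$, where the small set of mismatched eigenvalues can be absorbed into the large multiplicity. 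Your sketch omits the stable uniqueness/absorption mechanism entirely, and that mechanism is what makes the proof work; the quantifier bookkeeping you flag at the end is real but secondary.
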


\begin{thm}\label{5.13}
Let $\alpha,\beta:X\to X$ be two minimal rigid homeomorphisms. Then the followings are equivalent:

\noindent $(1)$ $(X,\alpha)$ and $(X,\beta)$ are approximately $K$-conjugate;

\noindent $(2)$ There exist homeomorphisms $\sigma_n,\gamma_n:X\to X$ such that $(X,\alpha)\sim_{\rm app}(X,\beta)$ via $\sigma_n,\gamma_n$
 and in addition, for any $x\in K_i(C(X))$, there exists $N$ such that
 \begin{align}
 \label{e5.3}(j_{\alpha}\circ\sigma_n^*)_{*i}(x)=(j_{\alpha}\circ\sigma_{m}^*)_{*i}(x), \ \ \ &(j_{\beta}\circ\gamma_n^*)_{*i}(x)=(j_{\beta}\circ\gamma_{m}^*)_{*i}(x)\\
\label{e5.4}(j_\alpha\circ(\gamma_n\circ\sigma_m)^*)_{*i}(x)=(j_\alpha)_{*i}(x),\ \ \ &(j_\beta\circ(\sigma_n\circ\gamma_m)^*)_{*i}(x)=(j_\beta)_{*i}(x)
\end{align}
 for all $m\geq n\geq N$ and for $i=0,1$.
 
 \noindent $(3)$ There exists a unital isomorphism $\theta:A_{\alpha}\to A_\beta$, sequences of unitaries $\{w_n\}\subset A_\alpha$ and $\{u_n\}\subset A_\beta$ and sequences of homeomorphisms $\sigma_n,\gamma_n: X\to X$ such that $(X,\alpha)\sim_{\rm app}(X,\beta)$ via $\sigma_n,\gamma_n$ and in addition, 
 \[\lim_{n\to\infty}\|u_n\theta(j_{\alpha}(f))u_n^*-j_{\beta}(f\circ\gamma_n^{-1})\|=0\]
 and
 \[\lim_{n\to\infty}\|w_n\theta^{-1}(j_{\beta}(f))w_n^*-j_\alpha(f\circ \sigma_n^{-1})\|=0\]
 for all $f\in C(X)$.
\end{thm}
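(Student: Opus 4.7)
The plan is to prove the cycle $(3)\Rightarrow(1)\Rightarrow(2)\Rightarrow(3)$. The implication $(3)\Rightarrow(1)$ is the most direct: given the isomorphism $\theta$ and unitaries $u_n,w_n$ from (3), set $\varphi_n=\mathrm{ad}_{u_n}\circ\theta$ and $\psi_n=\mathrm{ad}_{w_n}\circ\theta^{-1}$. These are unital $*$-homomorphisms, hence trivially unital asymptotic morphisms, with $KL$-classes $[\varphi_n]=[\theta]$ and $[\psi_n]=[\theta]^{-1}$ independent of $n$ and mutually inverse. Since $\theta$ is an isomorphism, $\tilde{\kappa}=[\theta]$ automatically induces an isomorphism of $(K_0,K_0^+,[1_{A_\alpha}],K_1,T(\cdot))$, and the approximate intertwining of (3) translates (up to relabelling $\gamma_n\mapsto\gamma_n^{-1}$, $\sigma_n\mapsto\sigma_n^{-1}$, which does not affect Definition \ref{5.1}) into the approximate commutativity demanded by Definition \ref{5.6}.

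For $(1)\Rightarrow(2)$, apply the functor $K_i$ to the approximate intertwining $\varphi_n\circ j_\alpha\approx j_\beta\circ\gamma_n^*$ on finite subsets of $C(X)$. Since $[\varphi_n]=\tilde{\kappa}$ for all $n\ge1$, the composition $(\varphi_n)_{*i}\circ(j_\alpha)_{*i}$ is the fixed map $\tilde{\kappa}_{*i}\circ(j_\alpha)_{*i}$. Hence for any $x\in K_i(C(X))$ there is $N$ such that $(j_\beta\circ\gamma_n^*)_{*i}(x)=\tilde{\kappa}_{*i}\circ(j_\alpha)_{*i}(x)$ for all $n\ge N$, which, together with the symmetric statement for $\psi_n$ and $\sigma_n$, is exactly (\ref{e5.3}). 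Identity (\ref{e5.4}) follows from applying the same principle to the composition $\psi_n\circ\varphi_m:A_\alpha\to A_\alpha$, whose $KL$-class is $\tilde{\kappa}^{-1}\cdot\tilde{\kappa}=\mathrm{id}$ and whose action on $j_\alpha(C(X))$ is asymptotically $j_\alpha\circ\sigma_n^*\circ\gamma_m^*=j_\alpha\circ(\gamma_m\circ\sigma_n)^*$.

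The substantive direction is $(2)\Rightarrow(3)$. First, Lemma \ref{5.5} (used in both directions) supplies unital asymptotic morphisms $\psi_n:A_\beta\to A_\alpha$ and $\varphi_n:A_\alpha\to A_\beta$ with $\psi_n(u_\beta)\to u_\alpha$, $\varphi_n(u_\alpha)\to u_\beta$ and $\psi_n\circ j_\beta\approx j_\alpha\circ\sigma_n^*$, $\varphi_n\circ j_\alpha\approx j_\beta\circ\gamma_n^*$. Using (\ref{e5.3}) and the surjectivity of $(j_\beta)_{*0}$ in the Pimsner--Voiculescu sequence (already exploited in Lemma \ref{5.9}), together with the relation $(\psi_n)_{*1}([u_\beta])=[u_\alpha]$, a diagonal argument over an exhausting sequence of finite subsets of $K_*(A_\beta)$ forces $(\psi_n)_{*i}$ to be eventually constant along a subsequence, and symmetrically for $\varphi_n$. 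Because $K_*(A_\alpha),K_*(A_\beta)$ are torsion free by Proposition \ref{3.3}, $\underline{K}$ carries no extra Bockstein information beyond integral $K$-theory, so a further subsequence achieves single fixed $KL$-classes $[\psi_n]=\kappa^{-1}$ and $[\varphi_n]=\kappa$; identity (\ref{e5.4}) then forces $[\psi_n\circ\varphi_m]=[\mathrm{id}]$, so $\kappa$ is invertible.

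To finish, unitality and approximate positivity of the $\varphi_n,\psi_n$ on the commutative subalgebras, together with Lemma \ref{5.11} (which promotes tracial compatibility from $K_0(C(X))$ to all of $C(X)$), show that $\tilde{\kappa}$ respects order, unit, and the tracial simplex. Since $A_\alpha,A_\beta$ are unital simple AH algebras of no dimension growth with $TR=0$ satisfying the UCT (Theorem \ref{3.1} and Proposition \ref{3.6}), Lin's classification theorem yields an honest $*$-isomorphism $\theta:A_\alpha\to A_\beta$ with $[\theta]=\kappa$. Finally, for every $\varepsilon>0$ and finite $\mathcal F\subset C(X)$, the unital monomorphisms $\theta\circ j_\alpha$ and $j_\beta\circ\gamma_n^*$ have equal $KL$-class for $n$ large and approximately equal tracial values (again by Lemma \ref{5.11}); Lemma \ref{5.12} then produces a unitary $u_n\in A_\beta$ with $\mathrm{ad}_{u_n}\circ\theta\circ j_\alpha\approx_\varepsilon j_\beta\circ\gamma_n^*$ on $\mathcal F$, and the parallel construction using $\theta^{-1}$ yields $w_n\in A_\alpha$. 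The \emph{main obstacle} is the third paragraph: lifting the pointwise stabilization of $(\psi_n)_{*i}\circ(j_\beta)_{*i}$ supplied by (\ref{e5.3}) to a single $KL$-class $\kappa$, which requires a careful diagonal argument and full use of the torsion-free hypothesis to eliminate $Pext$ contributions.
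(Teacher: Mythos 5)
Your proposal is correct and follows essentially the same route as the paper: Lemma \ref{5.5} to produce the asymptotic morphisms, stabilization of the induced $K$-theory maps via (\ref{e5.3})--(\ref{e5.4}) plus torsion-freeness (Proposition \ref{3.3}) to obtain a single invertible $KL$-class, the $TR=0$ classification to realize it by an isomorphism $\theta$, and Lemmas \ref{5.10} and \ref{5.12} to manufacture the unitaries $u_n,w_n$. The only quibble is that in the last step you should invoke the sequential Lemma \ref{5.10} rather than Lemma \ref{5.11}, since the monomorphisms $j_\beta\circ\gamma_n^*$ vary with $n$ and agree with $\theta\circ j_\alpha$ on $K_0$ only on finitely generated subgroups depending on $n$.
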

\begin{proof}
$(1)\Rightarrow(2)$: Let $(X,\alpha)$ and $(X,\beta)$ be approximately $K$-conjugate, via homeomorphisms $\sigma_n,\gamma_n:X\to X$ and let $\psi_n:A_\beta\to A_\alpha$ and $\phi_n:A_\alpha\to A_\beta$ be associated unital asymptotic morphisms. According to Definition \ref{5.6}, for every $x\in K^i(X)$, there is $N_1\in \mathbb{N}$ such that 
\[(\psi_n\circ j_\beta)_{*i}(x)=(j_{\alpha}\circ\sigma_n^{*})_{*i}(x)\ \ \ {\rm and}\ \ \ (\phi_n\circ j_\alpha)_{*i}(x)=(j_{\beta}\circ\gamma_n^{*})_{*i}(x)\]
for all $n\geq N_1$. Since $[\phi_n]=[\psi_n]^{-1}=\kappa$ for all $n\geq1$, there is $N_2\in\mathbb{N}$ such that 
\[(\psi_n\circ j_\beta)_{*i}(x)=(\psi_m\circ j_\beta)_{*i}(x)\ \ \ {\rm and}\ \ \ (\phi_n\circ j_\alpha)_{*i}(x)=(\phi_m\circ j_\alpha)_{*i}(x)\]
for all $m\geq n\geq N$. Taking $N=\max\{N_1,N_2\}$ gives the first two equations. The last two equations follows similarly, according to $[\phi_n]=[\psi_n]^{-1}=\kappa$ for all $n\geq 1$.
\vspace{0.1cm}

\noindent $(2)\Rightarrow(3)$: Since $(X,\alpha)\sim_{\rm app}(X,\beta)$ via $\sigma_n, \gamma_n$, by Lemma \ref{5.5}, there exist sequentially asymptotic morphisms $\Psi_n: A_\beta\to A_\alpha$ and $\Phi_n: A_\alpha\to A_\beta$ such that 
\begin{align}
\label{e5.8}\lim_{n\to\infty}\|\Psi_n(j_\beta(f))-j_{\alpha}(f\circ\sigma_n^{-1})\|=0, \ \ \ \ \ &\lim_{n\to\infty}\|\Phi_n(j_\alpha(f))-j_{\beta}(f\circ\gamma_n^{-1})\|=0,\\
\label{e5.9}\lim_{n\to\infty}\|\Psi_n(u_\beta)-u_\alpha\|=0,\ \ \ \ \ &\lim_{n\to\infty}\|\Phi_n(u_\alpha)-u_\beta\|=0
\end{align}
for all $f\in C(X)$. This follows that, for any finitely generated subgroup $F_i\subset K_i(A_\alpha)$ and $G_i\subset K_i(A_\beta)$, there is $N\geq1$ such that $[\Phi_n]|_{F_i}$ and $[\Psi_n]|_{G_i}$ are well-defined as homomorphisms for all $n\geq N$ and $i=0,1$. Also note that, for any $(j_\alpha)_{*i}(x)\in (j_\alpha)_{*i}(K^i(X))$ and $(j_\beta)_{*i}(x)\in (j_\beta)_{*i}(K^i(X))$, according to \eqref{e5.8} and \eqref{e5.9}, we have
\begin{align*}
[\Phi_n]((j_\alpha)_{*i}(x))=(j_\beta\circ\gamma_n^*)_{*i}(x),\ \ \ \ \ &[\Psi_n]((j_\beta)_{*i}(x))=(j_\alpha\circ\sigma_n^*)_{*i}(x)\\
[\Phi_n]([u_\alpha])=[u_\beta],\ \ \ \ \ &[\Psi_n]([u_\beta])=[u_\alpha]
\end{align*}
Combining with \eqref{e5.3} and \eqref{e5.4}, there is an isomorphism 
\[\tilde{\kappa}: (K_0(A_\alpha), K_0(A_\alpha)_+, [1_{A_\alpha}], K_1(A_\alpha))\to  (K_0(A_\beta), K_0(A_\beta)_+, [1_{A_\beta}], K_1(A_\beta))\]
such that $[\Phi_n]|_{F_i}=\tilde{\kappa}|_{F_i}$ and $[\Psi_n]|_{G_i}=\tilde{\kappa}^{-1}|_{G_i}$ for all $n\geq N$. Since $TR(A_\alpha)=TR(A_\beta)=0$, there is a unital isomorphism $\theta:A_\alpha\to A_\beta$ such that $\theta_{*}=\tilde{\kappa}$, and in addition, $\rho_{A_\alpha}(K_0(A_\alpha))$ and $\rho_{A_\beta}(K_0(A_\beta))$ are dense in $Aff(T(A_\alpha))$ and $Aff(T(A_\beta))$, which gives a homeomorphism from $T(A_\beta)$ to $T(A_\alpha)$. Moreover, since $A_\alpha$ and $A_\beta$ have torsion free $K$-groups, there is $\kappa\in KL(A_\alpha,A_\beta)$ which induces $\tilde{\kappa}$. Define a sequence of monomorphisms $\Gamma_n: C(X)\to A_\beta$ by 
\[\Gamma_n(f)=j_\beta\circ\gamma_n^*(f)\]
Although $\Gamma_n\neq \theta\circ j_\alpha$, for any finitely generated subgroup $F_i\subset K_i(C(X))$, one has
\[\tilde{\kappa}_i\circ(j_\alpha)_{*i}|_{F_i}=(\theta\circ j_{\alpha})_{*i}|_{F_i}=(\Gamma_n)_{*i}|_{F_i}\]
for sufficiently large $n$. In particular, for any $\tau\in T(A_\beta)$ and projections $p\in \mathcal{S}\subset F_i$ which generates $F_i$, $\tau(\theta\circ j_\alpha(p))=\tau(\Gamma_n(p))$ for sufficiently large $n$, that is, there is $N\geq1$ such that
\[\rho_{A_\beta}(\theta\circ j_{\alpha}(p))(\tau)=\rho_{A_\beta}(\Gamma_n(p))(\tau)\]
for all $\tau\in T(A_\beta)$ and $n\geq N$.
Since $\theta\circ j_{\alpha}$ and $\Gamma_n$ are monomorphisms from $C(X)$ into $A_\beta$, according to Lemma \ref{5.10}, we have
\[\tau(\theta\circ j_{\alpha}(f))=\lim_{n\to\infty}\tau(\Gamma_n(f))\]
for all $f\in C(X)$ and $\tau\in T(A_\beta)$. The above follows that, for any $\varepsilon>0$ and any finite subset $\mathcal{F}\subset C(X)$, there is $N\geq1$ such that
\[[\theta\circ j_{\alpha}]=[\Gamma_n]\ {\rm in}\ KL(C(X),A_\beta)\ \ {\rm and}\ \ |\tau(\theta\circ j_{\alpha}(f))-\tau(\Gamma_n(f))|<\varepsilon\]
for all $n\geq N$, $\tau\in T(A_\beta)$ and $f\in \mathcal{F}$. Then by Lemma \ref{5.12}, for any $\varepsilon>0$, there exists a unitary $u\in A_\beta$ such that
\[\theta\circ j_{\alpha}\stackrel{u}{\sim}_{\varepsilon}\Gamma_n\ \ {\rm on}\ \mathcal{F}\] 
whenever $n\geq N$. This follows that we can find a sequence of unitaries $u_n\in A_\beta$ such that 
\[\lim_{n\to\infty}\|u_n\theta(j_{\alpha}(f))u_n^*-j_\beta(f\circ\gamma_n^{-1})\|=\lim_{n\to\infty}\|u_n\theta(j_{\alpha}(f))u_n^*-\Gamma_n(f)\|=0\]
for all $f\in C(X)$. Similarly, there exists unitaries $w_n\in A_\alpha$ such that
\[\lim_{n\to\infty}\|w_n\theta^{-1}(j_{\beta}(f))w_n^*-j_\alpha(f\circ\sigma_n^{-1})\|=0\]
This proves $(3)$.

\noindent $(3)\Rightarrow (1)$:
Define sequences of isomorphisms $\varphi_n: A_\alpha\to A_\beta$ and $\psi_n: A_\beta\to A_\alpha$ by 
\[\varphi_n(a)=u_n\theta(a)u_n^*\ \ {\rm and}\ \ \psi_n(b)=w_n\theta^{-1}w_n^*\]
One checks that $[\phi_n]=[\psi_n]^{-1}=\kappa$. Note that homomorphisms are clearly asymptotic morphisms, hence $(X,\alpha)$ and $(Y,\beta)$ are approximately $K$-conjugate.
\end{proof}
\begin{rem}\label{5.14}
1. In fact, in \cite{LM1}, the approximate $K$-conjugacy is defined by the statement (2) in Theorem \ref{5.13} for the Cantor minimal systems (see Definition 5.3 of \cite{LM1}). Therefore, Theorem \ref{5.13} shows that whenever $K_1(C(\Omega))=0$, then Definition 5.3 of \cite{LM1} coincides with Definition \ref{5.6}, which includes the case of $\Omega=\{{\rm pt}\}$ (recall that in this section, $\Omega$ is not assumed to be infinite except Lemma \ref{5.2}). 

2. In \cite{LM} and \cite{LM2}, it is shown that when $\Omega=\T$, then one can choose unital isomorphisms  $\phi_n$ and $\psi_n$ instead of unital asymptotic morphisms in Definition \ref{5.6}. Therefore, the equivalence of (1) and (3) in Theorem \ref{5.13} infers that one can always choose $\phi_n$ and $\psi_n$ to be unital isomorphisms whenever $K_1(C(\Omega))=0$.
\end{rem}
\vspace{0.3cm}

\section{Approximate $K$-conjugacy of minimal skew products}\label{sec:6}
In Theorem \ref{4.5}, we have constructed a class $\Raf$ of minimal homeomorphisms on $K\times\Omega$ for every $\Omega$ with the LMP. In this section, we show the following theorem.
\begin{thm}\label{6.1}
Let $(K,\af)$ and $(K,\bt)$ be two Cantor minimal systems and $\Omega$ be an infinite finite-dimensional connected finite CW-complex with the LMP such that $K_0(C(\Omega))$ is torsion free and $K_1(C(\Omega))=0$. Let $\af=\taf\times c_1\in\Raf$ and $\bt=\tbt\times c_2\in \Rbt$ be two minimal rigid homeomorphisms of $X=K\times\Omega$. Then the following conditions are equivalent.
\vspace{0.1cm}

\noindent $(1)$ $(X,\af)$ and $(X,\bt)$ are approximately $K$-conjugate;
\vspace{0.1cm}

\noindent $(2)$ $(X,\af)$ and $(X,\bt)$ $C^*$-strongly approximately conjugate;
\vspace{0.1cm}

\noindent $(3)$ There exists a homeomorphism $\chi: X\to X$ and a unital order isomorphism 
\[\kappa:  (K_0(A_\alpha), K_0(A_\alpha)_+, [1_{A_\alpha}], K_1(A_\alpha))\to(K_0(A_\bt), K_0(A_\bt)_+, [1_{A_\bt}], K_1(A_\bt))\]
 such that the following diagram commutes:
 \begin{center}
\begin{tikzcd}
K_i(C(X))   \ar[r, "(\chi^*)_{*i}"]  \ar[d, rightarrow, "(j_\alpha)_{*i}"'] \ar[dr, phantom, "\circlearrowleft"] & K_i(C(X)) \ar[d, rightarrow, "(j_\beta)_{*i}"]\\
K_i(A_\alpha) \ar[r, rightarrow, "\kappa_i"] & K_i(A_\beta).
\end{tikzcd}
\end{center}
\end{thm}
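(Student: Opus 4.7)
The plan is to establish the cycle of implications $(2)\Rightarrow(1)\Rightarrow(3)\Rightarrow(2)$, with $(3)\Rightarrow(2)$ carrying the main technical content and requiring the classification machinery from Section \ref{sec:5}.

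For $(2)\Rightarrow(1)$: this is essentially immediate. Each unital $*$-isomorphism $\varphi_n\colon A_\af\to A_\bt$ is in particular a sequentially asymptotic morphism, so the $KL$-condition $[\varphi_n]=[\varphi_1]=[\psi_n]^{-1}$ of Definition \ref{5.7} furnishes the common class $\tilde\kappa$ required by Definition \ref{5.6}. The isomorphisms $\chi_n,\lambda_n$ of $C(X)$ correspond under Gelfand duality to homeomorphisms $\sigma_n,\gamma_n$ of $X$, and the intertwining $\varphi_n\circ j_\af\approx j_\bt\circ\chi_n$ translates directly to the asymptotic-morphism condition of Definition \ref{5.6}. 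Bilateral approximate conjugacy of the dynamics follows because $\varphi_n$ must send the implementing unitary $u_\af$ to a unitary of $A_\bt$ which, modulo $C(X)$, implements $\bt$; inspection of the covariance relation then forces $\sigma_n\circ\af\circ\sigma_n^{-1}\to\bt$ in $\Homeo(X)$ (and symmetrically for $\gamma_n$).

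For $(1)\Rightarrow(3)$: apply Theorem \ref{5.13} to obtain a unital $*$-isomorphism $\theta\colon A_\af\to A_\bt$, unitaries $u_n\in A_\bt$ and homeomorphisms $\gamma_n\colon X\to X$ with $\|u_n\theta(j_\af(f))u_n^*-j_\bt(f\circ\gamma_n^{-1})\|\to 0$ for all $f\in C(X)$. Set $\kappa:=\theta_*$; it is automatically a scaled ordered $(K_0,K_0^+,[1],K_1)$-isomorphism. A single homeomorphism $\chi$ realizing the commutative diagram of (3) is extracted from the $\gamma_n$ by means of the stabilization \eqref{e5.3}--\eqref{e5.4}: on any prescribed finitely generated subgroup of $K_i(C(X))$, the map $(j_\bt\circ\gamma_n^*)_{*i}$ eventually coincides with $\kappa\circ(j_\af)_{*i}$. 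Using the product decomposition $K_0(C(X))\cong C(K,\Z)\otimes K_0(C(\Omega))$ (which results from $K_1(C(\Omega))=0$) together with the Giordano--Putnam--Skau classification of Cantor minimal systems, one can select $\chi$ of the form $\tau\times\id$ for an appropriate homeomorphism $\tau\colon K\to K$, adjusted if necessary by a fiberwise Lipschitz perturbation provided by the LMP of $\Omega$.

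For $(3)\Rightarrow(2)$: the rigidity hypothesis and Proposition \ref{3.6} yield $TR(A_\af)=TR(A_\bt)=0$, hence the ranges $\rho_{A_\af}(K_0(A_\af))$ and $\rho_{A_\bt}(K_0(A_\bt))$ are uniformly dense in the respective affine function spaces, so the order isomorphism $\kappa$ extends uniquely to an affine homeomorphism of tracial state spaces. Since $A_\af$ and $A_\bt$ are unital, simple, separable, nuclear $TR=0$ algebras satisfying the UCT, Lin's classification theorem produces a unital $*$-isomorphism $\theta\colon A_\af\to A_\bt$ with $\theta_*=\kappa$. Consider then the two unital monomorphisms $h_1:=\theta\circ j_\af$ and $h_2:=j_\bt\circ\chi^*$ from $C(X)$ into $A_\bt$. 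The commutative diagram in (3) gives $(h_1)_{*0}=(h_2)_{*0}$; since $K_1(C(X))=0$ (because $K_1(C(\Omega))=K_1(C(K))=0$) and all $K$-groups in sight are torsion free (Proposition \ref{3.3} together with Remark \ref{4.7}), we obtain $[h_1]=[h_2]$ in $KL(C(X),A_\bt)$. Lemma \ref{5.11} then delivers the tracial agreement $\tau\circ h_1(f)=\tau\circ h_2(f)$ for all $f\in C(X)$ and $\tau\in T(A_\bt)$.

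Applying Lemma \ref{5.12}, for each finite $\mathcal{F}\subset C(X)$ and $\ep>0$ there is a unitary $u\in A_\bt$ with $h_1\stackrel{u}{\sim}_\ep h_2$ on $\mathcal{F}$. Exhausting $C(X)$ by nested finite subsets and letting $\ep\to 0$, we obtain unitaries $u_n\in A_\bt$ with $\|\mathrm{Ad}(u_n)\circ h_1(f)-h_2(f)\|\to 0$ for every $f\in C(X)$. Setting $\varphi_n:=\mathrm{Ad}(u_n)\circ\theta$ gives unital $*$-isomorphisms with $[\varphi_n]=\kappa$ and $\varphi_n\circ j_\af\to j_\bt\circ\chi^*$; choosing $\chi_n:=\chi^*$ constant in $n$ fulfills the first half of Definition \ref{5.7}. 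A symmetric argument, using $\theta^{-1}$, $\kappa^{-1}$ and the homeomorphism $\chi^{-1}$ (the reversed $K$-theoretic diagram commutes by applying $\kappa^{-1}$ to the hypothesis), produces $\psi_n\colon A_\bt\to A_\af$ with $\lambda_n:=(\chi^{-1})^*$ satisfying the second half.

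The main obstacle is the $(3)\Rightarrow(2)$ implication, where one simultaneously invokes Lin's classification theorem to upgrade the $K$-theoretic isomorphism $\kappa$ to a genuine $*$-isomorphism $\theta$, and then deploys Lemma \ref{5.12} to bring the two a priori distinct embeddings $\theta\circ j_\af$ and $j_\bt\circ\chi^*$ into approximate unitary agreement; this step is made possible only by the prior tracial-compatibility results of Lemmas \ref{5.9}--\ref{5.11}, which rely critically on rigidity. A secondary subtlety appears in $(1)\Rightarrow(3)$, namely extracting a \emph{single} homeomorphism $\chi$ rather than a sequence, where the product structure $X=K\times\Omega$, the LMP of $\Omega$, and the Giordano--Putnam--Skau classification of Cantor minimal systems play an essential role.
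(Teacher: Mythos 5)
Your implication $(2)\Rightarrow(1)$ contains the essential gap. Definition \ref{5.7} places no constraint whatsoever on $\varphi_n(u_\af)$: it only asks that $\varphi_n\circ j_\af\approx j_\bt\circ\chi_n$ on $C(X)$. Writing $v_n=\varphi_n(u_\af)$ and pushing the covariance relation through $\varphi_n$ shows only that $\mathrm{Ad}(v_n)$ restricted to $j_\bt(C(X))$ approximately implements $\sigma_n\circ\af\circ\sigma_n^{-1}$; since $v_n$ is an arbitrary unitary of $A_\bt$ approximately normalizing $C(X)$, it may implement essentially any element of the (topological full group of the) orbit equivalence relation of $\bt$, and nothing forces $\mathrm{Ad}(v_n)|_{C(X)}$ to be close to $\mathrm{Ad}(u_\bt)|_{C(X)}$. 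Hence your claim that ``inspection of the covariance relation forces $\sigma_n\circ\af\circ\sigma_n^{-1}\to\bt$'' is unjustified, and producing the homeomorphisms $\sigma_n,\gamma_n$ required by Definition \ref{5.6} is precisely the hard part of the theorem. The paper's route is entirely $K$-theoretic: Lemmas \ref{6.3}--\ref{6.8} show that $\kappa_0$ carries the canonical copy of $K^0(K,\taf)$ (classes of functions of the form $f\cdot{\bf 1}_{K_0(C(\Omega))}$) onto the corresponding copy of $K^0(K,\tbt)$, yielding a unital order isomorphism $\Theta$ of the Cantor dimension groups; Theorem 5.4 of \cite{LM1} then gives approximate $K$-conjugacy of $(K,\taf)$ and $(K,\tbt)$, and Lemma \ref{6.13} (which uses the LMP and Lemma \ref{6.10} to build conjugating maps with quasi-trivial cocycles) lifts this back to $(X,\af)$ and $(X,\bt)$. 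None of this is replaceable by the covariance argument you sketch.

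The other two implications are closer to the mark. Your $(3)\Rightarrow(2)$ --- classify $\kappa$ into a $*$-isomorphism $\theta$, then compare $\theta\circ j_\af$ with $j_\bt\circ\chi^*$ via Lemmas \ref{5.11} and \ref{5.12} --- is sound and in fact mirrors the argument the paper uses for $(2)\Rightarrow(3)$ of Theorem \ref{7.9}; the paper's own proof of Theorem \ref{6.1} instead closes the cycle by proving $(3)\Rightarrow(1)$ through Lemma \ref{6.14} and then $(1)\Rightarrow(2)$ by Theorem \ref{5.13}, so your route there is an acceptable variant. In $(1)\Rightarrow(3)$ your extraction of a single $\chi$ is asserted rather than proved: the substantive point is that $\kappa_0$ intertwines the maps $\jmath_\af,\jmath_\bt$ on $C(K,\Z)$ (Lemmas \ref{6.3}, \ref{6.4} and \ref{6.7}), after which the homeomorphism $\tilde{\chi}$ of $K$ comes from the AF structure of $C(K)$ (not from the Giordano--Putnam--Skau classification), and one sets $\chi=\tilde{\chi}\times\id$; no Lipschitz perturbation of the fibre is needed because the relevant cocycles are already quasi-trivial. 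As written, however, the proposal cannot stand without supplying the missing $(2)\Rightarrow(1)$ argument.
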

Throughout this section, we will always keep the condtion on $\Omega$ as in Theorem \ref{6.1}, that is, $\Omega$ is an infinite finite-dimensional connected finite CW-complex with the LMP, torsion free $K_0(C(\Omega))$ and $K_1(C(\Omega))=0$. Denote $X=K\times\Omega$.
\begin{df}\label{6.2}
A map $c\in C(K,\Homeo(\Omega))$ is said to be {\em quasi-trivial}, if $c_x$ induces identity maps on $K_i(C(\Omega))\,(i=0,1)$ for every $x\in K$. 
\end{df}
Recall that if $\af=\taf\times c$ is a minimal homeomorphism of $X$ and $c$ is quasi-trivial, then $K_i(A_\af)\,(i=0,1)$ is torsion free and 
\[K_0(A_\af)\cong K^0(K,\taf)\otimes K_0(C(\Omega)).\]
For a continuous function $f\in C(K,\Z)$ or in $C(K,K_0(C(\Omega)))$, we use $[f]_{\taf}$ or $[f]_{\af}$ to denote the equivalence class of $f$ in $K^0(K,\taf)$ or in $K_0(K,\taf)\otimes K_0(C(\Omega))$ respectively. Also note that in this case, the positive cone of $K_0(A_\af)$ is given by
\[K_0(A_\af)_+=\{[f]_{\af}: f\in C(K,K_0(C(\Omega))): f(x)\in K_0(C(\Omega))_+\ {\rm for\ all}\ x\in K\}\]

Let $\af=\taf\times c_1$ and $\bt=\tbt\times c_2$ be minimal homeomorphisms on $X$ with quasi-trivial cocycles such that $(X, \af)$ and $(X, \bt)$ are $C^*$-strongly approximately conjugate.
\begin{lem}\label{6.3}
Let $O\subset K$ be a clopen set and $f={\bf 1}_{K_0(C(\Omega))}|_O\in C(K,K_0(C(\Omega)))$ which takes the value ${\bf 1}_{K^0(\Omega)}$ in $O$ and ${\bf 0}$ otherwise. Then there is clopen set $W\subset K$ such that
\[\kappa_0([f]_\af)=[{\bf 1}_{K_0(C(\Omega))}|_W]_\bt\]
where $\kappa_0:K_0(A_\af)\to K_0(A_\bt)$ is the unital order isomorphism associated with the isomorphism $\phi_n: A_\af\to A_\bt$ in Definition \ref{5.7}.
\end{lem}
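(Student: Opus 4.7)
The strategy is to translate the $K_0$-theoretic statement into one about the projection $p := j_\alpha(1_{O \times \Omega}) \in A_\alpha$, apply the $C^*$-strong approximate conjugacy to transport $p$ to a close-by projection in $A_\beta$, and then use the connectedness of $\Omega$ to force the resulting clopen subset of $X$ to have product form.

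First, I would unpack the identification from Proposition \ref{3.3}. Since $c_1$ is quasi-trivial and $K_*(C(\Omega))$ is torsion free, that proposition identifies $K_0(A_\alpha) \cong K^0(K,\taf) \otimes K_0(C(\Omega))$ in such a way that $[f]_\af$, for $f = \mathbf{1}_{K_0(C(\Omega))}|_O$, corresponds to $[1_O]_{\taf} \otimes [1_\Omega] = (j_\alpha)_{*0}([1_{O \times \Omega}]) = [p]$; likewise, for any clopen $W \subset K$, the class $[\mathbf{1}_{K_0(C(\Omega))}|_W]_\bt$ corresponds to $[j_\beta(1_{W \times \Omega})]$. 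The task therefore reduces to exhibiting a clopen $W \subset K$ with $\kappa_0([p]) = [j_\beta(1_{W \times \Omega})]$ in $K_0(A_\beta)$.

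Next, by Definition \ref{5.7}, I choose unital isomorphisms $\varphi_n : A_\alpha \to A_\beta$ and $*$-automorphisms $\chi_n$ of $C(X)$ witnessing the $C^*$-strong approximate conjugacy, so that $[\varphi_n] = \kappa$ in $KL(A_\alpha, A_\beta)$ and $\|\varphi_n \circ j_\alpha(g) - j_\beta \circ \chi_n(g)\| \to 0$ for every $g \in C(X)$. Applying this with $g = 1_{O \times \Omega}$ gives $\|\varphi_n(p) - j_\beta(q_n)\| \to 0$, where $q_n := \chi_n(1_{O \times \Omega})$ is a projection in $C(X)$ (as $\chi_n$ is a $*$-isomorphism), hence the indicator $1_{U_n}$ of some clopen set $U_n \subset X$.

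The crucial step is to show $U_n$ has product form. Since $\Omega$ is connected, for each $x \in K$ the slice $\{\omega \in \Omega : (x,\omega) \in U_n\}$ is clopen in $\Omega$, hence empty or all of $\Omega$. Setting $W_n := \{x \in K : \{x\} \times \Omega \subset U_n\}$, continuity of $1_{U_n}$ (together with total disconnectedness of $K$) forces $W_n$ to be clopen in $K$ and $U_n = W_n \times \Omega$. For $n$ sufficiently large, the projections $\varphi_n(p)$ and $j_\beta(1_{W_n \times \Omega})$ are close enough to be Murray--von Neumann equivalent, so $\kappa_0([p]) = (\varphi_n)_{*0}([p]) = [j_\beta(1_{W_n \times \Omega})]$, and $W := W_n$ works. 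The only step that needs any care is the product-form argument, but this is a purely topological consequence of $\Omega$ being connected and $K$ being totally disconnected, independent of the dynamics; the dynamical content is absorbed entirely into the existence of the maps $(\varphi_n, \chi_n)$ afforded by Definition \ref{5.7}.
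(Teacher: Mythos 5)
Your proof is correct and follows essentially the same route as the paper: both transport the projection $1_{O\times\Omega}$ through the isomorphisms $\varphi_n$ of Definition \ref{5.7} and identify the resulting class with that of a clopen set of product form. The only cosmetic difference is that the paper writes $\gamma_n=\tilde{\gamma}_n\times z_n$ and observes that the cocycle fixes ${\bf 1}_{K_0(C(\Omega))}$, whereas you deduce the product form $U_n=W_n\times\Omega$ directly from the connectedness of $\Omega$ and total disconnectedness of $K$ --- these are two phrasings of the same fact.
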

\begin{proof}
Let $\sigma_n,\gamma_n: X\to X$ be the homeomorphisms and $\phi_n: A_\af\to A_\bt, \psi_n: A_\bt\to A_\af$ be the unital isomorphisms associated with the $C^*$-strongly approximate conjugacy of $(X,\af)$ and $(X,\bt)$. According to the Definition \ref{5.7}, for every $f\in K_0(C(X))$, there is $N\geq1$ such that 
\[\kappa_0((j_\af)_{*0}(f))=(j_\bt)_{*0}(f\circ\gamma_n^{-1})\]
for all $n\geq N$. Write $\gamma_n=\tilde{\gamma}_n\times z_n$ for some $z_n\in C(K,\Homeo(\Omega))$. Note that $\gamma_n=(\tilde{\gamma}_n\times\id)\circ(\id\times z_n)$, which follows that $\gamma_n^*(f)(x)=(z_n)_x^*(f(\tilde{\gamma}_n^{-1}(x)))$ for every $x\in K$. Since $(z_n)^*_x({\bf 1}_{K_0(C(\Omega))})={\bf 1}_{K_0(C(\Omega))}$ for all $x\in K$, it follows that $\gamma_n^*(f)={\bf 1}_{K_0(C(\Omega))}|_{\tilde{\gamma}_n(O)}$ for $f={\bf 1}_{K_0(C(\Omega))}|_O$ and $n\geq N$. Hence, by taking $W=\tilde{\gamma}_N(O)$, we show the lemma.
\end{proof}
\begin{lem}\label{6.4}
For every $f\in C(K,\mathbb{Z})$, Define a function $F\in C(K,K_0(C(\Omega)))$
\[F(x)=f(x){\bf 1}_{K_0(C(\Omega))}\]
Then there exists $G\in C(K,K_0(C(\Omega)))$ such that $\kappa_0((j_\alpha)_{*0}(F))=(j_\beta)_{*0}(G)$.
\end{lem}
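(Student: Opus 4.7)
The plan is to reduce the claim to characteristic functions of clopen sets, for which Lemma \ref{6.3} already produces the desired preimage, and then use the linearity of $\kappa_0$.

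First I would exploit that $K$ is the Cantor space and that $f:K\to\mathbb{Z}$ is continuous into a discrete target: the image $f(K)$ is a finite set $\{n_1,\ldots,n_k\}\subset\mathbb{Z}$, and the preimages $O_i:=f^{-1}(\{n_i\})$ form a finite partition of $K$ into disjoint clopen sets. Consequently,
\[
f=\sum_{i=1}^k n_i\,\mathbf{1}_{O_i},\qquad F=\sum_{i=1}^k n_i\cdot \mathbf{1}_{K_0(C(\Omega))}\big|_{O_i}
\]
as elements of $C(K,\mathbb{Z})$ and $C(K,K_0(C(\Omega)))$ respectively.

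Next, I would invoke Lemma \ref{6.3}: for each $i$, applied to the clopen set $O_i$, it yields a clopen set $W_i\subset K$ with
\[
\kappa_0\bigl([\mathbf{1}_{K_0(C(\Omega))}|_{O_i}]_\alpha\bigr)=[\mathbf{1}_{K_0(C(\Omega))}|_{W_i}]_\beta.
\]
Since $(j_\alpha)_{*0}$, $(j_\beta)_{*0}$ and $\kappa_0$ are all group homomorphisms, summing these identities with coefficients $n_i$ gives $\kappa_0((j_\alpha)_{*0}(F))=(j_\beta)_{*0}(G)$, where
\[
G:=\sum_{i=1}^k n_i\cdot \mathbf{1}_{K_0(C(\Omega))}\big|_{W_i}\in C(K,K_0(C(\Omega))),
\]
which is continuous because each $W_i$ is clopen.

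I do not expect any serious obstacle; the argument is essentially bookkeeping with Lemma \ref{6.3} used as a black box. The only points worth flagging are that the clopen sets $W_i$ need not be pairwise disjoint, and that some of the coefficients $n_i$ may be negative. Neither causes trouble, since we are computing in the abelian group $C(K,K_0(C(\Omega)))$ rather than in a positive cone, and $G$ is automatically continuous as a finite $\mathbb{Z}$-linear combination of characteristic functions of clopen sets tensored with a fixed element of $K_0(C(\Omega))$.
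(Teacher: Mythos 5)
Your proposal is correct and follows essentially the same route as the paper: decompose $f$ as a finite $\mathbb{Z}$-linear combination of characteristic functions of disjoint clopen sets, apply Lemma \ref{6.3} to each piece, and sum using linearity. The only (harmless) difference is that the paper takes a single $n\ge\max_i N_i$ so that all $W_i=\tilde{\gamma}_n(O_i)$ come from one conjugating map and are therefore disjoint with union $K$ --- a feature not needed for this lemma's statement but exploited later (e.g.\ in Lemma \ref{6.7}).
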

\begin{proof}
Since $f$ is continuous and $\mathbb{Z}$ is discrete, assume $f(K)=\{a_1,a_2,\cdots,a_m\}$ and let $O_i=f^{-1}(\{a_i\})$ for $1\leq i\leq m$. Then we can write $f$ into a linear combination with coefficients $a_i$
\[f=\sum_{i=1}^m a_i\chi_{O_i}\]
such that $O_i\cap O_j=\varnothing\,(i\neq j)$. This follows that $F=\sum_{i=1}^m a_i{\bf 1}_{K_0(C(\Omega))}|_{O_i}$. 
For every $i\in\{1,2,\cdots,m\}$, according to Lemma \ref{6.3}, there is an integer $N_i\geq1$ such that 
\[\kappa_0([{\bf 1}_{K_0(C(\Omega))}|_{O_i}]_\af)=(j_\beta)_{*0}({\bf 1}_{K_0(C(\Omega))}|_{\tilde{\gamma}_n(O_i)})\]
for all $n\geq N_i$. Take $N=\max\{N_1,N_2,\cdots,N_m\}$. Then
\[\kappa_0((j_\alpha)_{*0}(F))=\sum_{i=1}^m a_i\kappa_0([{\bf 1}_{K_0(C(\Omega)))}|_{O_i}])=\sum_{i=1}^m a_i (j_\beta)_{*0}({\bf 1}_{K_0(C(\Omega))}|_{\tilde{\gamma}_n(O_i)})\]
for all $n\geq N$. Also note that for all $n\geq N$, the clopen sets $\tilde{\gamma}_n(O_i)\, (i=1,2,\cdots, m)$ are disjoint whose union is $K$. By taking $G=\sum_{i=1}^m a_i{\bf 1}_{K_0(C(\Omega))}|_{\tilde{\gamma}_N(O_i)}$, we complete the proof.
\end{proof}
\begin{lem}\label{6.5}
Let $O_1, O_2\subset K$ be clopen sets. Assume that there is a function $G\in C(K,K_0(C(\Omega)))$ with 
\[{\bf 1}_{K_0(C(\Omega))}|_{O_1}-{\bf 1}_{K_0(C(\Omega))}|_{O_2}=G-G\circ\tilde{\af}^{-1}\]
Then $[\chi_{O_1}]_{\taf}=[\chi_{O_2}]_{\taf}$ in $K^0(K,\taf)$.
\end{lem}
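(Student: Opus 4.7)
The plan is to reduce the identity from $C(K, K_0(C(\Omega)))$-coefficients to $C(K,\mathbb{Z})$-coefficients by applying the rank homomorphism pointwise, and then read off the desired coboundary relation directly in $K^0(K,\tilde{\alpha})$.

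To set up the splitting, pick any point $\omega_0 \in \Omega$ and consider the evaluation $*$-homomorphism $\mathrm{ev}_{\omega_0}: C(\Omega) \to \mathbb{C}$. It induces a rank homomorphism $r: K_0(C(\Omega)) \to K_0(\mathbb{C}) = \mathbb{Z}$ with $r(\mathbf{1}_{K_0(C(\Omega))}) = 1$, so $r$ is a left inverse of the map $\mathbb{Z} \to K_0(C(\Omega))$ sending $1 \mapsto \mathbf{1}_{K_0(C(\Omega))}$. In particular, since $K_0(C(\Omega))$ is countable (being the $K$-theory of a separable $C^*$-algebra) and therefore discrete as an abelian group, any continuous $G: K \to K_0(C(\Omega))$ is locally constant, and $g := r \circ G$ lies in $C(K,\mathbb{Z})$.

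Now apply $r$ pointwise to the given equation. Since $r$ is additive and $r(\mathbf{1}_{K_0(C(\Omega))}) = 1$, the left-hand side becomes $\chi_{O_1} - \chi_{O_2}$, while the right-hand side becomes $g - g \circ \tilde{\alpha}^{-1}$. Hence
\[
\chi_{O_1} - \chi_{O_2} = g - g\circ \tilde{\alpha}^{-1} \quad\text{in } C(K,\mathbb{Z}),
\]
which by the very definition $K^0(K,\tilde{\alpha}) = C(K,\mathbb{Z})/\{f - f\circ\tilde{\alpha}^{-1} : f \in C(K,\mathbb{Z})\}$ gives $[\chi_{O_1}]_{\tilde{\alpha}} = [\chi_{O_2}]_{\tilde{\alpha}}$, as required.

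I do not expect a serious obstacle: the sole conceptual input is that connectedness of $\Omega$ makes $\mathbf{1}_{K_0(C(\Omega))}$ split off a $\mathbb{Z}$-summand of $K_0(C(\Omega))$ via the rank map, and this lets the $K_0(C(\Omega))$-valued coboundary identity descend without loss to the $\mathbb{Z}$-valued one recorded by $K^0(K,\tilde{\alpha})$. Torsion-freeness of $K_0(C(\Omega))$ is not even needed for this step; it enters elsewhere in the paper.
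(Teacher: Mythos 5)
Your proof is correct, and it takes a genuinely different (and leaner) route than the paper's. The paper normalizes $G$ so that $G(x_0)=0$ at some point, uses the coboundary identity to show $G(\taf^n(x_0))\in\Z\,{\bf 1}_{K_0(C(\Omega))}$ for all $n\geq 0$, then invokes \emph{minimality} of $(K,\taf)$ (density of the forward orbit) plus continuity to conclude $G=g\,{\bf 1}_{K_0(C(\Omega))}$ globally, and finally uses torsion-freeness of $K_0(C(\Omega))$ to cancel ${\bf 1}_{K_0(C(\Omega))}$ and descend to $C(K,\Z)$. You instead retract onto the $\Z$-summand generated by ${\bf 1}_{K_0(C(\Omega))}$ via the rank map $r=(\mathrm{ev}_{\omega_0})_{*0}$ and push the entire identity forward; this dispenses with both minimality and torsion-freeness, as you correctly observe, and is arguably the ``right'' proof of this purely algebraic statement. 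One cosmetic point: ``countable, therefore discrete'' is not a valid implication for topological groups in general; what you actually need (and what the paper implicitly assumes in writing $C(K,K_0(C(\Omega)))$) is that $K_0(C(\Omega))$ is given the discrete topology, so that continuous maps from $K$ into it are locally constant and hence $r\circ G$ lands in $C(K,\Z)$. With that phrasing fixed, the argument is complete.
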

\begin{proof}
By replacing $G$ with $G-G(x_0)$, without loss of generality, we assume $G(x_0)=0$ at some $x_0\in K$. Then there is an integer $m_1\in\mathbb{Z}$ such that 
\[G(\taf(x_0))=G(\taf(x_0))-G(x_0)=m_1{\bf 1}_{K_0(C(\Omega))}\]
Taking a sum yields that there exists a sequence of integers $\{m_n\}_{n\geq1}$ such that 
\[G(\taf^n(x_0))=m_n{\bf 1}_{K_0(C(\Omega)))}\]
Since $\taf$ is minimal, the positive orbit of $x_0$ is dense in $K$. This follows that $G=g{\bf 1}_{K_0(C(\Omega))}$
for some $g\in C(K,\mathbb{Z})$. Therefore, according to the assumption,
\[[(\chi_{O_1}-\chi_{O_2})-(g-g\circ\taf)]{\bf 1}_{K_0(C(\Omega))}=0\]
Since $K_0(C(\Omega))$ is torsion free, we have $\chi_{O_1}-\chi_{O_2}=g-g\circ\taf$ and therefore $[\chi_{O_1}]_{\taf}=[\chi_{O_2}]_{\taf}$ in $K^0(K,\taf)$.
\end{proof}

\begin{lem}\label{6.6}
Define a map $\imath_{\tilde{\alpha}}: K^0(K,\tilde{\alpha})\to K_0(A_\alpha)$ by $\imath_{\tilde{\alpha}}([f]_{\tilde{\alpha}})=[F]_{\af}$ where $F(x)=f(x){\bf 1}_{K_0(C(\Omega))}$. Then $\imath_{\tilde{\alpha}}$ is a well-defined positive injective homomorphism.
\end{lem}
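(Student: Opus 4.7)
My plan is to verify each of the four properties—well-definedness, the homomorphism property, positivity, and injectivity—in that order, relying on Proposition \ref{3.3}, the quasi-triviality of the cocycle $c_1$ (guaranteed by Remark \ref{4.7} since $\alpha \in \mathcal{R}_0(\tilde{\alpha})$), and the telescoping argument that already appeared in Lemma \ref{6.5}.

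First I would check well-definedness. By Proposition \ref{3.3} and the description recalled just after Definition \ref{6.2}, the class $[F]_\alpha$ depends only on the class of $F$ modulo elements of the form $H - H \circ \tilde{\alpha}^{-1}$ in $C(K, K_0(C(\Omega)))$. Hence if $f - g = h - h \circ \tilde{\alpha}^{-1}$ for some $h \in C(K, \mathbb{Z})$, then setting $H(x) = h(x) {\bf 1}_{K_0(C(\Omega))}$ gives $F - G = H - H \circ \tilde{\alpha}^{-1}$, so $\imath_{\tilde{\alpha}}$ is independent of the representative. Linearity of the assignment $f \mapsto F$ immediately yields the homomorphism property.

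Positivity is almost by construction: if $f \in C(K, \mathbb{Z})_+$, then $F(x) = f(x) {\bf 1}_{K_0(C(\Omega))}$ is a non-negative multiple of the positive order unit, hence lies in $K_0(C(\Omega))_+$ for every $x \in K$, placing $[F]_\alpha$ in $K_0(A_\alpha)_+$ by the description of the positive cone after Definition \ref{6.2}.

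The main obstacle, as expected, is injectivity, but the technique of Lemma \ref{6.5} adapts directly. Suppose $[F]_\alpha = 0$, so $F = G - G \circ \tilde{\alpha}^{-1}$ for some $G \in C(K, K_0(C(\Omega)))$. Normalizing by replacing $G$ with $G - G(x_0)$ for a fixed $x_0 \in K$, we may assume $G(x_0) = 0$. A telescoping sum then shows $G(\tilde{\alpha}^n(x_0)) = m_n {\bf 1}_{K_0(C(\Omega))}$ with $m_n \in \mathbb{Z}$, because $F$ takes values in $\mathbb{Z} \cdot {\bf 1}_{K_0(C(\Omega))}$. By minimality of $\tilde{\alpha}$, the positive orbit of $x_0$ is dense in $K$, and $G$ is locally constant (being continuous from the zero-dimensional space $K$ into the finitely generated, hence discrete, group $K_0(C(\Omega))$); so $G = g \cdot {\bf 1}_{K_0(C(\Omega))}$ for some $g \in C(K, \mathbb{Z})$. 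Substituting back yields $(f - (g - g \circ \tilde{\alpha}^{-1})) {\bf 1}_{K_0(C(\Omega))} = 0$, and torsion-freeness of $K_0(C(\Omega))$ cancels the ${\bf 1}$ factor, forcing $f = g - g \circ \tilde{\alpha}^{-1}$, i.e., $[f]_{\tilde{\alpha}} = 0$ in $K^0(K, \tilde{\alpha})$.
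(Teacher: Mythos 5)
Your proposal is correct and follows essentially the same route as the paper: the same coboundary computation for well-definedness, the same observation for positivity, and for injectivity the same telescoping/density argument from Lemma \ref{6.5} combined with torsion-freeness of $K_0(C(\Omega))$ (the paper merely first writes $p-q$ as a finite $\mathbb{Z}$-combination of characteristic functions before invoking that argument, which changes nothing). Your explicit remark that $G$ is locally constant because $K_0(C(\Omega))$ is discrete is a welcome detail the paper leaves implicit.
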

\begin{proof}
We prove the lemma by the following steps.
\vspace{0.1cm}

\noindent (1) $\imath_{\tilde{\alpha}}$ is a well-defined group homomorphism: Let $f_1,f_2\in C(K,\mathbb{Z})$ such that there is $g\in C(K,\mathbb{Z})$ for which $f_1-f_2=g-g\circ\tilde{\alpha}^{-1}$. Define $G\in C(K,K_0(C(\Omega)))$ by
\[G(x)=g(x){\bf1}_{K_0(C(\Omega))}\]
Then we have $F_1(x)-F_2(x)=(f_1(x)-f_2(x)){\bf 1}_{K_0(C(\Omega))}=[g(x)-g(\tilde{\alpha}^{-1}(x))]{\bf 1}_{K_0(C(\Omega))}=G(x)-G\circ\tilde{\alpha}^{-1}(x)$, hence $[F_1]_\af=[F_2]_\af$ in $K_0(A_\alpha)$, that is, $\imath_{\tilde{\alpha}}$ is well-defined. It is straightforward to check that $\imath_{\tilde{\alpha}}$ is a group homomorphism.
\vspace{0.1cm}

\noindent (2) $\imath_{\tilde{\alpha}}$ is  positive: Let $[f]_{\taf}\in K^0(K,\tilde{\alpha})_+$, that is, we can choose $f$ such that $f(x)\in\mathbb{Z}_+\cup\{0\}$ for all $x\in K$. Then it is clear that $F(x)\in K_0(C(\Omega))_+$ for all $x\in K$ and hence $[F]_\af\geq 0$ in $K_0(A_\alpha)$.
\vspace{0.1cm}

\noindent (3) $\imath_{\taf}$ is injective: Let $p,q\in C(K,\Z)$ such that $\imath_{\taf}([p]_{\taf})=\imath_{\taf}([q]_{\taf})$. Write $p-q=\sum_{i=1}^m a_i\chi_{O_i}$ for some $a_i\in\Z$ and nonempty clopen disjoint $O_i\,(i=1,\cdots,m)$. Then 
\[\imath_{\taf}([p-q]_{\taf})=\sum_{i=1}^ma_i[{\bf 1}_{K_0(C(\Omega))}|_{O_i}]=0\ \ {\rm in}\ K_0(A_\af)\]
This follows that there is $G\in C(K,K_0(C(\Omega)))$ such that $\sum_{i=1}^m a_i{\bf 1}_{K_0(C(\Omega))}|_{O_i}=G-G\circ\taf^{-1}$. It follows similarly to Lemma \ref{6.5} that $G$ takes values in $\Z{\bf 1}_{K_0(C(\Omega))}$ and therefore the assumption that $K_0(C(\Omega))$ is torsion free implies that 
\[p-q=\sum_{i=1}^ma_i\chi_{O_i}=g-g\circ\taf^{-1}\]
for some $g\in C(K,\Z)$. Now let $P,Q\in K^0(K,\taf)$ such that $\imath_{\taf}(P)=\imath_{\taf}(Q)$. Take $p,q\in C(K,\Z)$ with $[p]_{\taf}=P$ and $[q]_{\taf}=Q$.  Then $\imath_{\taf}([p]_{\taf})=\imath_{\taf}([q]_{\taf})$. From the argument above, $[p]_{\taf}=[q]_{\taf}$ in $K^0(K,\taf)$. In other words, $P=Q$.
\end{proof}

\begin{lem}\label{6.7}
For $f\in C(K,\mathbb{Z})$, denote the element $[F]_\af\in K_0(A_\alpha)$ in Lemma \ref{6.6} by $\jmath_{\alpha}(f)$. Then $\jmath_{\alpha}: C(K,\mathbb{Z})\to K_0(A_\alpha)$ is a positive homomorphism such that 
\[\jmath_{\alpha}(C(K,\mathbb{Z})_+\setminus\{0\})\subset K_0(A_\alpha)_+\setminus\{0\}\]
Additionally, there is an order unital isomorphism
\[\vartheta: C(K,\mathbb{Z})\to C(K,\mathbb{Z})\]
such that $\kappa_0\circ \jmath_{\alpha}=\jmath_{\beta}\circ \vartheta$, i.e, there exists the following commutative diagram:
\begin{center}
\begin{tikzcd}
C(K,\mathbb{Z})   \ar[r, "\vartheta"]  \ar[d, rightarrow, "\jmath_\alpha"'] \ar[dr, phantom, ] & C(K,\mathbb{Z}) \ar[d, rightarrow, "\jmath_\beta"]\\
K_0(A_\alpha) \ar[r, rightarrow, "\kappa_0"] & K_0(A_\beta)
\end{tikzcd}
\end{center}
\end{lem}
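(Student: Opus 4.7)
The first two assertions are routine. By construction, $\jmath_\alpha$ factors as $C(K,\Z)\xrightarrow{\pi_{\taf}} K^0(K,\taf)\xrightarrow{\imath_{\taf}} K_0(A_\af)$, so it is a positive group homomorphism by Lemma~\ref{6.6}. To see the image of $C(K,\Z)_+\setminus\{0\}$ lies in $K_0(A_\alpha)_+\setminus\{0\}$: if $f\in C(K,\Z)_+\setminus\{0\}$ then $f$ is strictly positive on a nonempty clopen set, and by minimality of $(K,\taf)$ every $\taf$-invariant Borel probability measure $\nu$ on $K$ has full support, so $\int f\,d\nu>0$. Hence $[f]_{\taf}\neq 0$ in $K^0(K,\taf)$, and by injectivity of $\imath_{\taf}$, $\jmath_\alpha(f)=\imath_{\taf}([f]_\taf)\neq 0$; positivity then places it in $K_0(A_\alpha)_+\setminus\{0\}$.

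For the existence of $\vartheta$, the plan is first to transfer $\kappa_0$ to an isomorphism at the level of $K^0$, and then lift it via Giordano--Putnam--Skau. Lemma~\ref{6.4} gives $\kappa_0(\imath_{\taf}(K^0(K,\taf)))\subseteq\imath_{\tbt}(K^0(K,\tbt))$; applying the same lemma with $\af$ and $\bt$ swapped (equivalently, to $\kappa_0^{-1}$, which is induced by $\psi_n$ as in Definition~\ref{5.7}) yields the reverse inclusion. Combined with injectivity of $\imath_{\taf}$ and $\imath_{\tbt}$, this produces a unique group isomorphism $\bar\vartheta\colon K^0(K,\taf)\to K^0(K,\tbt)$ with $\imath_{\tbt}\circ\bar\vartheta=\kappa_0\circ\imath_{\taf}$; it preserves the order unit because $\imath_{\taf}([1]_{\taf})=[1_{A_\alpha}]$ and $\kappa_0$ is unital.

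The main step is to show $\bar\vartheta$ is an order isomorphism, which reduces to verifying that $\imath_{\taf}$ and $\imath_{\tbt}$ are order embeddings. Since $TR(A_\bt)=0$, $K_0(A_\bt)$ has strict comparison, so positivity of an element is detected by tracial pairings. A K\"unneth-level computation identifies $\tau(\imath_{\tbt}([g]_{\tbt}))$ with $\int_K g\,d\nu_\tau$, where $\nu_\tau\in M_{\tbt}(K)$ is the pushforward to $K$ of $\tau|_{C(X)}$. A Krylov--Bogolyubov argument applied to the averages $\frac{1}{N}\sum_{k=0}^{N-1}\beta^k_*(\nu\times m)$ (for any $m\in M(\Omega)$) shows that every $\nu\in M_{\tbt}(K)$ arises as some $\nu_\tau$; combined with the characterization of the positive cone of the simple dimension group $K^0(K,\tbt)$ by pairings with $\tbt$-invariant measures, this yields $\imath_{\tbt}([g]_{\tbt})\geq 0\Rightarrow [g]_{\tbt}\geq 0$, and the argument for $\imath_{\taf}$ is symmetric.

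Finally, I invoke Giordano--Putnam--Skau~\cite{GPS}: the order-unital isomorphism $\bar\vartheta$ is realized by a $\ast$-isomorphism $\Phi\colon C^*(K,\taf)\to C^*(K,\tbt)$ sending $C(K)$ onto $C(K)$, whose restriction is $\eta^*$ for some homeomorphism $\eta\colon K\to K$ (by Stone duality). Setting $\vartheta(f):=f\circ\eta^{-1}$ yields an order unital isomorphism of $C(K,\Z)$, and naturality of the inclusion $C(K)\hookrightarrow C^*(K,\cdot)$ at the level of $K_0$ gives $\pi_{\tbt}\circ\vartheta=\bar\vartheta\circ\pi_{\taf}$. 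Chaining the identities produces
\[
\jmath_\beta\circ\vartheta=\imath_{\tbt}\circ\pi_{\tbt}\circ\vartheta=\imath_{\tbt}\circ\bar\vartheta\circ\pi_{\taf}=\kappa_0\circ\imath_{\taf}\circ\pi_{\taf}=\kappa_0\circ\jmath_\alpha.
\]
The main obstacle will be the order-embedding verification in paragraph three, which combines strict comparison (from rigidity, $TR=0$) with the lifting of $\taf$-invariant measures on $K$ to $\alpha$-invariant measures on $X$.
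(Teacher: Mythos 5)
Your argument is correct, but it takes a genuinely different route from the paper's for the construction of $\vartheta$. The paper follows Theorem 2.6 of \cite{LM1}: it writes $C(K,\Z)=\varinjlim(\Z^{k(n)},\zeta_n)$ over a refining sequence of clopen partitions, uses Lemma \ref{6.3} and Lemma \ref{6.4} to send each generator $\chi_{O_i}$ to the nonzero characteristic function $\chi_{\tilde{\gamma}_N(O_i)}$ (so that $\kappa_0\circ\jmath_\af(\chi_{O_i})=\jmath_\bt(\chi_{\tilde{\gamma}_N(O_i)})$ with the images orthogonal and summing to $1$), checks compatibility between consecutive levels, and obtains $\vartheta$ as an inductive limit of unital positive injections, building the inverse symmetrically from $\kappa_0^{-1}\circ\jmath_\bt$; there is no appeal to \cite{GPS} and no need to know that $\imath_{\taf},\imath_{\tbt}$ are order embeddings. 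You instead descend $\kappa_0$ to an order-unit isomorphism $\bar\vartheta$ of the Cantor dimension groups and then invoke \cite{GPS} to realize it by a homeomorphism of $K$. This buys you immediately that $\vartheta$ is spatially implemented (the paper only extracts the homeomorphism $\tilde{\chi}$ later, in the proof of Theorem \ref{6.1}, from the AF-structure of $C(K)$), but it costs the extra order-embedding verification, which you handle correctly (faithfulness of tracial states on the simple algebra $A_\bt$, the Krylov--Bogolyubov lifting of $\tbt$-invariant measures on $K$ to $\bt$-invariant measures on $K\times\Omega$, and the strict-positivity description of the positive cone of the simple dimension group $K^0(K,\tbt)$). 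Two small points: (i) your inclusion $\kappa_0(\imath_{\taf}(K^0(K,\taf)))\subseteq\imath_{\tbt}(K^0(K,\tbt))$ needs the explicit form of $G$ produced in the \emph{proof} of Lemma \ref{6.4} (namely $G=g\cdot{\bf 1}_{K_0(C(\Omega))}$ for some $g\in C(K,\Z)$), not merely its statement, which only asserts $G\in C(K,K_0(C(\Omega)))$; (ii) strict comparison is not actually needed in the order-embedding step --- faithfulness of tracial states on a simple unital $C^*$-algebra already gives $\tau([p])>0$ for every nonzero class in $K_0(A_\bt)_+$, which is the only direction you use.
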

\begin{proof}
Note that $\jmath_{\af}=\imath_{\taf}\circ (j_{\taf})_{*0}$.  It is clear that for any Cantor minimal system $(K,\taf)$, one has $(j_{\taf})_{*0}(C(K,\Z)_+)\subset K_0(A_{\taf})_+\setminus\{0\}$, and therefore it follows from Lemma \ref{6.6} that $\jmath_{\alpha}(C(K,\mathbb{Z})_+\setminus\{0\})\subset K_0(A_\alpha)_+\setminus\{0\}$. The proof of the next part is based on Lemma \ref{6.3}, Lemma \ref{6.4} and the argument in Theorem 2.6 of \cite{LM1}. However, we would like to brief the proof for readers' convenience. It goes like this.

Since $K$ is the Cantor space, one may write $\displaystyle C(K,\Z)=\lim(\Z^{k(n)},\zeta_n)$, where $k(1)\leq k(2)\leq\cdots\to\infty$ is a sequence of positive integers and $\zeta_n: \Z^{k(n)}\to \Z^{k(n+1)}$ is a sequence of unital order injective homomorphisms. For every $n\geq1$, let 
\[e_1^{(n)}=(1,0,\cdots,0), e_2^{(n)}=(0,1,0,\cdots,0), \cdots, e_{k(n)}^{(n)}=(0,0,\cdots,1)\]
be the set of generators of $\Z^{k(n)}$. Note that this could be chosen by taking a sequence of clopen partitions $\{\Pj_n\}_{n\geq1}$ such that $\#\Pj_n=k(n)$, $\Pj_{n+1}$ refines $\Pj_n$ and $\bigcup_{n}\Pj_n$ generates the topology of $K$. Then $\{e_i^{(n)}\}_{i=1}^{k(n)}$ are regarded as characteristic functions on elements in $\Pj_n$. Therefore, one regard elements in $\Z^{k(n)}$ as functions in $C(K,\Z)$ which take constant on every element of $\Pj_n$. Keep these identifications. For every $n\geq1$, let
\[x(n,i)=\kappa_0\circ \jmath_{\af}(e_i^{(n)})\]
By invoking Lemma \ref{6.3} and Lemma \ref{6.4}, there are orthogonal projections $q_i\in C(K,\Z)$ such that $x(n,i)=\jmath_{\bt}(q_i)$ and $\sum_{i=1}^{k(n)}q_i={\bf 1}_{C(K,\Z)}$. Since we have shown $\jmath_{\alpha}(C(K,\mathbb{Z})_+\setminus\{0\})\subset K_0(A_\alpha)_+\setminus\{0\}$ and $\kappa_0$ is a unital order isomorphism, $x(n,i)\neq0$ and hence $q_i$ are clearly nonzero. This gives a positive unital homomorphism $\vartheta_n: \Z^{k(n)}\to C(K,\Z)$ such that 
\[\kappa_0\circ \jmath_{\af}|_{\Z^{k(n)}}=\jmath_{\bt}\circ \vartheta_n\]
Note that $\vartheta_n$ are injective because $q_i$ are nonzero and orthogonal. One can apply a similar argument by writing 
\[e_i^{(n)}=\sum_{j\in S(n,i)}e_j^{(n+1)}\]
for some disjoint sets $S(n,1),\cdots, S(n,k)$ such that $\bigcup_{i}S(n,i)=\{1,2,\cdots,k(n+1)\}$ and letting $p_i=\vartheta_n(e_i^{(n)})$. This yields that there exists orthogonal nonzero projections $p(n+1,j)\in C(K,\Z)_+\,(j\in S(n,i))$ such that
\[\sum_{j\in S(n,i)}p(n+1,j)=p_i=\vartheta_n(e_i^{(n)})\ \ {\rm and}\ \ \jmath_{\beta}(p(n+1,j))=\kappa_0\circ\jmath_{\alpha}(e_j^{(n+1)})\]
This gives us a sequence of order unital injective homomorphisms $\{\vartheta_n\}$ such that  $\kappa_0\circ\jmath_{\alpha}|_{\mathbb{Z}^{k(n)}}=\jmath_{\beta}\circ \vartheta_n$ for all $n\geq1$, which induces an order unital injective homomorphism $\vartheta: C(K,\Z)\to C(K,\Z)$ such that $\kappa_0\circ \jmath_{\af}=\jmath_{\bt}\circ\vartheta$. Finally, by applying a similar argument to $\kappa_0^{-1}\circ \jmath_{\bt}$ shows that $\vartheta$ has an order unital inverse. For more details, one may see Theorem 2.6 in \cite{LM1}.
\end{proof}
\begin{lem}\label{6.8}
There exists a unital positive isomorphism $\Theta: K^0(K,\tilde{\af})\to K^0(K,\tilde{\bt})$ such that the following diagram commutes.

$$\xymatrix@C=1.8cm{C(K,\mathbb{Z})\ar[r]^{\vartheta}\ar[d]_{(j_{\tilde{\af}})_{*0}} \ar@/_45pt/[dd]_-{\jmath_{\alpha}} & C(K,\mathbb{Z})\ar[d]^{(j_{\tilde{\bt}})_{*0}}\ar@/^45pt/[dd]^-{\jmath_{\bt}} \\
K^0(K,\tilde{\af}) \ar[r]^{\Theta} \ar[d]_{\imath_{\tilde{\af}}} & K^0(K,\tilde{\bt})\ar[d]^{\imath_{\tilde{\bt}}}\\
K_0(A_\af) \ar[r]^{\kappa_0} & K_0(A_\bt)}$$
where $\kappa_0$ is the unital positive isomorphism in Lemma \ref{6.3}, $\vartheta$ is the unital positive isomorphism defined in Lemma \ref{6.7}, $\imath_{\tilde{\af}}$ and $\imath_{\tilde{\bt}}$ are unital positive homomorphisms defined in Lemma \ref{6.6}.
\end{lem}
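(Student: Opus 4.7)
The plan is to define $\Theta$ as the map induced by $\vartheta$ on the quotients. Recall that $K^0(K,\taf)$ is the quotient of $C(K,\Z)$ by the coboundary subgroup $\{h-h\circ\taf^{-1}:h\in C(K,\Z)\}$, and $(j_{\taf})_{*0}$ is precisely this quotient map. So one sets
\[
\Theta\bigl((j_{\taf})_{*0}(f)\bigr)=(j_{\tbt})_{*0}(\vartheta(f)) \qquad (f\in C(K,\Z)),
\]
and verifies that this is a unital order isomorphism fitting into both squares.

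The main (and essentially only) non-cosmetic point is well-definedness, which is where the two previous lemmas combine. Suppose $(j_{\taf})_{*0}(f)=0$. Using the factorization $\jmath_{\af}=\imath_{\taf}\circ(j_{\taf})_{*0}$ of Lemma \ref{6.7} (together with Lemma \ref{6.6}), this gives $\jmath_{\af}(f)=0$. Then Lemma \ref{6.7} yields
\[
\imath_{\tbt}\bigl((j_{\tbt})_{*0}(\vartheta(f))\bigr)=\jmath_{\bt}(\vartheta(f))=\kappa_0(\jmath_{\af}(f))=0,
\]
and the injectivity of $\imath_{\tbt}$ from Lemma \ref{6.6} forces $(j_{\tbt})_{*0}(\vartheta(f))=0$. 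Hence $\Theta$ descends to a well-defined group homomorphism from $K^0(K,\taf)$ to $K^0(K,\tbt)$.

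Now I would verify the remaining properties directly. $\Theta$ is unital because $\vartheta$ is. For positivity, note that $K^0(K,\taf)_+$ is (by definition) the image under $(j_{\taf})_{*0}$ of $C(K,\Z)_+$, so if $f\geq 0$ then $\vartheta(f)\geq 0$ by Lemma \ref{6.7}, whence $\Theta([f]_{\taf})=[\vartheta(f)]_{\tbt}\in K^0(K,\tbt)_+$. Running the same construction with $\kappa_0^{-1}$ and $\vartheta^{-1}$ (both of which are available by Lemma \ref{6.7}) produces a two-sided positive inverse, so $\Theta$ is a unital order isomorphism. Commutativity of the top square is immediate from the defining formula; commutativity of the bottom square is the chain
\[
\imath_{\tbt}\circ\Theta\bigl([f]_{\taf}\bigr)=\jmath_{\bt}(\vartheta(f))=\kappa_0(\jmath_{\af}(f))=\kappa_0\circ\imath_{\taf}\bigl([f]_{\taf}\bigr),
\]
again via Lemma \ref{6.7}.

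In summary, the entire lemma is a short diagram-chase once one has Lemmas \ref{6.6} and \ref{6.7}; the only place genuine work was done is well-definedness, and even that reduces to combining the injectivity of $\imath_{\tbt}$ with the commutative square for $\jmath$. I do not anticipate any serious obstacle beyond book-keeping.
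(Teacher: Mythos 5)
Your proposal is correct and follows essentially the same route as the paper: define $\Theta$ on representatives via $\vartheta$, prove well-definedness by combining the factorizations $\jmath_\af=\imath_{\taf}\circ(j_{\taf})_{*0}$, $\jmath_\bt=\imath_{\tbt}\circ(j_{\tbt})_{*0}$ with the square $\kappa_0\circ\jmath_\af=\jmath_\bt\circ\vartheta$ and the injectivity of $\imath_{\tbt}$, then read off unitality, positivity and the two commuting squares. The only cosmetic difference is that you obtain bijectivity by building the inverse from $\vartheta^{-1}$ and $\kappa_0^{-1}$, whereas the paper proves surjectivity that way and deduces injectivity from the commutativity of the lower cell together with the injectivity of $\kappa_0\circ\imath_{\taf}$; the two arguments are interchangeable.
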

\begin{proof}
We first define the map $\Theta: K^0(K,\tilde{\af})\to K^0(K,\tilde{\bt})$. Let $P\in K^0(K,\tilde{\alpha})$. Then there exists $p\in C(K,\mathbb{Z})$ with $(j_{\tilde{\af}})_{*0}(p)=P$. Define
\[\Theta(P)=(j_{\tilde{\bt}})_{*0}\circ\vartheta(p)\]
We now verify that $\Theta$ is a well-defined unital positive isomorphism. Let $p_1,p_2\in C(K,\mathbb{Z})$ with $P=(j_{\tilde{\af}})_{*0}(p_1)=(j_{\tilde{\af}})_{*0}(p_2)$. We shall show that $(j_{\tilde{\bt}})_{*0}\circ\vartheta(p_1)=(j_{\tilde{\bt}})_{*0}\circ\vartheta(p_2)$. Note that $\jmath_\alpha=\imath_{\tilde{\af}}\circ(j_{\tilde{\af}})_{*0}$, which follows that $\jmath_{\af}(p_1)=\jmath_{\af}(p_2)$ 
and hence $\kappa_0\circ\jmath_{\af}(p_1)=\kappa_0\circ\jmath_{\af}(p_2)$. According to the commutative diagram in Lemma \ref{6.7}, we have $\jmath_{\bt}\circ\vartheta(p_1)=\jmath_{\bt}\circ\vartheta(p_2)$, that is,
\[\imath_{\tilde{\bt}}\circ(j_{\tilde{\bt}})_{*0}\circ\vartheta(p_1)=\imath_{\tilde{\bt}}\circ(j_{\tilde{\bt}})_{*0}\circ\vartheta(p_2)\]
According to Lemma \ref{6.6}, $\imath_{\tilde{\bt}}$ is injective, which follows that $(j_{\tilde{\bt}})_{*0}\circ\vartheta(p_1)=(j_{\tilde{\bt}})_{*0}\circ\vartheta(p_2)$. This proves that $\Theta$ is well-defined.

Now let $P_1, P_2\in K^0(K,\tilde{\af})$. Choose $p_1, p_2\in C(K,\mathbb{Z})$ such that $(j_{\tilde{\af}})_{*0}(p_i)=P_i\,(i=1,2)$. According to the assumption, we then have
\[\Theta(P_1)+\Theta(P_2)=(j_{\tilde{\bt}})_{*0}\circ\vartheta(p_1)+(j_{\tilde{\bt}})_{*0}\circ\vartheta(p_2)=(j_{\tilde{\bt}})_{*0}\circ\vartheta(p_1+p_2)\]
Since $(j_{\tilde{\af}})_{*0}(p_1+p_2)=P_1+P_2$, this is equal to $\Theta(P_1+P_2)$, and hence $\Theta$ is a group homomorphism. It follows directly from the construction that $\Theta$ is unital. Since $(j_{\tilde{\af}})_{*0}(C(K,\mathbb{Z}_+))=K^0(K,\tilde{\af})_+$, $\Theta$ is positive. It remains to show that $\Theta$ is a bijection. Let $Q\in K^0(K,\tilde{\beta})$ and $q\in C(K,\mathbb{Z})$ such that $(j_{\tilde{\bt}})_{*0}(q)=Q$. Then it is easy to check that $(j_{\tilde{\af}})_{*0}\circ \vartheta^{-1}(q)\in K^0(K,\tilde{\af})$ and 
\[\Theta((j_{\tilde{\af}})_{*0}\circ \vartheta^{-1}(q))=Q\]
Hence $\Theta$ is surjective. Finally, take $P_1, P_2\in K^0(K,\tilde{\af})$ such that $\Theta(P_1)=\Theta(P_2)$.
Note that in order to show that $P_1=P_2$, it suffices to show the commutativity of the lower cell, for once this is done, it will follow that $\kappa_0\circ\imath_{\tilde{\af}}(P_1)=\kappa_0\circ\imath_{\tilde{\af}}(P_2)$ and therefore according to Lemma \ref{6.6}, $P_1=P_2$. Now we verify that
\[\kappa_0\circ\imath_{\tilde{\af}}=\imath_{\tilde{\bt}}\circ\Theta\]
Let $P\in K^0(K,\tilde{\af})$. Then there is $p\in C(K,\mathbb{Z})$ such that $(j_{\tilde{\af}})_{*0}(p)=P$ and 
\[\Theta(P)=(j_{\tilde{\bt}})_{*0}\circ\vartheta(p)\]
This follows that $\imath_{\tilde{\bt}}\circ\Theta(P)=\imath_{\tilde{\bt}}\circ(j_{\tilde{\bt}})_{*0}\circ\vartheta(p)=\jmath_{\bt}\circ\vartheta(p)$. According to Lemma \ref{6.7},
$\jmath_{\bt}\circ\vartheta(p)=\kappa_0\circ \jmath_{\alpha}(p)=\kappa_0\circ\imath_{\tilde{\af}}\circ(j_{\tilde{\af}})_{*0}(p)=\kappa_0\circ\imath_{\tilde{\af}}(P)$, 
which implies that $\kappa_0\circ\imath_{\tilde{\af}}=\imath_{\tilde{\bt}}\circ\Theta$ and the proof is complete.
\end{proof}
\begin{cor}\label{6.9}
Let $(K,\taf)$ and $(K,\tbt)$ be Cantor minimal systems, $\af=\taf\times c_1$ and $\bt=\tbt\times c_2$ be minimal homeomorphisms on $X=K\times\Omega$ with quasi-trivial cocycles. Then the $C^*$-strongly approximate conjugacy of $(X,\af)$ and $(X,\bt)$ implies the approximate $K$-conjugacy of $(K,\taf)$ and $(K,\tbt)$.
\end{cor}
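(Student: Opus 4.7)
The proof should be a direct application of the machinery set up in Lemmas \ref{6.3}--\ref{6.8} together with the classification of Cantor minimal systems up to approximate $K$-conjugacy in \cite{LM1}. The plan is essentially to observe that Lemma \ref{6.8} already does all the work.

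First I would invoke Lemma \ref{6.8} to extract from the given $C^*$-strongly approximate conjugacy of $(X,\af)$ and $(X,\bt)$ the unital positive isomorphism
\[\Theta:\bigl(K^0(K,\taf),\,K^0(K,\taf)_+,\,[1]\bigr)\longrightarrow\bigl(K^0(K,\tbt),\,K^0(K,\tbt)_+,\,[1]\bigr).\]
Here unitality is inherited from the unitality of $\vartheta$ in Lemma \ref{6.7}, which itself comes from the fact that the intertwining isomorphisms $\phi_n:A_\af\to A_\bt$ in Definition \ref{5.7} are unital, and positivity is inherited from the fact that $\kappa_0$ is an order isomorphism. One small thing to verify (which is immediate from the definitions but worth spelling out) is that $\Theta$ sends the class of the constant function $1$ to the class of the constant function $1$, so that it is an isomorphism of pointed ordered abelian groups.

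Next I would cite the Giordano--Putnam--Skau theorem (\cite{GPS}): a unital order isomorphism of the dimension groups $K^0(K,\taf)\cong K_0(C^*(K,\taf))$ implies that $(K,\taf)$ and $(K,\tbt)$ are strongly orbit equivalent. Finally, by the main classification theorem of H.~Lin and H.~Matui in \cite{LM1}, strong orbit equivalence of Cantor minimal systems is equivalent to approximate $K$-conjugacy (and in fact to $C^*$-strong approximate conjugacy). This chain of implications yields the corollary.

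There is no real obstacle here: Lemmas \ref{6.3}--\ref{6.8} have already done the heavy lifting of transporting the $K$-theoretic data from the fibered systems on $K\times\Omega$ back down to the base Cantor systems, and the classical Cantor theory then takes over. The only point that deserves explicit mention in the write-up is checking that the isomorphism $\Theta$ of Lemma \ref{6.8} preserves the distinguished order unit, which, as noted, is built into the construction through the unitality of $\vartheta$ in Lemma \ref{6.7}.
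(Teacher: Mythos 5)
Your proposal is correct and follows essentially the same route as the paper: the paper's proof likewise consists of extracting the unital order isomorphism $\Theta$ from Lemma \ref{6.8} and then invoking the Lin--Matui classification of Cantor minimal systems (Theorem 5.4 of \cite{LM1}, together with Theorem \ref{5.13} and Remark \ref{5.14} to reconcile the two definitions of approximate $K$-conjugacy), your detour through Giordano--Putnam--Skau strong orbit equivalence being subsumed in that same circle of equivalences. The only point you gloss over that the paper handles explicitly is the compatibility of Definition \ref{5.6} with Definition 5.3 of \cite{LM1} in the case $\Omega=\{\mathrm{pt}\}$, which is exactly what the citation of Remark \ref{5.14} supplies.
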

\begin{proof}
According to Lemma \ref{6.8}, there is a unital order isomorphism
\[\Theta: (K^0(K,\taf),K_0(K,\taf)_+,[1_K])\to (K^0(K,\tbt),K_0(K,\tbt)_+,[1_K])\]
Then $(K,\taf)$ and $(K,\tbt)$ are approximately $K$-conjugate by Theorem \ref{5.13}, Remark \ref{5.14} and Theorem 5.4 in \cite{LM1}.
\end{proof}
We now begin to deal with the approximate $K$-conjugacy of $(X,\af)$ and $(X,\bt)$. First, we have the following lemma (also note that $K_1(C(\Omega))$ is not necessarily trivial in this lemma).
\begin{lem}\label{6.10}
Let $(K,\tilde{\alpha})$ and $(K,\tilde{\beta})$ be Cantor minimal systems. Let $\alpha=\tilde{\alpha}\times c_1\in\Raf$ and $\bt=\tilde{\bt}\times c_2\in\Rbt$ be two minimal homeomorphisms of $X=K\times\Omega$. Then the followings are equivalent.

\noindent $(1)$ $(X, \af)$ and $(X, \bt)$ are weakly approximately conjugate;

\noindent $(2)$ $(K,\tilde{\alpha})$ and $(K,\tilde{\bt})$ are weakly approximately conjugate;

\noindent $(3)$ $(X, \af)$ and $(X, \bt)$ are weakly approximately conjugate via $\sigma'_n,\gamma'_n$ whose cocycles are quasi-trivial.
\end{lem}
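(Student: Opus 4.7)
The plan is to prove the cycle $(3)\Rightarrow(1)\Rightarrow(2)\Rightarrow(3)$. The first implication is immediate since $(3)$ is just $(1)$ with an extra constraint on the cocycles, so only the other two require work.

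For $(1)\Rightarrow(2)$, I will exploit the product structure of $X=K\times\Omega$. Given $\sigma_n,\gamma_n\in\Homeo(X)$ realizing the weak approximate conjugacy, I will use the decomposition from Section~2 (available because $K$ is totally disconnected and $\Omega$ is connected) to write $\sigma_n=\tilde{\sigma}_n\times d_n$ and $\gamma_n=\tilde{\gamma}_n\times e_n$. A direct calculation shows that the first coordinate of $\sigma_n\circ\af\circ\sigma_n^{-1}$ is exactly $\tilde{\sigma}_n\circ\taf\circ\tilde{\sigma}_n^{-1}$, and similarly for the $\gamma_n$ side. Equipping $X$ with a product metric, the $K$-component of the distance on $\Homeo(X)$ dominates the distance on $\Homeo(K)$, so the hypothesis passes to the factor and yields $(K,\taf)\sim_{\rm app}(K,\tbt)$ via $\tilde{\sigma}_n,\tilde{\gamma}_n$.

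For $(2)\Rightarrow(3)$, the strategy is to interpolate through $(X,\taf\times\id)$ and $(X,\tbt\times\id)$ and invoke Lemma~\ref{5.3}. Given $\tilde{\sigma}_n,\tilde{\gamma}_n\in\Homeo(K)$ witnessing $(K,\taf)\sim_{\rm app}(K,\tbt)$, I will set $\Sigma_n=\tilde{\sigma}_n\times\id$ and $\Gamma_n=\tilde{\gamma}_n\times\id$. These trivially have quasi-trivial cocycles and yield $(X,\taf\times\id)\sim_{\rm app}(X,\tbt\times\id)$. Since $\af\in\Raf$ and $\bt\in\Rbt$, Lemma~\ref{5.3} supplies weak approximate conjugacies
\[(X,\af)\sim_{\rm app}(X,\taf\times\id)\sim_{\rm app}(X,\tbt\times\id)\sim_{\rm app}(X,\bt)\]
whose outer conjugating maps have quasi-trivial cocycles. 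I then apply Proposition~\ref{5.4} twice to produce $(X,\af)\sim_{\rm app}(X,\bt)$ via compositions $\sigma'_n=h_n\circ\Sigma_{k_n}\circ g_n$ (and dually for $\gamma'_n$), with $g_n,h_n$ having quasi-trivial cocycles.

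The only genuine (and routine) point to verify, which I expect to be the main technical step, is that quasi-triviality of cocycles is preserved under composition: if $\sigma=\tilde{\sigma}\times d$ and $\tau=\tilde{\tau}\times e$ with $d,e$ quasi-trivial, then the cocycle of $\sigma\circ\tau$ at $x\in K$ is $d_{\tilde{\tau}(x)}\circ e_x$, and functoriality of $K$-theory gives $(d_{\tilde{\tau}(x)}\circ e_x)_{*i}=\id$ on $K_i(C(\Omega))$ for $i=0,1$. Everything else amounts to synchronizing indices via Proposition~\ref{5.4} and re-indexing the resulting sequences, which is standard bookkeeping.
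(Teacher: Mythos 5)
Your proposal is correct and follows essentially the same route as the paper: projecting onto the $K$-factor via the decomposition $\sigma_n=\tilde\sigma_n\times d_n$ for $(1)\Rightarrow(2)$, and interpolating through $(X,\taf\times\id)$ and $(X,\tbt\times\id)$ using Lemma \ref{5.3} and Proposition \ref{5.4} for $(2)\Rightarrow(3)$. The only cosmetic difference is that the paper deduces quasi-triviality of the composed cocycles from the fact that they take values in $G_{\Lip}$ (which lies in the identity component of $\Homeo(\Omega)$), whereas you argue it directly from functoriality of $K$-theory under composition; both observations are equally valid.
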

\begin{proof}
$(1)\Rightarrow (2)$: This is a straightforward verification. Let $(X, \af)$ and $(X, \bt)$ be weakly approximately conjugate via $\sigma_n,\gamma_n$. Write $\sigma_n=\tilde{\sigma}_n\times y_n$ and $\gamma_n=\tilde{\gamma}_n\times z_n$ respectively. To show that $(K,\tilde{\alpha})$ and $(K,\tilde{\bt})$ are weakly approximately conjugate, it suffices to show that for every $\varepsilon>0$, there are homeomorphisms $\nu:K\to K$ and $\nu':K\to K$ such that for all $x\in K$,
\[d_K(\nu\circ\tilde{\alpha}\circ\nu^{-1}(x),\tilde{\bt}(x))<\varepsilon\ \ {\rm and}\ \ d_K(\nu'\circ\tilde{\bt}\circ\nu'^{-1}(x),\tilde{\af}(x))<\varepsilon\]
Without loss of generality, we show the former. Since $(K\times\Omega, \tilde{\alpha}\times c_1)$ and $(K\times\Omega, \tilde{\bt}\times c_2)$ are weakly approximately conjugate, there is $N\geq1$ such that
\[d_{K\times\Omega}(\sigma_N\circ(\tilde{\af}\times c_1)\circ\sigma_N^{-1}(x,\omega), \tilde{\bt}\times c_2(x,\omega))<\varepsilon\]
for all $(x,\omega)\in K\times\Omega$. However, note that
\begin{align*}
&d_{K\times\Omega}(\sigma_N\circ(\tilde{\af}\times c_1)\circ\sigma_N^{-1}(x,\omega), \tilde{\bt}\times c_2(x,\omega))\\
=\,&d_{K\times\Omega}\left((\tilde{\sigma}_N\circ\tilde{\alpha}\circ\tilde{\sigma}_N^{-1}(x), (y_N)_{\tilde{\af}\circ\tilde{\sigma}_N^{-1}(x)}\circ(c_1)_{\tilde{\sigma}_N^{-1}(x)}\circ(y_N)_{\tilde{\sigma}_N^{-1}(x)}^{-1}(\omega)), (\tilde{\bt}(x), (c_2)_{x}(\omega))\right)\\
\geq\,&d_{K}(\tilde{\sigma}_N\circ\tilde{\alpha}\circ\tilde{\sigma}_N^{-1}(x), \tilde{\bt}(x))
\end{align*}
which follows $(2)$.
\vspace{0.1cm}

\noindent $(2)\Rightarrow (3)$: Assume that  $(K,\tilde{\af})$ and $(K,\tilde{\bt})$ are weakly approximately conjugate via $\tilde{\sigma}_n,\tilde{\gamma}_n$. Then it is clear that $(K\times\Omega,\tilde{\af}\times{\rm id})$ and $(K\times\Omega, \tilde{\bt}\times{\rm id)}$ are weakly approximately conjugate via $\tilde{\sigma}_n\times{\rm id},\tilde{\gamma}_n\times{\rm id}$. On the other hand, according to Lemma \ref{5.3}, $(K\times\Omega,\tilde{\af}\times{\rm id})$ is weakly approximately conjugate to $(K\times\Omega,\tilde{\af}\times c_1)$ via ${\rm id}\times g_n, {\rm id}\times h_n$, and $(K\times\Omega,\tilde{\bt}\times{\rm id})$ is weakly approximately conjugate to $(K\times\Omega,\tilde{\bt}\times c_2)$ via ${\rm id}\times g_n', {\rm id}\times h_n'$. Then by Proposition \ref{5.4}, we see that $(K\times\Omega, \tilde{\alpha}\times c_1)$ and $(K\times\Omega, \tilde{\bt}\times c_2)$ are weakly approximately conjugate via subsets
\[S_1\subset\{({\rm id}\times g'_l)\circ(\tilde{\sigma}_m\circ{\rm id})\circ({\rm id}\times h_n): l,m,n\geq1 \}\]
and
\[S_2\subset\{({\rm id}\times g_l)\circ(\tilde{\gamma}_m\circ{\rm id})\circ({\rm id}\times h'_n): l,m,n\geq1 \}\]
Since cocycles $g_n,g'_n,h_n,h'_n$ all take values in $G_\mathcal{L}$, they are quasi-trivial. 
\vspace{0.1cm}

\noindent $(3)\Rightarrow (1)$: This is immediate.
\end{proof}
Recall that for a dynamical system $(Z,\af)$, its {\it periodic spectrum} is defined by
\[\textstyle PS(\af)=\{p\in\mathbb{Z}_+: \exists\ {\rm clopen\ set}\ U\ {\rm with}\ \bigsqcup_{i=0}^{p-1}\af^i(U)=Z\}\]
\begin{cor}\label{6.11}
Keep the assumptions as in Lemma \ref{6.10}. Then $\af$ and $\bt$ are weakly approximately conjugate if and only if $PS(\af)=PS(\bt)$.
\end{cor}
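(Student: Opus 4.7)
The plan is to reduce the corollary to the analogous statement for Cantor minimal systems, which is essentially known from \cite{LM1}, via the reduction already carried out in Lemma \ref{6.10}.

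First I would record the elementary observation that the periodic spectrum of a skew product on $X=K\times\Omega$ agrees with the periodic spectrum of its Cantor factor, that is
\[
PS(\af)=PS(\taf)\andeqn PS(\bt)=PS(\tbt).
\]
Indeed, since $\Omega$ is connected and $K$ is totally disconnected, every clopen subset of $X$ has the form $V\times\Omega$ with $V$ clopen in $K$, and $\af(V\times\Omega)=\taf(V)\times c_1(V\times\Omega)\subset \taf(V)\times\Omega$. Hence a clopen partition of $X$ of the form $\bigsqcup_{i=0}^{p-1}\af^i(U)=X$ corresponds bijectively to a clopen partition $\bigsqcup_{i=0}^{p-1}\taf^i(V)=K$, which establishes the equality of periodic spectra.

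Next, Lemma \ref{6.10} tells us that weak approximate conjugacy of $(X,\af)$ and $(X,\bt)$ is equivalent to weak approximate conjugacy of the underlying Cantor minimal systems $(K,\taf)$ and $(K,\tbt)$. Therefore the corollary reduces to showing
\[
(K,\taf)\sim_{\rm app}(K,\tbt)\Longleftrightarrow PS(\taf)=PS(\tbt).
\]
This is precisely the Cantor case treated by H. Lin and H. Matui (see Theorem 4.3 of \cite{LM1}): for Cantor minimal systems, weak approximate conjugacy coincides with equality of periodic spectra. One direction is immediate, since $PS(\cdot)$ is a conjugacy invariant and the defining partitions of length $p\in PS(\taf)$ perturb continuously under small $C^0$-perturbations of $\taf$; the other direction uses Kakutani--Rohlin towers matching the periodic spectrum to build the conjugating homeomorphisms.

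Combining the two steps, $\af\sim_{\rm app}\bt$ is equivalent to $\taf\sim_{\rm app}\tbt$ by Lemma \ref{6.10}, which is equivalent to $PS(\taf)=PS(\tbt)$ by the cited Cantor result, which is equivalent to $PS(\af)=PS(\bt)$ by the first observation. The only conceptual obstacle is the Cantor equivalence itself, but this is already available in \cite{LM1}; the skew-product-to-base reduction is handled cleanly by Lemma \ref{6.10}.
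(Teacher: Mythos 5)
Your proposal is correct and follows exactly the same route as the paper: identify $PS(\af)=PS(\taf)$ using that clopen subsets of $K\times\Omega$ have the form $V\times\Omega$, reduce weak approximate conjugacy to the Cantor factors via Lemma \ref{6.10}, and invoke the Lin--Matui equivalence for Cantor minimal systems (the paper cites Theorem 4.13 of \cite{LM1} rather than Theorem 4.3, but this is only a reference-numbering discrepancy).
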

\begin{proof}
According to Lemma \ref{6.10}, $(X,\af)$ and $(X,\bt)$ are weakly approximately conjugate if and only if $(K,\tilde{\af})$ and $(K,\tilde{\bt})$ are weakly approximately conjugate. Since $\Omega$ is connected, every clopen set $U\subset X$ is of the form $O\times \Omega$ for some clopen set $O\subset K$, which follows that $PS(\af)=PS(\tilde{\af})$ and $PS(\bt)=PS(\tilde{\bt})$. Finally, according to Theorem 4.13 in \cite{LM1}, $PS(\tilde{\af})=PS(\tilde{\bt})$ if and only if $\tilde{\af}$ and $\tilde{\bt}$ are weakly approximately conjugate. This proves the corollary.
\end{proof}

The following proposition follows directly from Theorem \ref{5.13},
\begin{prop}\label{6.12}
Let $\af$ and $\bt$ be rigid minimal homeomorphisms of $X$. Then $(X,\af)$ and $(X,\bt)$ are approximately $K$-conjugate if and only if there exists a unital order isomorphism 
\[\kappa: (K_0(A_\af), K_0(A_\af)_+, [1_{A_\af}], K_1(A_\af))\to (K_0(A_\bt), K_0(A_\bt)_+, [1_{A_\bt}], K_1(A_\bt))\]
and two sequences of homeomorphisms $\chi_n,\chi'_n: X\to X$ such that $(X,\af)\sim_{\rm app}(X,\bt)$ via $\chi_n,\chi'_n$, and for every finitely-generated subgroups $G_i, F_i\subset K_i(C(X))$,
\[\kappa_i\circ(j_\af)_{*i}|_{G_i}=(j_{\bt}\circ\chi^{-1}_n)_{*i}|_ {G_i}\ \ {\rm and}\ \ \kappa_i^{-1}\circ(j_\bt)_{*i}|_{F_i}=(j_{\af}\circ\chi'^{-1}_n)_{*i}|_ {F_i}\]
for $i=0,1$ and all sufficiently large $n$.
\end{prop}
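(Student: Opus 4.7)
The proposition will follow directly from Theorem~\ref{5.13}; my plan is to extract both implications by matching the $K$-theoretic data produced by that theorem with the identities required here, and otherwise to do little more than translate notation.

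For the forward direction $(\Rightarrow)$, I would start from approximate $K$-conjugacy and invoke the implication $(1)\Rightarrow(3)$ of Theorem~\ref{5.13}. This yields a unital isomorphism $\theta\colon A_\alpha\to A_\beta$, sequences of unitaries $u_n\in A_\beta$, $w_n\in A_\alpha$, and homeomorphisms $\sigma_n,\gamma_n\colon X\to X$ with $(X,\alpha)\sim_{\rm app}(X,\beta)$ via $\sigma_n,\gamma_n$, satisfying $\|u_n\theta(j_\alpha(f))u_n^*-j_\beta(f\circ\gamma_n^{-1})\|\to 0$ and $\|w_n\theta^{-1}(j_\beta(f))w_n^*-j_\alpha(f\circ\sigma_n^{-1})\|\to 0$ for every $f\in C(X)$. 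Setting $\kappa=\theta_*$ produces a unital order isomorphism on $(K_0,K_0^+,[1],K_1)$. Taking $K$-theory of the two asymptotic identities, and using that on a finitely generated subgroup $G_i\subset K_i(C(X))$ the two sides eventually become exactly equal (projections sufficiently close in norm are homotopic and hence define the same class), I obtain the required equalities after identifying $\chi_n,\chi'_n$ with the appropriate one among $\sigma_n,\gamma_n$ (or their inverses) so that the convention $(j_\beta\circ\chi_n^{-1})_{*i} = (j_\beta)_{*i}\circ\bigl[f\mapsto f\circ\chi_n^{-1}\bigr]_{*i}$ matches the $\gamma_n^{-1}$-pullback appearing in Theorem~\ref{5.13}(3).

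For the reverse direction $(\Leftarrow)$, given $\kappa$ and $\chi_n,\chi'_n$ as stated, I would check the four conditions \eqref{e5.3}--\eqref{e5.4} of Theorem~\ref{5.13}(2) and then apply $(2)\Rightarrow(1)$. Setting $\sigma_n=\chi_n$ and $\gamma_n=\chi'_n$, the independence-of-$n,m$ identities in \eqref{e5.3} are immediate: both $(j_\beta\circ\chi_n^{-1})_{*i}|_{G_i}$ and $(j_\beta\circ\chi_m^{-1})_{*i}|_{G_i}$ coincide with the fixed map $\kappa_i\circ(j_\alpha)_{*i}|_{G_i}$, and similarly for the $\chi'$ side via $\kappa_i^{-1}$. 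For the cross-consistency identities in \eqref{e5.4}, I would compose the two given equalities: applying $\kappa_i^{-1}$ to the first and feeding the output into the second yields, on a fixed finitely generated subgroup, $(j_\alpha\circ({\chi'_m}^{-1}\circ\chi_n^{-1}))_{*i}(x)=(j_\alpha)_{*i}(x)$ for all sufficiently large $n,m$, which is precisely $(j_\alpha\circ(\gamma_m\circ\sigma_n)^*)_{*i}(x)=(j_\alpha)_{*i}(x)$; the symmetric statement follows in the same way. Then Theorem~\ref{5.13}$(2)\Rightarrow(1)$ delivers approximate $K$-conjugacy.

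The main obstacle I anticipate is not mathematical but notational: in the display $(j_\beta\circ\chi_n^{-1})_{*i}$ the symbol $\chi_n^{-1}$ has to be interpreted as the induced pullback on $K_*(C(X))$, and to match this with the pullback by $\gamma_n^{-1}$ that appears in Theorem~\ref{5.13}(3) one must pick the correct identification between $\chi_n,\chi'_n$ and $\sigma_n,\gamma_n$ (up to taking inverses) and verify that the resulting sequences still implement a bilateral approximate conjugacy in the sense of Definition~\ref{5.1}. Once that dictionary is fixed consistently in both directions, the statement of Proposition~\ref{6.12} is just a transcription of the content of Theorem~\ref{5.13}.
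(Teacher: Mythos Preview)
Your proposal is correct and matches the paper's approach exactly: the paper's entire proof of Proposition~\ref{6.12} is the single sentence ``follows directly from Theorem~\ref{5.13},'' and you have simply unpacked that citation in both directions. The notational bookkeeping you flag (matching $\chi_n,\chi'_n$ with $\sigma_n,\gamma_n$ up to inversion so that the pullback conventions and the roles in Definition~\ref{5.1} line up) is indeed the only point requiring care, and the paper glosses over it just as you do; your concrete assignment $\sigma_n=\chi_n$, $\gamma_n=\chi'_n$ in the backward direction needs the minor swap you already anticipate in your final paragraph, but once the dictionary is fixed the verification of \eqref{e5.3}--\eqref{e5.4} is immediate.
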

\begin{lem}\label{6.13}
Let $(K,\tilde{\alpha})$ and $(K,\tilde{\beta})$ be Cantor minimal systems. Let $\alpha=\tilde{\alpha}\times c_1\in\Raf$ and $\bt=\tilde{\bt}\times c_2\in\Rbt$ be two minimal rigid homeomorphisms of $X=K\times\Omega$. Then the approximate $K$-conjugacy of $(K,\taf)$ and $(K,\tbt)$ implies the approximate $K$-conjugacy of $(X,\af)$ and $(X,\bt)$.
\end{lem}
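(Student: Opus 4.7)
The plan is to promote the data witnessing the approximate $K$-conjugacy of the Cantor systems $(K,\taf)$ and $(K,\tbt)$ to data witnessing the approximate $K$-conjugacy of $(X,\af)$ and $(X,\bt)$ by taking a ``product with $\Omega$''. By Theorem \ref{5.13} applied to $\Omega'=\{\mathrm{pt}\}$, the hypothesis gives homeomorphisms $\tilde{\sigma}_n,\tilde{\gamma}_n:K\to K$ with $(K,\taf)\sim_{\rm app}(K,\tbt)$ via $\tilde{\sigma}_n,\tilde{\gamma}_n$, together with a unital order isomorphism $\tilde{\kappa}:(K^0(K,\taf),K^0(K,\taf)_+,[1])\to (K^0(K,\tbt),K^0(K,\tbt)_+,[1])$ which is compatible with $(j_{\taf}\circ \tilde{\sigma}_n^*)_{*0}$ and $(j_{\tbt}\circ\tilde{\gamma}_n^*)_{*0}$ on every finitely generated subgroup for large $n$.

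Using Remark \ref{4.7} (the cocycles $c_1,c_2$ act trivially on $K_i(C(\Omega))$) and Proposition \ref{3.3}, the $K$-groups of $A_\af$ and $A_\bt$ split as
\[K_0(A_\af)\cong K^0(K,\taf)\otimes K_0(C(\Omega))\andeqn K_1(A_\af)\cong K_0(C(\Omega)),\]
and similarly for $\bt$, since $K_1(C(\Omega))=0$. I would then simply set
\[\kappa_0=\tilde{\kappa}\otimes\id_{K_0(C(\Omega))}\andeqn \kappa_1=\id_{K_0(C(\Omega))}.\]
That $\kappa_0$ preserves the unit is immediate; that it preserves the positive cone uses the description of $K_0(A_\af)_+$ recalled just before Lemma \ref{6.3}, namely as classes of continuous functions $f:K\to K_0(C(\Omega))_+$, together with the fact that this positive cone is determined pointwise by $K_0(C(\Omega))_+$ while $\tilde{\kappa}$ transports the $\taf$-equivalence to the $\tbt$-equivalence.

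Next, I would feed the $\tilde{\sigma}_n,\tilde{\gamma}_n$ into the construction of Lemma \ref{6.10} ($(2)\Rightarrow(3)$) to obtain homeomorphisms $\chi_n=\tilde{\sigma}_{m(n)}\times z_n$ and $\chi'_n=\tilde{\gamma}_{m'(n)}\times z'_n$ of $X$ with quasi-trivial cocycles $z_n,z'_n$ such that $(X,\af)\sim_{\rm app}(X,\bt)$ via $\chi_n,\chi'_n$. Here I would choose the indices so that $m(n),m'(n)\to\infty$. Now for the $K$-theory compatibility required in Proposition \ref{6.12}, note that by the K\"unneth formula and $K_1(C(\Omega))=0=K_1(C(K))$, we have $K_1(C(X))=0$, so the condition on $K_1$ is vacuous. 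On $K_0(C(X))\cong C(K,\Z)\otimes K_0(C(\Omega))$, a map of the form $\chi=\tilde{\chi}\times z$ with $z$ quasi-trivial induces $(\tilde{\chi}^*)_{*0}\otimes\id$. Therefore, for $f\otimes a\in C(K,\Z)\otimes K_0(C(\Omega))$,
\[
\kappa_0\circ (j_\af)_{*0}(f\otimes a)=\tilde{\kappa}([f]_{\taf})\otimes a=(j_{\tbt}\circ \tilde{\sigma}_{m(n)}^{-1\,*})_{*0}(f)\otimes a=(j_\bt\circ\chi_n^{-1\,*})_{*0}(f\otimes a)
\]
for sufficiently large $n$, the second equality being exactly the $K$-theoretic compatibility supplied on the Cantor side. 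The same computation applies to $\chi'_n$ and $\kappa_0^{-1}$.

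The conclusion then follows from Proposition \ref{6.12}. The main obstacle I expect is the verification that $\kappa_0=\tilde{\kappa}\otimes\id$ is actually an order isomorphism: one needs that the positive cone of $K_0(A_\af)$ really is the image of $\{f\in C(K,K_0(C(\Omega))):f(K)\sbs K_0(C(\Omega))_+\}$ under the canonical surjection onto $K^0(K,\taf)\otimes K_0(C(\Omega))$, and that this characterization is preserved under $\tilde{\kappa}\otimes\id$. This is clean when $K_0(C(\Omega))$ is totally ordered, but for general torsion-free $K_0(C(\Omega))$ one must work pointwise and use that $\tilde{\kappa}$ is already an order isomorphism (so positivity is witnessed tower-by-tower by nonnegative integer-valued functions), a feature that is special to the Cantor base.
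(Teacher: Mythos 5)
Your proposal is correct and follows essentially the same route as the paper: obtain the unital order isomorphism on the Cantor level (the paper cites Theorem 5.4 of \cite{LM1}, which by Remark \ref{5.14} is the same data you extract from Theorem \ref{5.13} at $\Omega=\{\mathrm{pt}\}$), set $\kappa_0=\Theta\otimes\id_{K_0(C(\Omega))}$ and $\kappa_1=\id_{K_0(C(\Omega))}$, lift the conjugating maps to $X$ with quasi-trivial cocycles via Lemma \ref{6.10}, check the intertwining on elementary tensors $F_s\otimes p_s$, and close with Proposition \ref{6.12}. The positivity point you flag at the end is real but resolves exactly as you sketch, using the pointwise description of $K_0(A_\af)_+$ and the local constancy of functions $K\to K_0(C(\Omega))$; the paper treats this as clear.
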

\begin{proof}
Suppose $(K,\tilde{\af})$ and $(K,\tilde{\bt})$ are approximately $K$-conjugate via $\tilde{\sigma}_n$ and $\tilde{\gamma}_n$. Hence they are weakly approximately conjugate via $\tilde{\sigma}_n$ and $\tilde{\gamma}_n$. According to Lemma \ref{6.10}, $(K\times\Omega,\tilde{\af}\times c_1)$ and $(K\times\Omega,\tilde{\bt}\times c_2)$ are weakly approximately conjugate via $\sigma_n,\gamma_n$, which has the form
\[\sigma_n=\tilde{\sigma}_n\times g_n\ \ {\rm and}\ \ \gamma_n=\tilde{\gamma}_n\times h_n\]
with the cocycles $g_n$ and $h_n$ taking values in $G_\mathcal{L}$. We assert that  $(K\times\Omega,\tilde{\af}\times c_1)$ and $(K\times\Omega,\tilde{\bt}\times c_2)$ are actually approximately $K$-conjugate via $\sigma_n$ and $\gamma_n$. 

Since $(K,\tilde{\af})$ and $(K,\tilde{\bt})$ are approximately $K$-conjugate via $\tilde{\sigma}_n$ and $\tilde{\gamma}_n$, it follows from Theorem 5.4 of \cite{LM1} that there is a unital order isomorphism
\[\Theta: (K^0(K,\tilde{\af}), K^0(K,\tilde{\af})_+, [1_{A_{\tilde{\af}}}])\to (K^0(K,\tilde{\bt}), K^0(K,\tilde{\bt})_+, [1_{A_{\tilde{\bt}}}])\]
such that for every $p\in C(K,\mathbb{Z})$ there is $N\geq1$ with
\[\Theta\circ(j_{\tilde{\af}})_{*0}(p)=(j_{\tilde{\bt}})_{*0}(p\circ \tilde{\gamma}_n^{-1}),\ \ {\rm and}\ \ \Theta^{-1}\circ(j_{\tilde{\bt}})_{*0}(p)=(j_{\tilde{\af}})_{*0}(p\circ \tilde{\sigma}_n^{-1})\]
for all $n\geq N$. For $i=0,1$, we define $\kappa_i: K_i(A_{\af})\to K_i(A_{\bt})$ by $\kappa_0=\Theta\otimes {\rm id}_{K_0(C(\Omega))}$ and $\kappa_1={\rm id}_{K_0(C(\Omega))}$.
Then it is clear that 
\[\kappa=(\kappa_0,\kappa_1): (K_0(A_\af), K_0(A_\af)_+, [1_{A_\af}], K_1(A_\af))\to (K_0(A_\bt), K_0(A_\bt)_+, [1_{A_\bt}], K_1(A_\af))\]
defines an isomorphism. We now assert that for every finitely-generated subgroups $G_i, F_i\subset K_i(C(X))$,
$\kappa_i\circ(j_\af)_{*i}|_{G_i}=(j_{\bt}\circ\gamma^{-1}_n)_{*i}|_ {G_i}$ and $\kappa_i^{-1}\circ(j_\bt)_{*i}|_{F_i}=(j_{\af}\circ\sigma^{-1}_n)_{*i}|_ {F_i}$.
for $i=0,1$ and all sufficiently large $n$. It suffices to show that, for every $x\in K_i(C(X))$, there is $N\geq1$ such that
\[\kappa_i\circ(j_\af)_{*i}(x)=(j_\bt\circ\gamma_n^{-1})_{*i}(x)\ \ {\rm and}\ \ \kappa_{i}^{-1}\circ(j_\bt)_{*i}(x)=(j_\af\circ\gamma_n^{-1})_{*i}(x)\]
for $i=0,1$ and all $n\geq N$. Since $K_1(C(X))=0$, $(j_\af)_{*1}=(j_\bt)_{*1}=0$, and we only need to show the case of $i=0$. Without loss of generality, we show the former.
\vspace{0.1cm}

Let $x\in K_0(C(X))$. Write $x=\sum_{s\in S}F_s\otimes p_s$, 
where $S$ is a finite set, $F_s\in C(K,\mathbb{Z})$ and $p_s\in K_0(C(\Omega))$ for all $s\in S$. Then we have
\[(j_\af)_{*0}(x)=\left[\sum_{s\in S}F_s\otimes p_s\right]_\af=\sum_{s\in S}[F_s]_{\tilde{\af}}\otimes p_s\]
 According to the definition of $\kappa_0$, we have $\kappa_0\circ(j_\af)_{*0}(x)=\sum_{s\in S}\Theta([F_s]_{\tilde{\af}})\otimes p_s$.
On the other hand, since $\gamma_n=\tilde{\gamma}_n\times h_n$ and $h_n$ are quasi-trivial, we have
\[(j_\bt\circ\gamma_n^{-1})_{*0}(x)=\sum_{s\in S}([F_s\circ\tilde{\gamma}_n^{-1}]_{\tilde{\bt}})\otimes p_s\]
According to the assumption, for every $s\in S$, there is $N_s\geq1$ such that $\Theta([F_s]_{\tilde{\af}})=[F_s\circ\tilde{\gamma}_n^{-1}]_{\tilde{\bt}}$.
Since $S$ is a finite set, let $N=\max\{N_s: s\in S\}$. This follows that 
$\kappa_0\circ(j_\af)_{*0}(x)=(j_\bt\circ\gamma_n^{-1})_{*0}(x)$ for all $n\geq N$.  Finally, according to Proposition \ref{6.12}, $(K\times\Omega,\tilde{\af}\times c_1)$ and $(K,\times\Omega,\tilde{\bt}\times c_2)$ are approximately $K$-conjugate via $\sigma_n,\gamma_n$, whose cocycles take values in $G_\mathcal{L}$ (which are quasi-trivial). This completes the proof.
\end{proof}
\begin{lem}\label{6.14}
Let $(K,\tilde{\alpha})$ and $(K,\tilde{\beta})$ be Cantor minimal systems. Let $\alpha=\tilde{\alpha}\times c_1\in\Raf$ and $\bt=\tilde{\bt}\times c_2\in\Rbt$ be two minimal rigid homeomorphisms of $X=K\times\Omega$. If there is a unital order isomorphism
\[\kappa: (K_0(A_\alpha), K_0(A_\alpha)_+, [1_{A_\alpha}], K_1(A_\alpha))\to(K_0(A_\bt), K_0(A_\bt)_+, [1_{A_\bt}], K_1(A_\bt))\]
and a homeomorphism $\chi: X\to X$ such that $\kappa_0\circ (j_\af)_{*0}=(j_\bt\circ\chi^{-1})_{*0}$, then there exist unital order isomorphisms $\vartheta': C(K,\mathbb{Z})\to C(K,\mathbb{Z})$ and $\Theta': K^0(K,\tilde{\af})\to K^0(K,\tilde{\bt})$, 
unital order injective homomorphisms
\begin{align*}
\cdot{\bf 1}_{K_0(C(\Omega))}: C(K,\mathbb{Z})&\to K_0(C(X)),\\
\imath'_{\tilde{\af}}: K^0(K,\tilde{\af})&\to K_0(A_\af)\ {\rm and},\\
\imath'_{\tilde{\bt}}: K^0(K,\tilde{\bt})&\to K_0(A_\bt)
\end{align*}
such that every cell of the following diagram is commutative. 
$$\xymatrix@C=0.5cm@R=1.3cm{ & C(K,\mathbb{Z})\ar[rr]^{\vartheta'}\ar[ld]_{\cdot{\bf 1}_{K_0(C(\Omega))}}\ar@{-}[d]^<<<<<<<<{[\cdot]_{\tilde{\af}}} & & C(K,\mathbb{Z})\ar[dd]^<<<<<<<<<{[\cdot]_{\tilde{\bt}}}\ar[ld]_{\cdot{\bf 1}_{K_0(C(\Omega))}} \\
K_0(C(X)) \ar[rr]^{\ \ \ \ \ \ \ (\chi^*)_{*0}}\ar[dd]_<<<<<<<<<<<<<{(j_\af)_{*0}} & \ar[d] & K_0(C(X)) \ar[dd]^<<<<<<<<<<<<<{(j_\bt)_{*0}} \\
& K^0(K,\tilde{\af}) \ar[rr]^<<<<<<<<<{\Theta'} \ar[ld]_{\imath'_{\tilde{\af}}} && K^0(K,\tilde{\bt})\ar[ld]_{\imath'_{\tilde{\bt}}} \\
K_0(A_\af)\ar[rr]^{\ \ \kappa_0} & & K_0(A_\bt)}$$
\end{lem}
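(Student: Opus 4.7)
The plan is to read $\vartheta'$ and $\Theta'$ off the base homeomorphism of $K$ induced by $\chi$, and then to bootstrap well-definedness and commutativity from the hypothesis by exploiting the injectivity of the auxiliary maps $\imath'_{\tilde\alpha}, \imath'_{\tilde\beta}$. I would take $\imath'_{\tilde\alpha}$ and $\imath'_{\tilde\beta}$ to be the maps $\imath_{\tilde\alpha}, \imath_{\tilde\beta}$ of Lemma \ref{6.6}: these are unital, positive, and, crucially, injective. The map $\cdot{\bf 1}_{K_0(C(\Omega))}: C(K,\mathbb{Z})\to K_0(C(X))$ sends $f$ to $x\mapsto f(x){\bf 1}_{K_0(C(\Omega))}$. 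Since $\Omega$ is a connected finite CW-complex and $K_0(C(\Omega))$ is torsion free, the rank map splits the inclusion $\mathbb{Z}\cdot{\bf 1}\hookrightarrow K_0(C(\Omega))$; combined with the K\"unneth isomorphism $K_0(C(X))\cong C(K,\mathbb{Z})\otimes K_0(C(\Omega))$ (which uses $K_1(C(\Omega))=0$), this gives a unital, positive, injective homomorphism. The left and right triangles $(j_\alpha)_{*0}\circ(\cdot{\bf 1}_{K_0(C(\Omega))})=\imath'_{\tilde\alpha}\circ[\cdot]_{\tilde\alpha}$ and its analogue for $\tilde\beta$ are precisely the defining identities from Lemma \ref{6.6}.

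Because $K$ is totally disconnected and $\Omega$ is connected, $\chi:X\to X$ splits as $\chi(x,\omega)=(\tilde\chi(x),\chi_x(\omega))$ for some $\tilde\chi\in\Homeo(K)$ and continuous $\chi_\cdot: K\to\Homeo(\Omega)$. A direct computation on characteristic functions of clopen boxes $U\times\Omega$ shows that $(\chi^*)_{*0}$ preserves the subgroup $\cdot{\bf 1}_{K_0(C(\Omega))}(C(K,\mathbb{Z}))$ and acts on it by pull-back along $\tilde\chi^{-1}$. This forces the definition $\vartheta'(f):=f\circ\tilde\chi^{-1}$, which is visibly a unital order automorphism of $C(K,\mathbb{Z})$ and makes the top-front face $(\chi^*)_{*0}\circ(\cdot{\bf 1}_{K_0(C(\Omega))})=(\cdot{\bf 1}_{K_0(C(\Omega))})\circ\vartheta'$ commute by construction.

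Now set $\Theta'([f]_{\tilde\alpha}):=[\vartheta'(f)]_{\tilde\beta}$. Well-definedness is the decisive step and proceeds exactly as in Lemma \ref{6.8}: if $[f]_{\tilde\alpha}=[g]_{\tilde\alpha}$, the left triangle gives $\imath'_{\tilde\alpha}([f]_{\tilde\alpha})=(j_\alpha)_{*0}(f\cdot{\bf 1})=(j_\alpha)_{*0}(g\cdot{\bf 1})=\imath'_{\tilde\alpha}([g]_{\tilde\alpha})$; applying $\kappa_0$, then the hypothesis rewritten as $\kappa_0\circ(j_\alpha)_{*0}=(j_\beta)_{*0}\circ(\chi^*)_{*0}$, then the top-front face, then the right triangle, yields $\imath'_{\tilde\beta}([\vartheta'(f)]_{\tilde\beta})=\imath'_{\tilde\beta}([\vartheta'(g)]_{\tilde\beta})$; injectivity of $\imath'_{\tilde\beta}$ (Lemma \ref{6.6}) then forces $[\vartheta'(f)]_{\tilde\beta}=[\vartheta'(g)]_{\tilde\beta}$. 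Running the same argument with $\chi^{-1}$ and $\kappa_0^{-1}$ produces a two-sided inverse, so $\Theta'$ is a unital order isomorphism, with positivity and unitality inherited from $\vartheta'$. The back face $\kappa_0\circ\imath'_{\tilde\alpha}=\imath'_{\tilde\beta}\circ\Theta'$ then follows immediately by unraveling the same chain of identities on generators of the form $\imath'_{\tilde\alpha}([f]_{\tilde\alpha})$.

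The main conceptual obstacle is that the hypothesis is stated at the level of $K_0(C(X))$, whereas $\Theta'$ must be produced at the base level $K^0(K,\tilde\alpha)$, which is not a direct quotient of $K_0(C(X))$ through $(j_\alpha)_{*0}$. The only way to transfer the information across is through the injectivity of $\imath'_{\tilde\alpha}$ and $\imath'_{\tilde\beta}$, which in turn rests crucially on the torsion-freeness of $K_0(C(\Omega))$ (via Lemma \ref{6.5}). A minor secondary subtlety is aligning the sign convention for $\chi^*$ so that the hypothesis $\kappa_0\circ(j_\alpha)_{*0}=(j_\beta\circ\chi^{-1})_{*0}$ matches the bottom face $\kappa_0\circ(j_\alpha)_{*0}=(j_\beta)_{*0}\circ(\chi^*)_{*0}$ of the prism; this is purely notational and has no mathematical content.
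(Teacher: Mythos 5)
Your proposal is correct and follows essentially the same route as the paper: decompose $\chi=\tilde\chi\times e$ to define $\vartheta'(f)=f\circ\tilde\chi^{-1}$ from the action of $(\chi^*)_{*0}$ on the classes ${\bf 1}_{K_0(C(\Omega))}|_O$, reuse the maps of Lemma \ref{6.6} for $\imath'_{\tilde\alpha},\imath'_{\tilde\beta}$, and obtain $\Theta'([f]_{\tilde\alpha})=[\vartheta'(f)]_{\tilde\beta}$ with well-definedness and bijectivity forced by the injectivity of $\imath'_{\tilde\beta}$ (resting on torsion-freeness of $K_0(C(\Omega))$) together with the hypothesis $\kappa_0\circ(j_\alpha)_{*0}=(j_\beta\circ\chi^{-1})_{*0}$. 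The only cosmetic difference is that the paper defines $\Theta'$ as ${\imath'_{\tilde\beta}}^{-1}\circ\kappa_0\circ\imath'_{\tilde\alpha}$ after checking the image lands in $\imath'_{\tilde\beta}(K^0(K,\tilde\beta))$, whereas you define it by the explicit formula and verify well-definedness; these are the same argument.
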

\begin{proof}
Let $f\in C(K,\mathbb{Z})$. Define $\cdot{\bf 1}_{K_0(C(\Omega))}(f)\in K_0(C(X))$ by
\[\cdot{\bf 1}_{K_0(C(\Omega))}(f)(x)=f(x){\bf 1}_{K_0(C(\Omega))}\]
It is clear that $\cdot{\bf 1}_{K_0(C(\Omega))}$ is a unital order homomorphism. 
We now define a unital order isomorphism $\vartheta': C(K,\mathbb{Z})\to C(K,\mathbb{Z})$ such that
\[(\chi^*)_{*0}\circ \cdot{\bf 1}_{K_0(C(\Omega))}(f)=\cdot{\bf 1}_{K_0(C(\Omega))}\circ \vartheta'(f)\]
 for all $f\in C(K,\mathbb{Z})$. Note that for every $f\in C(K,\mathbb{Z})$, there exists $a_1,a_2,\cdots,a_m\in\mathbb{Z}$ and clopen subsets $O_1,O_2,\cdots,O_m\subset K$ for some $m\geq1$ such that $f=\sum_{1\leq i\leq m}a_i\cdot1_O$ where $1_O$ denotes the characteristic function on $O$. Therefore it suffices to define $\vartheta'(1_O)$ such that
\[(\chi^*)_{*0}\circ \cdot{\bf 1}_{K_0(C(\Omega))}(1_O)=\cdot{\bf 1}_{K_0(C(\Omega))}\circ \vartheta'(1_O)\]
for every clopen set $O\subset K$. Write $\cdot{\bf 1}_{K_0(C(\Omega))}(1_O)={\bf 1}_{K_0(C(\Omega))}|_O$ and $\chi=\tilde{\chi}\times e$ where $\tilde{\chi}$ is a homeomorphism of $K$ and $e: K\to \mathcal{H}(\Omega)$ is the continuous cocycle of $\chi$. Note that $(\chi^*)_{*0}({\bf 1}_{K_0(C(\Omega))}|_O)=(\chi^*)_{*0}[1_{O\times\Omega}]=[\chi^*(1_{O\times\Omega})]=[1_{\tilde{\chi}(O)\times\Omega}]={\bf 1}_{K_0(C(\Omega))}|_{\tilde{\chi}(O)}$,
hence we define 
\[\vartheta'(1_O)=1_{\tilde{\chi}(O)}.\]
This gives the commutative diagram of the upper cell. It is clear that $\vartheta'$ defines a unital order isomorphism of $C(K,\mathbb{Z})$ onto itself. Now we define homomorphisms 
\[\imath'_{\tilde{\af}}: K^0(K,\tilde{\af})\to K_0(A_\af)\ \ {\rm and}\ \ \imath'_{\tilde{\bt}}: K^0(K,\tilde{\bt})\to K_0(A_\bt)\]
Let $[f]_{\tilde{\af}}\in K^0(K,\tilde{\af})$ and define
\[\imath'_{\tilde{\af}}([f]_{\tilde{\af}})=(j_\af)_{*0}\circ\cdot{\bf 1}_{K_0(C(\Omega))}(f)\]
Assume $f,g\in C(K,\mathbb{Z})$ such that $f-g=h-h\circ\tilde{\af}^{-1}$ for some $h\in C(K,\mathbb{Z})$. This follows that 
\[\cdot{\bf 1}_{K_0(C(\Omega))}(f)(x)-\cdot{\bf 1}_{K_0(C(\Omega))}(g)(x)=h(x){\bf 1}_{K_0(C(\Omega))}-h(\tilde{\af}^{-1}(x)){\bf 1}_{K_0(C(\Omega))}\]
that is, $\cdot{\bf 1}_{K_0(C(\Omega))}(f)-\cdot{\bf 1}_{K_0(C(\Omega))}(g)=\cdot{\bf 1}_{K_0(C(\Omega))}(h)-\cdot{\bf 1}_{K_0(C(\Omega))}(h)\circ\tilde{\af}^{-1}$, which shows that $\imath'_{\tilde{\af}}$ is well-defined. By applying the same argument to Lemma \ref{6.6}, one checks that $\imath'_{\tilde{\af}}$ is unital order and injective, and this gives the commutativity of the left cell. We define $\imath'_{\tilde{\bt}}$ in a similar manner to $\imath'_{\tilde{\af}}$. Now it remains to define a unital order isomorphism 
\[\Theta': K^0(K,\tilde{\af})\to K^0(K,\tilde{\bt})\]
such that $\imath'_{\tilde{\bt}}\circ\Theta'=\kappa_0\circ\imath'_{\tilde{\af}}$ and $\Theta'\circ[\cdot]_{\tilde{\af}}=[\cdot]_{\tilde{\bt}}\circ \vartheta'$. 
For $[f]_{\tilde{\af}}\in K^0(K,\tilde{\af})$, we now first show that 
\[\kappa_0\circ\imath'_{\tilde{\af}}([f]_{\tilde{\af}})\in \imath'_{\tilde{\bt}}(K^0(K,\tilde{\bt}))\]
Note that according to the commutativity of the other four cells, one has 
\begin{align*}
&\kappa_0\circ\imath'_{\tilde{\af}}([f]_{\tilde{\af}})\\
=\,&\kappa_0\circ(j_\af)_{*0}\circ\cdot{\bf 1}_{K_0(C(\Omega))}(f)\\
=\,&(j_\bt)_{*0}\circ(\chi^*)_{*0}\circ\cdot{\bf 1}_{K_0(C(\Omega))}(f)\\
=\,&(j_\bt)_{*0}\circ\cdot{\bf 1}_{K_0(C(\Omega))}\circ\vartheta'(f)\\
=\,&\imath'_{\tilde{\bt}}\circ[\vartheta'(f)]_{\tilde{\bt}}
\end{align*}
which shows that $\kappa_0\circ\imath'_{\tilde{\af}}([f]_{\tilde{\af}})\in \imath'_{\tilde{\bt}}(K^0(K,\tilde{\bt}))$. Since $\imath'_{\tilde{\bt}}$ is injective, we define
\[\Theta'([f]_{\tilde{\af}})={\imath'_{\tilde{\bt}}}^{-1}\circ\kappa_0\circ\imath'_{\tilde{\af}}([f]_{\tilde{\af}})=[\vartheta'(f)]_{\tilde{\bt}}\]
Also note that the above equation implies that $\Theta'$ is well-defined which makes the other two cells commutative. Now it suffices to show $\Theta'$ is a unital order isomorphism. Since $\vartheta'$ and $[\cdot]_{\tilde{\bt}}$ is unital and order, so is $\Theta'$. To see that $\Theta'$ is an isomorphism, note that from the definition of $\Theta'([f]_{\tilde{\af}})$,  we have
\[\Theta'([f]_{\tilde{\af}})={\imath'_{\tilde{\bt}}}^{-1}\circ\kappa_0\circ\imath'_{\tilde{\af}}([f]_{\tilde{\af}})\]
Since $\imath'_{\tilde{\af}}, \imath'_{\tilde{\bt}}$ and $\kappa_0$ are injective, so is $\Theta'$. Now let $[g]_{\tilde{\bt}}\in K^0(K,\tilde{\bt})$ be an arbitrary element. A similar argument yields that $\kappa_0^{-1}\circ\imath'_{\tilde{\bt}}([g]_{\tilde{\bt}})\in \imath'_{\tilde{\af}}(K^0(K,\tilde{\af}))$.
Let 
\[[f]_{\tilde{\af}}={\imath'_{\tilde{\af}}}^{-1}\circ\kappa_0^{-1}\circ\imath'_{\tilde{\bt}}([g]_{\tilde{\bt}})\]
Then $\Theta'([f]_{\tilde{\af}})=[g]_{\tilde{\bt}}$, 
therefore $\Theta'$ is surjective. This completes the proof.
\end{proof}
{\bf Proof of Theorem \ref{6.1}.} 
\begin{proof}
$(1)\Rightarrow (2)$: This follows directly from Theorem \ref{5.13};

$(2)\Rightarrow (1)$: According to Corollary \ref{6.9}, the $C^*$-strongly-approximate conjugacy of $(X,\af)$ and $(X,\bt)$ implies the approximate $K$-conjugacy of $(K,\taf)$ and $(K,\tbt)$, which follows the approximate $K$-conjugacy of $(X,\af)$ and $(X,\bt)$ by Lemma \ref{6.13};

$(1)\Rightarrow (3)$: Note that according to the implication $(1)\Rightarrow(2)$ and the the proof of Lemma \ref{6.13}, we can assume that $(X,\af)$ and $(X,\bt)$ are approximate $K$-conjugacy via conjugate maps whose cocyles are quasi-trivial. This follows that the unital order isomorphism 
\[\kappa:  (K_0(A_\alpha), K_0(A_\alpha)_+, [1_{A_\alpha}], K_1(A_\alpha))\to(K_0(A_\bt), K_0(A_\bt)_+, [1_{A_\bt}], K_1(A_\bt))\]
has the form $\kappa=\Theta\otimes\id$, where $\Theta: K^0(K,\taf)\to K^0(K,\tbt)$ is the map defined in Lemma \ref{6.8}. On the other hand, by Lemma \ref{6.7},  there is an order unital isomorphism $\vartheta: C(K,\Z)\to C(K,\Z)$ such that $\kappa_0\circ\jmath_{\af}=\jmath_{\bt}\circ\vartheta$. Since $C(K)$ is a unital AF-algebra, there is a homeomorphism $\tilde{\chi}: K\to K$ which induces $\vartheta$. Define $\chi=\tilde{\chi}\times\id$. This gives $(3)$;

$(3)\Rightarrow (1)$: By Lemma \ref{6.14} and Theorem 5.4 in \cite{LM1}, $(K,\taf)$ and $(K,\tbt)$ are approximately $K$-conjugate. By Lemma \ref{6.13}, $(X,\af)$ and $(X,\bt)$ are approximate $K$-conjugate.
\end{proof}

\section{The case of nontrivial $K_1$-group}\label{sec:7}
Let $K$ be the Cantor space and $\Omega$ be a compact connected finite CW-complex which satisfies the LMP. Let $\af: K\times\Omega\to K\times\Omega$ be a minimal homeomorphism in $\Raf$ and $A_{\{x_0\}}\subset A_\af$ be an orbit-breaking subalgebra as defined in Sect.~\ref{sec:3}. In this section, we drop the condition that $K_1(C(\Omega))=0$, and keep the assumption that $K_i(C(\Omega))\,(i=0,1)$ are torsion free (note that when $\Omega$ is a finite CW-complex, then $K_i(C(\Omega))\,(i=0,1)$ is always finitely-generated). Recall that according to Proposition \ref{3.3}, we have, in this situation, 
\[K_i(A_{\{x_0\}})\cong K^0(K,\tilde{\af})\otimes K_i(C(\Omega))\ \ {\rm and}\ \ K_i(A_\af)\cong K_i(A_{\{x_0\}})\oplus K_{i+1}(C(\Omega))\]
Besides, the natural embedding
\[j_1: C(K\times\Omega)\hookrightarrow A_{\{x_0\}}\]
induces a surjective unital order homomorphism of $K_0$-groups:
\[(j_1)_{*0}: C(K,\mathbb{Z})\otimes K_0(C(\Omega))\cong K_0(C(K\times\Omega))\to K_0(A_{\{x_0\}})\cong K^0(K,\tilde{\af})\otimes K_0(C(\Omega))\]
Through this section, denote $X=K\times\Omega$. Note that according to the proof of Theorem \ref{3.7}, we have the following Lemma.
\begin{lem}\label{7.1}
For any finite subset $\mathcal{F}\subset A_\af$ and $\varepsilon>0$, there exists a projection $p\in A_{\{x_0\}}$ such that 

\noindent (1) For every $a\in \mathcal{F}$, $ap\in_{\varepsilon} A_{\{x_0\}}$;

\noindent (2) $\tau(1-p)<\varepsilon$ for every tracial state $\tau\in T(A_{\{x_0\}})$.
\end{lem}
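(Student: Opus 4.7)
The strategy is to directly adapt the almost-tracially-$AF$ construction from the proof of Theorem \ref{3.7}, preceded by a single norm approximation step to accommodate an arbitrary finite subset of $A_\af$ rather than a finite subset of $E_N = \{\sum_{k=-N}^N f_k u_\af^k : f_k \in C(X)\}$.

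Given $\varepsilon > 0$ and $\mathcal{F} = \{a_1,\dots,a_m\} \subset A_\af$, I would first use density of $\bigcup_{N\ge 1} E_N$ in $A_\af$ to choose an integer $N \geq 1$ and elements $b_1,\dots,b_m \in E_N$ with $\|a_i - b_i\| < \varepsilon/2$ for every $i$. Next, invoking minimality of $\taf$, a standard Kakutani-Rohlin tower argument produces a clopen neighborhood $U \subset K$ of $x_0$ whose iterates $\taf^{-N}(U),\dots,\taf^N(U)$ are pairwise disjoint and such that
\[
\mu(U) \;<\; \frac{\varepsilon}{2N+1}
\]
for every $\taf$-invariant Borel probability measure $\mu$ on $K$; this is achieved by taking $U$ inside the level through $x_0$ of a Kakutani-Rohlin partition whose tower heights all exceed $(2N+1)/\varepsilon$ and in which $x_0$ sits at a level at least $N+1$ from top and bottom. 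Setting $V = \bigcup_{k=-N}^N \taf^k(U)$ and $p = 1_{(K \setminus V) \times \Omega} \in C(X) \subset A_{\{x_0\}}$, the computation from the proof of Theorem \ref{3.7} (verified via the matrix-unit structure of Theorem \ref{3.1}) shows $u_\af^k p \in A_{\{x_0\}}$ for every $|k| \leq N$, and hence $b_i p \in A_{\{x_0\}}$ for every $i$. Since $\|a_i p - b_i p\| \le \|a_i - b_i\| < \varepsilon/2 < \varepsilon$, condition (1) is verified.

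For condition (2), Proposition \ref{3.3} provides an affine homeomorphism $T(A_\af) \cong T(A_{\{x_0\}})$, so each $\tau \in T(A_{\{x_0\}})$ lifts uniquely to a $\tilde\tau \in T(A_\af)$ whose restriction to $C(X)$ is an $\af$-invariant probability measure $\nu$ on $X$ with $\taf$-invariant $K$-marginal $\mu$; therefore
\[
\tau(1-p) \;=\; \tilde\tau(1_{V \times \Omega}) \;=\; \nu(V \times \Omega) \;=\; \mu(V) \;\leq\; (2N+1)\,\mu(U) \;<\; \varepsilon.
\]
I do not anticipate any genuine obstacle: the argument is an essentially routine adaptation of the construction already used in Theorem \ref{3.7}. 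The only point requiring modest care is coordinating the approximation index $N$ with the tower heights in the Kakutani-Rohlin partition so that the measure estimate is simultaneously uniform over all $\taf$-invariant probability measures, but this is standard for Cantor minimal systems.
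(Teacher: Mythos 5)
Your proposal is correct and is essentially the paper's own argument: the paper derives Lemma \ref{7.1} directly from the proof of Theorem \ref{3.7}, which uses exactly the same clopen tower $V=\bigcup_k \taf^k(U)$ around $x_0$, the projection $p=1_{(K\setminus V)\times\Omega}$, and the identification $T(A_\af)\cong T(A_{\{x_0\}})$ for the trace estimate. The only difference is that you make explicit the preliminary density approximation of $\mathcal{F}$ by elements of $E_N$, which the paper leaves implicit.
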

\begin{lem}\label{7.2}
Assume $x_0\in K$ and ${\rm RR}(A_{\{x_0\}})=0$. Then for every $\varepsilon>0$ and $a\in (A_\af)_{s.a.}$, there exists projections $q_1,\cdots,q_N\in A_{\{x_0\}}$ and complex numbers $\lambda'_1, \cdots,\lambda'_N$ such that 
\[\bigg|\tau(a)-\sum_{i=1}^N\lambda'_i\tau(q_i)\bigg|<\varepsilon\]
for all $\tau\in T(A_\af)$.
\end{lem}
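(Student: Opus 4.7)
The plan is to localize the element $a$ into the subalgebra $A_{\{x_0\}}$ using Lemma \ref{7.1}, incurring only a small trace-error, and then appeal to $RR(A_{\{x_0\}})=0$ to write the localized piece as a finite real-linear combination of projections.

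First I would fix $a \in (A_\af)_{s.a.}$ and $\varepsilon>0$, and choose $\delta>0$ small enough so that $(\|a\|+2)\delta<\varepsilon$. Apply Lemma \ref{7.1} to the singleton $\F=\{a\}$ with tolerance $\delta$ to obtain a projection $p\in A_{\{x_0\}}$ and an element $b\in A_{\{x_0\}}$ such that $\|ap-b\|<\delta$ and $\tau(1-p)<\delta$ for all $\tau\in T(A_{\{x_0\}})$. Taking adjoints gives $\|pa-b^*\|<\delta$, and then $\|pap - pb\|<\delta$ while $\|pap-b^*p\|<\delta$. Since $pap$ is self-adjoint, the element $\tilde b := (pb+b^*p)/2 \in A_{\{x_0\}}$ is self-adjoint and satisfies $\|pap-\tilde b\|<\delta$.

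Next I would control $\tau(a) - \tau(pap)$ uniformly over $\tau\in T(A_\af)$. Since $\tau$ is a trace,
\begin{equation*}
\tau(a) \;=\; \tau(pap) + \tau\bigl((1-p)a(1-p)\bigr),
\end{equation*}
and by Cauchy--Schwarz (or the crude bound $|\tau(xy)|\leq\|x\|\tau(y)$ for $y\geq0$),
\begin{equation*}
\bigl|\tau\bigl((1-p)a(1-p)\bigr)\bigr| \;\leq\; \|a\|\,\tau(1-p).
\end{equation*}
By Proposition \ref{3.3}, the restriction map $T(A_\af)\to T(A_{\{x_0\}})$ is an affine homeomorphism, so $\tau(1-p)<\delta$ for every $\tau\in T(A_\af)$ as well. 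Hence $|\tau(a)-\tau(pap)|<\|a\|\delta$, and combined with $\|pap-\tilde b\|<\delta$ we get $|\tau(a)-\tau(\tilde b)|<(\|a\|+1)\delta$ for all $\tau\in T(A_\af)$.

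Finally, since $RR(A_{\{x_0\}})=0$, the self-adjoint elements with finite spectrum are dense in $(A_{\{x_0\}})_{s.a.}$ (Theorem 3.2.5 / 3.25 of \cite{L1}). I would choose mutually orthogonal projections $q_1,\dots,q_N\in A_{\{x_0\}}$ and real numbers $\lambda_1',\dots,\lambda_N'$ with $\|\tilde b-\sum_{i=1}^N\lambda_i'q_i\|<\delta$. For any $\tau\in T(A_\af)$ this yields
\begin{equation*}
\Bigl|\tau(a)-\sum_{i=1}^N\lambda_i'\tau(q_i)\Bigr| \;\leq\; |\tau(a)-\tau(\tilde b)| + \Bigl|\tau(\tilde b)-\sum_{i=1}^N\lambda_i'\tau(q_i)\Bigr| \;<\; (\|a\|+2)\delta \;<\; \varepsilon,
\end{equation*}
which is the desired estimate. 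The only real subtlety is the passage from traces on $A_{\{x_0\}}$ to traces on $A_\af$, which is handled cleanly by the affine homeomorphism of Proposition \ref{3.3}; everything else is a routine cut-down and finite-spectrum approximation argument.
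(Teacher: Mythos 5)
Your proof is correct, and it executes the shared overall strategy (localize via Lemma \ref{7.1}, then invoke real rank zero of the orbit-breaking subalgebra) in a genuinely different order from the paper. The paper first uses $RR(A_\af)=0$ (which it gets from $TR(A_\af)=0$ via Theorem \ref{3.7}) to write $a$ as an approximate real combination of projections $p_i\in A_\af$, then cuts each $p_i$ down by the projection $p$ from Lemma \ref{7.1}, approximates each $p_ip$ by an element $b_i\in A_{\{x_0\}}$, and only then applies $RR(A_{\{x_0\}})=0$ to each $b_i$; it controls $|\tau(ap)-\tau(a)|$ by Cauchy--Schwarz, which forces the choice $\tau(1-p)<\varepsilon^2/4$. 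You instead cut $a$ down first, symmetrize $pb$ to get a self-adjoint $\tilde b\in A_{\{x_0\}}$ close to $pap$, and use the trace identity $\tau(a)=\tau(pap)+\tau((1-p)a(1-p))$ with the linear bound $\|a\|\,\tau(1-p)$. This buys two things: you never need $RR(A_\af)=0$ (so Theorem \ref{3.7} is not invoked in this lemma), and the bookkeeping collapses from a double-indexed family $\{\lambda_i\lambda_m^{(i)}p_m^{(i)}\}$ to a single finite-spectrum approximation of one self-adjoint element. The one step you handle that the paper leaves implicit is the identification $T(A_\af)\cong T(A_{\{x_0\}})$, which you correctly source from Proposition \ref{3.3}. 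Both arguments are sound; yours is the more economical.
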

\begin{proof}
Note that  by Theorem \ref{3.7}, $TR(A_\af)=0$. Now fix $\varepsilon>0$ and let $a\in (A_\af)_{s.a.}$. Without loss of generality, assume $\|a\|=1$. Since $A_\af$ has tracial rank zero, it has real rank zero, and hence there exists real numbers $\lambda_i$ and orthogonal projections $p_i\in \mathcal{P}(A_\af)\,(i=1,\cdots,k)$ such that 
\[\|a-\sum_{i=1}^k\lambda_ip_i\|<\varepsilon/8\]
Denote $M=1+\max\{|\lambda_i|: 1\leq i\leq k\}>0$. Let $\mathcal{F}=\{p_1,p_2,\cdots,p_k\}$. Then according to Lemma \ref{7.1}, there exists a projection $p\in A_{\{x_0\}}$ such that 
\[p_ip\in_{\varepsilon/8kM}A_{\{x_0\}}\ \ {\rm and}\ \ \tau(1-p)<\varepsilon^2/4\]
for all $\tau\in T(A_{\{x_0\}})=T(A_\af)$. For every $i\in\{1,\cdots,k\}$, let $b_i\in A_{\{x_0\}}$ be an element with 
\[\|b_i-p_ip\|<\frac{\varepsilon}{8kM}\]
Since ${\rm RR}(A_{\{x_0\}})=0$, there exists, for every $i\in\{1,\cdots,k\}$, a positive integer $m(i)$, complex numbers $\lambda^{(i)}_m$ and projections $p^{(i)}_m\in \mathcal{P}(A_{\{x_0\}})$ for $m=1,\cdots,m(i)$ such that 
\[\|b_i-\sum_{m=1}^{m(i)}\lambda^{(i)}_mp^{(i)}_m\|<\frac{\varepsilon}{4kM}\]
for all $i=1,\cdots,k$. This follows that 
\[\|\sum_{i=1}^k\lambda_ib_i-\sum_{i=1}^k\sum_{m=1}^{m(i)}\lambda_i\lambda^{(i)}_mp^{(i)}_m\|<\sum_{i=1}^k\frac{|\lambda_i|\varepsilon}{4kM}<\frac{\varepsilon}{4}\]
On the other hand, since $\|b_i-p_ip\|<\varepsilon/8kM$, we have
\[\|\sum_{i=1}^k\lambda_ib_i-\sum_{i=1}^k \lambda_ip_ip\|=\|\sum_{i=1}^k(\lambda_ib_i-\lambda_ip_ip)\|<\varepsilon/8\]
This follows that 
\[\|ap-\sum_{i=1}^k\lambda_ib_i\|<\varepsilon/4\]
Denote the projections $p^{(i)}_m\in \mathcal{P}(A_{\{x_0\}})$ and complex numbers $\lambda_i\lambda^{(i)}_m$ for $1\leq m\leq m(i)$ and $1\leq i\leq k$ by $q_i$ and $\lambda'_i$ for $1\leq i\leq N=m(1)+\cdots+m(k)$. Then
\[\|ap-\sum_{i=1}^N\lambda'_iq_i\|<\varepsilon/2\]
This follows that for every $\tau\in T(A_\af)$, we have
\[\bigg|\tau(ap)-\tau\left(\sum_{i=1}^N\lambda'_iq_i\right)\bigg|<\varepsilon/2\]
Note that $\|a\|=1$, and hence $\tau(a^2)=\tau(a^*a)\leq \|a\|^2=1$, which follows that 
\begin{align*}
&|\tau(ap)-\tau(a)|\\
=\,&|\tau(a(1-p))|\\
\leq\,&\sqrt{\tau(a^2)\cdot\tau((1-p)^2)}\\
\leq\,&\sqrt{\tau(1-p)}\\
<\,&\varepsilon/2
\end{align*}
This finally yields
\[\bigg|\tau(a)-\tau\left(\sum_{i=1}^N\lambda'_iq_i\right)\bigg|<\varepsilon\]
for all $\tau\in T(A_\af)$, which completes the proof.
\end{proof}

\begin{lem}\label{7.3}
Assume $x_0\in K$ and ${\rm RR}(A_{\{x_0\}})=0$. Then for every $\varepsilon>0$ and $f\in C(X)_+$, there exists complex numbers $\lambda'_1, \cdots,\lambda'_N$ and $x_1,\cdots,x_N\in K_0(C(X))_+$ such that 
\[|\rho_{A_\af}(j_\af(f))(\tau)-\sum_{i=1}^N\lambda_i'\rho_{A_\af}((j_\af)_{*0}(x_i))(\tau)|<\varepsilon\]
for all $\tau\in T(A_\af)$.
\end{lem}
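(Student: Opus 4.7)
The plan is to imitate the proof of Lemma \ref{5.9} but to use the orbit-breaking surjection $(j_1)_{*0}: K_0(C(X)) \to K_0(A_{\{x_0\}})$ recorded at the start of this section, instead of relying on surjectivity of $(j_\af)_{*0}$ onto $K_0(A_\af)$, which is no longer available once $K_1(C(\Omega))$ is allowed to be non-trivial.

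Fix $f \in C(X)_+$ and $\varepsilon > 0$. First, I would apply Lemma \ref{7.2} to the self-adjoint element $j_\af(f) \in (A_\af)_{s.a.}$ to produce projections $q_1, \ldots, q_N \in A_{\{x_0\}}$ and complex numbers $\mu_1, \ldots, \mu_N$ with
\[\bigg|\rho_{A_\af}(j_\af(f))(\tau) - \sum_{i=1}^N \mu_i \tau(q_i)\bigg| < \varepsilon\]
for every $\tau \in T(A_\af)$.

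Next, using the surjectivity of $(j_1)_{*0}$, I would choose, for each $i$, an element $y_i \in K_0(C(X))$ with $(j_1)_{*0}(y_i) = [q_i]$, and write $y_i = [p_i] - [p_i']$ for projections $p_i, p_i' \in \mathcal{P}_\infty(C(X))$, so that $a_i := [p_i]$ and $b_i := [p_i']$ both lie in $K_0(C(X))_+$. Denoting by $\iota: A_{\{x_0\}} \hookrightarrow A_\af$ the canonical inclusion, one has $j_\af = \iota \circ j_1$, and therefore $(j_\af)_{*0}(y_i) = \iota_{*0}([q_i])$, which is precisely the class of the projection $q_i$ viewed inside $A_\af$. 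Consequently, $\rho_{A_\af}((j_\af)_{*0}(y_i))(\tau) = \tau(q_i)$ for every $\tau \in T(A_\af)$.

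Combining the two steps and using $y_i = a_i - b_i$, the inequality rewrites as
\[\bigg|\rho_{A_\af}(j_\af(f))(\tau) - \sum_{i=1}^N \mu_i \big(\rho_{A_\af}((j_\af)_{*0}(a_i))(\tau) - \rho_{A_\af}((j_\af)_{*0}(b_i))(\tau)\big)\bigg| < \varepsilon.\]
Relabelling the $2N$ pairs $(\mu_i, a_i)$ and $(-\mu_i, b_i)$ as $(\lambda'_j, x_j)$ with $x_j \in K_0(C(X))_+$ then delivers the claimed estimate. I do not anticipate any serious obstacle: the sign-change issue arising from the positivity requirement on the $x_j$ is absorbed into the complex coefficients $\lambda'_j$, and all the non-trivial content has already been packaged into Lemma \ref{7.2} and the surjectivity of $(j_1)_{*0}$.
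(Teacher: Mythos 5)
Your proposal is correct and follows essentially the same route as the paper: apply Lemma \ref{7.2}, lift the classes $[q_i]$ through the surjection $(j_1)_{*0}$, and use the factorization $j_\af=\iota\circ j_1$ together with Proposition \ref{3.3} to identify $\rho_{A_\af}((j_\af)_{*0}(y_i))(\tau)$ with $\tau(q_i)$. The only (harmless) difference is that the paper chooses the lifts directly in $K_0(C(X))_+$, using that $(j_1)_{*0}$ carries the positive cone onto the positive cone, whereas you lift arbitrarily and then split each lift as a difference of positive classes, doubling the number of terms and absorbing the signs into the complex coefficients.
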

\begin{proof}
According to Lemma \ref{7.2}, there exists projections $q_1,\cdots,q_N\in A_{\{k_0\}}$ and complex numbers $\lambda'_1, \cdots,\lambda'_N$ such that 
\[\bigg|\tau(j_\af(f))-\sum_{i=1}^N\lambda'_i\tau(q_i)\bigg|<\varepsilon\]
for all $\tau\in T(A_\af)$. Note that the embedding $j_1: C(X)\to A_{\{k_0\}}$ induces a surjection of $K_0$-groups with $(j_1)_{*0}(K_0(C(X))_+)=K_0(A_{\{k_0\}})_+$, there exists $x_i\in K_0(C(X))_+$ such that 
\[(j_1)_{*0}(x_i)=[q_i]\]
Since the embedding $j_{\{k_0\}}: A_{\{k_0\}}\hookrightarrow A_\af$ induces an embedding of $K_0$-groups which sends $x$ to $(x,0)$ by Proposition \ref{3.3}, it follows that
\[(j_\af)_{*0}(x_i)=(j_{\{k_0\}})_{*0}\circ(j_1)_{*0}(x_i)=([q_i],0)\]
and
\[\rho_{A_\af}([q_i])(\tau)=\rho_{A_\af}([q_i],0)(\tau)=\tau(q_i)\]
for all $\tau\in T(A_\af)$ and $1\leq i\leq N$. This implies that
\[\sum_{i=1}^N\lambda'_i\tau(q_i)=\sum_{i=1}^N\lambda'_i\rho_{A_\af}([q_i],0)(\tau)=\sum_{i=1}^N\lambda'_i\rho_{A_\af}((j_\af)_{*0}(x_i))(\tau)\]
for all $\tau\in T(A_\af)$, which completes the proof.
\end{proof}
Now according to Lemma \ref{7.3}, we have, by applying a similar argument as in Lemma \ref{5.10}, the following lemma.
\begin{lem}\label{7.4}
Assume $x_0\in K$ and ${\rm RR}(A_{\{x_0\}})=0$. Let $h: C(X)\hookrightarrow A_\af$ be a unital monomorphism and $h_n: C(X)\hookrightarrow A_\af$ be a sequence of unital monomorphisms. Suppose that for any finite subset $F\subset K_0(C(X))$, there is $N\geq1$ such that
\[\rho_{A_\af}(h_{*0}(x))(\tau)=\rho_{A_\af}({h_n}_{*0}(x))(\tau)\]
for all $n\geq N, \tau\in T(A_\af)$ and $x\in F$, then
\[\rho_{A_\af}(h(f))(\tau)=\lim_{n\to\infty}\rho_{A_\af}(h_n(f))(\tau)\]
for all $f\in C(X)$ and $\tau\in T(A_\af)$. In particular, if $h_1,h_2:C(X)\to A_\af$ are two unital monomorphisms such that
\[\rho_{A_\af}\circ (h_1)_{*0}(x)(\tau)=\rho_{A_\af}\circ(h_2)_{*0}(x)(\tau)\]
for any $x\in K_0(C(X))$ and $\tau\in T(A_\af)$, then
\[\rho_{A_\af}(h_1(f))(\tau)=\rho_{A_\af}(h_2(f))(\tau)\]
for all $f\in C(X)$ and $\tau\in T(A_\af)$.
\end{lem}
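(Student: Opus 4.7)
The plan is to mirror the proof of Lemma~\ref{5.10} essentially verbatim, substituting Lemma~\ref{7.3} for Lemma~\ref{5.9} at the tracial-approximation step. The role previously played by $K_1(C(\Omega))=0$ in Lemma~\ref{5.9} (lifting $K_0$-classes of projections in $A_\bt$ back to $K_0(C(X))$) is now supplied by the surjectivity of $(j_1)_{*0}\colon K_0(C(X))\to K_0(A_{\{x_0\}})$ exploited inside the proof of Lemma~\ref{7.3}; this is the whole reason the orbit-breaking subalgebra is introduced in this section.

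Fix $f\in C(X)$, $\tau\in T(A_\af)$, and $\ep>0$. By decomposing $f=f_+-f_-$ we may assume $f\in C(X)_+$. Lemma~\ref{7.3} applied to $f$ and $\ep/2$ yields scalars $\lambda_1',\dots,\lambda_N'$ and positive classes $x_1,\dots,x_N\in K_0(C(X))_+$ with
\[
\bigl|\rho_{A_\af}(j_\af(f))(\sigma)-\sum_{i=1}^N\lambda_i'\,\rho_{A_\af}((j_\af)_{*0}(x_i))(\sigma)\bigr|<\ep/2
\]
for every $\sigma\in T(A_\af)$. Following the pattern of Lemma~\ref{5.10}, define tracial states on $C(X)$ by $\tilde\tau(g)=\tau(h(g))$ and $\tilde\tau_n(g)=\tau(h_n(g))$. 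As in that proof, one verifies $\tilde\tau(g\circ\af^{-1})=\tilde\tau(g)$ and $\tilde\tau_n(g\circ\af^{-1})=\tilde\tau_n(g)$, so both extend to tracial states on $A_\af$, still denoted $\tilde\tau,\tilde\tau_n$. Specializing the displayed inequality to $\sigma=\tilde\tau$ and $\sigma=\tilde\tau_n$, and using the identity $\tilde\tau(j_\af(g))=\tau(h(g))$ together with its matrix amplification when evaluating on representatives of the $x_i$, we obtain
\[
\bigl|\tau(h(f))-\sum_{i=1}^N\lambda_i'\,\tau(h_{*0}(x_i))\bigr|<\ep/2,\qquad \bigl|\tau(h_n(f))-\sum_{i=1}^N\lambda_i'\,\tau(h_{n,*0}(x_i))\bigr|<\ep/2.
\]

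Applying the hypothesis to the finite set $F=\{x_1,\dots,x_N\}\subset K_0(C(X))$ produces $N_0\geq1$ such that $\tau(h_{*0}(x_i))=\tau(h_{n,*0}(x_i))$ for all $n\geq N_0$ and every $i$. The two linear combinations therefore coincide, and the triangle inequality yields $|\tau(h(f))-\tau(h_n(f))|<\ep$ for $n\geq N_0$. Since $\tau$ and $\ep$ were arbitrary, this proves $\rho_{A_\af}(h(f))(\tau)=\lim_{n\to\infty}\rho_{A_\af}(h_n(f))(\tau)$ for every $\tau\in T(A_\af)$ and $f\in C(X)$. The ``in particular'' statement is the special case $h_n\equiv h_2$, $h=h_1$. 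The main obstacle is the $\af^{-1}$-invariance check permitting the extension of $\tilde\tau,\tilde\tau_n$ to $T(A_\af)$; granted that step (inherited exactly as in Lemma~\ref{5.10}), everything else is bookkeeping with Lemma~\ref{7.3} in the slot previously occupied by Lemma~\ref{5.9}.
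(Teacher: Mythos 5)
Your proposal is correct and matches the paper exactly: the paper gives no separate proof of Lemma~\ref{7.4}, stating only that it follows ``by applying a similar argument as in Lemma~\ref{5.10}'' with Lemma~\ref{7.3} supplying the tracial approximation in place of Lemma~\ref{5.9}, which is precisely the substitution you carry out. Your identification of the surjectivity of $(j_1)_{*0}$ onto $K_0(A_{\{x_0\}})$ as the replacement for the $K_1(C(\Omega))=0$ hypothesis is also the intended point of introducing the orbit-breaking subalgebra in Section~\ref{sec:7}.
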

\paragraph{Conventions.} We say that $A_{\af}$ (or $A_{\bt}$) has orbit-breaking subalgebras of real rank zero, if there exists $x_0\in K$ such that ${\rm RR}(A_{\{x_0\}})=0$.
\begin{thm}\label{7.5}
Let $\Omega$ be a finite-dimensional compact connected CW-complex with torsion free $K$-groups. Assume that $\Omega$ satisfies the LMP. Let $X=K\times\Omega$ and $\af,\bt: X\to X$ be two minimal rigid homeomorphisms such that $A_\af$ and $A_\bt$ both have orbit-breaking subalgebras of real rank zero. Then the followings are equivalent:

\noindent (1) $(X,\af)$ and $(X,\bt)$ are approximately $K$-conjugate;

\noindent (2) There exists an isomorphism
\[\kappa: (K_0(A_\af), K_0(A_\af)_+, [1_{A_\af}], K_1(A_\af))\to (K_0(A_\bt), K_0(A_\bt)_+, [1_{A_\bt}], K_1(A_\bt))\]
and two sequences of homeomorphisms $\sigma_n,\gamma_n: X\to X$ such that $(X,\af)\sim_{\rm app}(X,\bt)$ via $\sigma_n,\gamma_n$ and for every finitely-generated subgroups $G_i,F_i\subset K_i(C(X))$,
\[\kappa_i\circ (j_\af)_{*i}|_{G_i}=(j_\bt\circ\sigma_n^{-1})_{*i}|_{G_i}\ \ {\rm and}\ \ \kappa_i^{-1}\circ(j_\bt)_{*i}|_{F_i}=(j_\af\circ\gamma_n^{-1})_{*i}|_{F_i}\]
for $i=0,1$ and all sufficient large $n$;

\noindent (3) There exists homeomorphisms $\sigma_n,\gamma_n:X\to X$ such that $(X,\alpha)\sim_{\rm app}(X,\beta)$ via $\sigma_n,\gamma_n$
 and in addition, for any $x\in K_i(C(X))$, there exists $N\geq1$ such that
 \begin{align}
(j_{\alpha}\circ\sigma_n^*)_{*i}(x)=(j_{\alpha}\circ\sigma_{m}^*)_{*i}(x), \ \ \ &(j_{\beta}\circ\gamma_n^*)_{*i}(x)=(j_{\beta}\circ\gamma_{m}^*)_{*i}(x)\\
(j_\alpha\circ(\gamma_n\circ\sigma_m)^*)_{*i}(x)=(j_\alpha)_{*i}(x),\ \ \ &(j_\beta\circ(\sigma_n\circ\gamma_m)^*)_{*i}(x)=(j_\beta)_{*i}(x)
\end{align}
 for all $m\geq n\geq N$ and for $i=0,1$;
 
 \noindent (4) There exists an isomorphism $\theta:A_{\alpha}\to A_\beta$, sequences of unitaries $\{w_n\}\subset A_\alpha$ and $\{u_n\}\subset A_\beta$ and sequences of homeomorphisms $\sigma_n,\gamma_n: X\to X$ such that $(X,\alpha)\sim_{\rm app}(X,\beta)$ via $\sigma_n,\gamma_n$ and in addition, 
 \[\lim_{n\to\infty}\|u_n\theta(j_{\alpha}(f))u_n^*-j_{\beta}(f\circ\gamma_n^{-1})\|=0\]
 and
 \[\lim_{n\to\infty}\|w_n\theta^{-1}(j_{\beta}(f))w_n^*-j_\alpha(f\circ \sigma_n^{-1})\|=0\]
 for all $f\in C(X)$.

\noindent (5) There exists homeomorphisms $\sigma_n,\gamma_n: X\to X$ such that $(X,\alpha)\sim_{\rm app}(X,\beta)$ via $\sigma_n,\gamma_n$ and there exists $\kappa\in KL(A_\alpha, A_\beta)$ which induces an isomorphism
\[\tilde{\kappa}: (K_0(A_\alpha), K_0(A_\alpha)_+, [1_{A_\alpha}], K_1(A_\alpha), T(A_\alpha))\to (K_0(A_\beta), K_0(A_\beta)_+, [1_{A_\beta}], K_1(A_\beta), T(A_\beta))\]
and sequences of unital isomorphisms $\varphi_n, \psi_n$ such that 
\[\lim_{n\to\infty}\|\psi_n(j_\bt(f))-j_\af(f\circ\sigma_n)\|=0, \ \ \ \lim_{n\to\infty}\|\phi_n(j_\af(f))-j_\bt(f\circ\gamma_n)\|=0\]
with $[\phi_n]=[\phi_1]=[\psi_n]^{-1}=\tilde{\kappa}$ for all $n\geq1$.
\end{thm}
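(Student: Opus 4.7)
The plan is to follow the three-step architecture of the proof of Theorem \ref{5.13}, substituting Lemma \ref{7.4} for Lemma \ref{5.10} at the one place where $K_1(C(\Omega))=0$ was used essentially, and reading off condition (2) as a byproduct. First I would establish the easy directions. The implication $(1)\Rightarrow (3)$ is a direct unpacking of Definition \ref{5.6} exactly as in $(1)\Rightarrow (2)$ of Theorem \ref{5.13}. The implication $(4)\Rightarrow (5)\Rightarrow (1)$ is immediate by setting $\varphi_n=\operatorname{ad}_{u_n}\circ\theta$ and $\psi_n=\operatorname{ad}_{w_n}\circ\theta^{-1}$, which are unital isomorphisms hence trivially unital asymptotic morphisms, and taking $\kappa=[\theta]\in KL(A_\af,A_\bt)$. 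The equivalence of (2) with the others is then a byproduct: from (4) one reads off $\kappa_i=\theta_{*i}$, and conversely the $\kappa$ appearing in (2), together with the weak approximate conjugacy, supplies precisely the data needed to run the argument of Step 2 below.

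The heart of the proof is the implication $(3)\Rightarrow (4)$. From the weak approximate conjugacy via $\sigma_n,\gamma_n$, Lemma \ref{5.5} produces unital sequential asymptotic morphisms $\Psi_n:A_\bt\to A_\af$ and $\Phi_n:A_\af\to A_\bt$ intertwining $j_\af$ and $j_\bt$ (through $\sigma_n^{-1},\gamma_n^{-1}$) and sending $u_\af$ and $u_\bt$ to each other asymptotically. Combined with the four $K$-theoretic identities in (3), we obtain on every finitely generated subgroup of $K_*(A_\af)$ a well-defined common value, yielding an isomorphism of scaled preordered groups
\[
\tilde\kappa:(K_0(A_\af),K_0(A_\af)_+,[1_{A_\af}],K_1(A_\af))\to(K_0(A_\bt),K_0(A_\bt)_+,[1_{A_\bt}],K_1(A_\bt)).
\]
Since $(X,\af)$ and $(X,\bt)$ are rigid, $TR(A_\af)=TR(A_\bt)=0$ by Proposition \ref{3.6}, so $\rho_A(K_0)$ is uniformly dense in $\Aff(T(A))$ on both sides, forcing $\tilde\kappa$ to intertwine the tracial simplices automatically. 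Because $K_*(C(\Omega))$ is torsion free, Proposition \ref{3.3} shows that $K_*(A_\af)$ and $K_*(A_\bt)$ are torsion free, so $\underline K(A_\af)=K_*(A_\af)$ and likewise for $\bt$; the UCT then yields $\kappa\in KL(A_\af,A_\bt)$ lifting $\tilde\kappa$, and the classification of unital simple separable amenable $C^*$-algebras of tracial rank zero satisfying the UCT produces a unital isomorphism $\theta:A_\af\to A_\bt$ with $\theta_*=\tilde\kappa$.

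To extract the unitaries $u_n$, I would compare the monomorphism $\theta\circ j_\af:C(X)\to A_\bt$ with the monomorphisms $\Gamma_n:C(X)\to A_\bt$ defined by $\Gamma_n(f)=j_\bt(f\circ\gamma_n^{-1})$. Condition (3) says $\kappa_i\circ(j_\af)_{*i}$ and $(j_\bt\circ\gamma_n^{-1})_{*i}$ agree on any prescribed finitely generated subgroup of $K_i(C(X))$ once $n$ is large, so $[\theta\circ j_\af]=[\Gamma_n]$ in $KL(C(X),A_\bt)$, and for every finite set of projections representing $K_0$-classes one has $\rho_{A_\bt}\circ(\theta\circ j_\af)=\rho_{A_\bt}\circ\Gamma_n$ on those classes for large $n$. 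Here Lemma \ref{7.4} is essential: because $A_\bt$ has an orbit-breaking subalgebra of real rank zero, agreement on $K_0$-classes propagates to
\[
\rho_{A_\bt}(\theta\circ j_\af(f))(\tau)=\lim_{n\to\infty}\rho_{A_\bt}(\Gamma_n(f))(\tau)
\]
uniformly in $\tau\in T(A_\bt)$ for every $f\in C(X)$. Lemma \ref{5.12} then produces, for each finite $\F\subset C(X)$ and $\ep>0$, a unitary $u\in A_\bt$ with $\theta\circ j_\af\stackrel{u}{\sim}_\ep\Gamma_n$ on $\F$ for all sufficiently large $n$; a diagonal exhaustion on a countable dense subset of $C(X)$ delivers a sequence $u_n$ as required in (4), and the unitaries $w_n\in A_\af$ are constructed symmetrically.

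The main obstacle is precisely the step that forces the orbit-breaking-subalgebra hypothesis. Without $K_1(C(\Omega))=0$, the map $(j_\af)_{*0}:K_0(C(X))\to K_0(A_\af)$ is no longer surjective, so a self-adjoint element of $A_\af$ cannot be approximated by a finite-spectrum element all of whose spectral projections lift to $K_0(C(X))$-classes, which is how Lemma \ref{5.10} concludes in the $K_1=0$ setting. Lemmas \ref{7.2} and \ref{7.3}, whose proofs exploit that the orbit-breaking subalgebra $A_{\{x_0\}}$ has real rank zero (together with the fact that $j_1:C(X)\to A_{\{x_0\}}$ is surjective on $K_0$), manufacture the needed approximating projections inside $A_{\{x_0\}}$ whose classes lift; this restores the trace comparison, after which Lemma \ref{7.4} takes over the role of Lemma \ref{5.10} verbatim and the remainder of the argument is a routine adaptation of Theorem \ref{5.13}.
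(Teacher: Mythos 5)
Your proposal is correct and follows essentially the same route as the paper: the paper's own proof simply states that the theorem follows by repeating the argument of Theorem \ref{5.13} with Lemma \ref{7.4} substituted for Lemma \ref{5.10}, which is precisely the one substitution you identify and justify (via the real-rank-zero orbit-breaking subalgebras restoring the trace-comparison step that surjectivity of $(j_\bt)_{*0}$ provided when $K_1(C(\Omega))=0$). Your write-up is in fact more explicit than the paper's about how conditions (2) and (5) are threaded into the cycle of implications, but the underlying argument is identical.
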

\begin{proof}
This is just another version of Theorem \ref{5.13}, where the condition that $K_1(C(\Omega))=0$ is dropped. Note that the assumption that the orbit-breaking subalgebras of $A_\af$ and $A_\bt$ have real rank zero implies that $\af$ and $\bt$ are rigid, by Theorem \ref{3.7}. Then the theorem follows by a similar argument as in Theorem \ref{5.13}, upon applying Lemma \ref{7.4}.
\end{proof}
\begin{rem}
Theorem \ref{7.5} implies that, when the minimal homeomorphisms $\af$ and $\bt$ are rigid and the orbit-breaking subalgebras of $A_\af$ and $A_\bt$ have real rank zero, then the unital asymptotic morphisms in Definition \ref{5.6} can be chosen to be unital isomorphisms. On the other hand, this coincides with another version of definition of approximate $K$-conjugacy made by H. Lin in \cite{LM2}. Also note that when $\Omega=\mathbb{T}$, the rigidity of $\af$ is equivalent to ${\rm RR}(A_{\{x_0\}})=0$ for some $x_0\in K$, as is shown in \cite{LM}. 
\end{rem}

\begin{lem}\label{7.7}
If $(K\times\Omega,\af)$ and $(K\times\Omega,\bt)$ are approximately $K$-conjugate, then 
\[\kappa_i: K_i(A_\af)\to K_i(A_\bt)\]
maps $(j_\af)_{*i}(K_i(C(X)))$ onto $(j_\bt)_{*i}(K_i(C(X)))$ for $i=0,1$.
\end{lem}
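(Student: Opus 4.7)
The plan is to extract the conclusion essentially directly from the definition of approximate $K$-conjugacy; the content of the lemma is just the naturality of $\kappa$ with respect to the canonical embeddings $j_\af,j_\bt$, up to the automorphisms of $C(X)$ induced by the conjugating homeomorphisms. By Definition~\ref{5.6} (or, in our rigid setting, by the equivalent characterization in Theorem~\ref{7.5}), there exist homeomorphisms $\sigma_n,\gamma_n:X\to X$ together with sequences of unital (asymptotic) morphisms $\phi_n:A_\af\to A_\bt$ and $\psi_n:A_\bt\to A_\af$ with $[\phi_n]=[\psi_n]^{-1}=\tilde{\kappa}$ such that
\[
\lim_{n\to\infty}\|\phi_n(j_\af(f))-j_\bt(f\circ\gamma_n)\|=0,\qquad \lim_{n\to\infty}\|\psi_n(j_\bt(f))-j_\af(f\circ\sigma_n)\|=0
\]
for every $f\in C(X)$.

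For the forward containment $\kappa_i((j_\af)_{*i}(K_i(C(X))))\subseteq (j_\bt)_{*i}(K_i(C(X)))$, I would fix $x\in K_i(C(X))$ and let $G_i\subset K_i(C(X))$ be the finitely generated subgroup it generates. Passing the first displayed asymptotic identity to $K$-theory gives, for all $n$ sufficiently large, $[\phi_n]\circ(j_\af)_{*i}|_{G_i}=(j_\bt\circ\gamma_n^{\ast})_{*i}|_{G_i}$, and therefore
\[
\kappa_i\bigl((j_\af)_{*i}(x)\bigr)=(j_\bt)_{*i}\bigl(\gamma_n^{\ast}(x)\bigr)\in(j_\bt)_{*i}(K_i(C(X))),
\]
since $\gamma_n^{\ast}(x)\in K_i(C(X))$.

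For surjectivity onto $(j_\bt)_{*i}(K_i(C(X)))$, I apply the same reasoning to $\psi_n$. Given $y\in K_i(C(X))$, for all sufficiently large $n$ one obtains $\kappa_i^{-1}\bigl((j_\bt)_{*i}(y)\bigr)=(j_\af)_{*i}(\sigma_n^{\ast}(y))$, so
\[
(j_\bt)_{*i}(y)=\kappa_i\bigl((j_\af)_{*i}(\sigma_n^{\ast}(y))\bigr)\in \kappa_i\bigl((j_\af)_{*i}(K_i(C(X)))\bigr).
\]
Combining the two inclusions gives the claimed equality. There is no genuine obstacle: the only point requiring a word of care is that $[\phi_n]$ and $[\psi_n]$ are only well defined on finitely generated subgroups when $\phi_n,\psi_n$ are asymptotic morphisms, but restricting to the finitely generated subgroups $G_i$ and $\langle y\rangle$ as above is harmless since each element of $K_i(C(X))$ sits in some finitely generated subgroup.
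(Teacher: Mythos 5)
Your proof is correct and follows essentially the same route as the paper: both deduce the two containments from the identities $\kappa_i\circ(j_\af)_{*i}=(j_\bt\circ\sigma_n^{-1})_{*i}$ and $\kappa_i^{-1}\circ(j_\bt)_{*i}=(j_\af\circ\gamma_n^{-1})_{*i}$ on finitely generated subgroups furnished by Theorem~\ref{7.5}. If anything, you are slightly more explicit than the paper on the surjectivity half, which the paper leaves as ``which implies the result.''
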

\begin{proof}
According to the Pimsner-Voiculescu six term exact sequence, 
\[(j_\af)_{*i}: C(K,{\Z})\otimes K_i(C(\Omega))\cong K_i(C(X))\to K_i(A_\af)\cong [K^0(K,\tilde{\af})\otimes K_i(C(\Omega))]\oplus K_{i+1}(C(\Omega))\]
which maps $f\in C(K,{\Z})\otimes K_i(C(\Omega))=C(K,K_i(C(\Omega))$ to $([f],0)$. According to Theorem \ref{7.5}, for every $f\in C(K,K_i(C(\Omega)))$, there exists $N\geq1$ such that
\[\kappa_i((j_\af)_{*i}(f))=(j_\bt\circ\sigma_n^{-1})_{*i}(f)\in (j_\bt)_{*i}(K_i(C(X)))\]
which implies the result.
\end{proof}
\begin{lem}\label{7.8}
Let $\af=\tilde{\af}\times c_1\in\Raf$ and $\bt=\tilde{\bt}\times c_2\in\Rbt$ be minimal rigid homeomorphisms. Assume that $A_\af$ and $A_\bt$ both have orbit-breaking subalgebras of real rank zero. If $(K\times\Omega,\af)$ and $(K\times\Omega,\bt)$ are $C^*$-strongly approximately  conjugate, then $(K\times\Omega,\af)$ and $(K\times\Omega,\bt)$ are approximately $K$-conjugate.
\end{lem}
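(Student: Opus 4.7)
The plan is to follow the two-stage strategy used in the implication $(2)\Rightarrow(1)$ of Theorem \ref{6.1}, adapted to accommodate $K_1(C(\Omega))\neq 0$: first descend from the $C^*$-strongly approximate conjugacy of $(X,\af)$ and $(X,\bt)$ to approximate $K$-conjugacy of the underlying Cantor factors $(K,\taf)$ and $(K,\tbt)$ (the analog of Corollary \ref{6.9}), and then lift the Cantor approximate $K$-conjugacy back up (the analog of Lemma \ref{6.13}). The role played by Theorem \ref{5.13} in the two analogs will now be taken by Theorem \ref{7.5}, whose hypotheses are available thanks to the real-rank-zero condition on orbit-breaking subalgebras.

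For the descent, I would observe that the arguments of Lemmas \ref{6.3}--\ref{6.8} never actually use $K_1(C(\Omega))=0$. Writing $\gamma_n=\tilde{\gamma}_n\times z_n$ for the Gelfand dual of $\chi_n$ from Definition \ref{5.7}, the crucial point is that $z_n$ automatically fixes $[1_\Omega]={\bf 1}_{K_0(C(\Omega))}$, since any $*$-isomorphism of $C(\Omega)$ preserves the class of the unit. Combining this with the relation $\kappa_0\circ(j_\af)_{*0}=(j_\bt)_{*0}\circ(\chi_n)_{*0}$ on finitely-generated subgroups for all large $n$ (which follows from $\varphi_n\circ j_\af\approx_\ep j_\bt\circ\chi_n$ and the $KL$-class condition), Lemma \ref{6.3} yields $\kappa_0([{\bf 1}_{K_0(C(\Omega))}|_O]_\af)=[{\bf 1}_{K_0(C(\Omega))}|_{\tilde{\gamma}_n(O)}]_\bt$ for every clopen $O\subset K$; Lemmas \ref{6.4}--\ref{6.8} then go through verbatim, using only this observation and the torsion-freeness of $K_0(C(\Omega))$. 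The output is a unital order isomorphism $\Theta\colon K^0(K,\taf)\to K^0(K,\tbt)$, and Theorem 5.4 of \cite{LM1} implies that $(K,\taf)$ and $(K,\tbt)$ are approximately $K$-conjugate via some $\tilde{\sigma}_n,\tilde{\gamma}_n\colon K\to K$.

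For the lift, Lemma \ref{6.10} upgrades the resulting Cantor weakly approximate conjugacy to a weakly approximate conjugacy of $(X,\af)$ and $(X,\bt)$ via $\sigma_n=\tilde{\sigma}_n\times g_n$ and $\gamma_n=\tilde{\gamma}_n\times h_n$ with quasi-trivial cocycles. Using the canonical decompositions $K_i(A_\af)\cong(K^0(K,\taf)\otimes K_i(C(\Omega)))\oplus K_{i+1}(C(\Omega))$ and similarly for $\bt$ from Proposition \ref{3.3}, I define $\kappa_i=(\Theta\otimes\id_{K_i(C(\Omega))})\oplus\id_{K_{i+1}(C(\Omega))}$. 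For a tensor element $F\otimes[q]\in C(K,\Z)\otimes K_i(C(\Omega))=K_i(C(X))$, the quasi-triviality of $g_n$ gives $\sigma_n^*(F\otimes[q])=(F\circ\tilde{\sigma}_n^{-1})\otimes[q]$, so that $\kappa_i\circ(j_\af)_{*i}(F\otimes[q])=(\Theta([F]_{\taf})\otimes[q],0)=([F\circ\tilde{\sigma}_n^{-1}]_{\tbt}\otimes[q],0)=(j_\bt\circ\sigma_n^{-1})_{*i}(F\otimes[q])$ for all large $n$, and symmetrically for $\gamma_n$. Theorem \ref{7.5}(2) then directly yields the approximate $K$-conjugacy of $(X,\af)$ and $(X,\bt)$.

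The main obstacle I anticipate lies in verifying that $\kappa_0$ so defined is genuinely an order isomorphism taking $[1_{A_\af}]$ to $[1_{A_\bt}]$, since the positive cone of $K_0(A_\af)$ is not simply the direct-sum cone across the Pimsner--Voiculescu splitting. Since $TR(A_\af)=TR(A_\bt)=0$ by Theorem \ref{3.7}, positivity is detected by traces, and Lemma \ref{7.4} reduces tracial comparisons on $A_\af$ to the image of $K_0(C(X))$; the quasi-triviality of $c_1,c_2,g_n,h_n$ then makes the tracial pairings match up compatibly with the chosen splittings, while unitality of $\kappa_0$ follows because it sends the class of $1_{A_\af}={\bf 1}_{K_0(C(\Omega))}|_K$ to $1_{A_\bt}$ by the construction in the descent step.
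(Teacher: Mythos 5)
Your overall strategy is the paper's: extract a unital order isomorphism $\Theta\colon K^0(K,\taf)\to K^0(K,\tbt)$ from the $C^*$-strongly approximate conjugacy by running the arguments of Lemmas \ref{6.3}--\ref{6.8} (which, as you correctly note, only use that the cocycles fix ${\bf 1}_{K_0(C(\Omega))}$ and that $K_0(C(\Omega))$ is torsion free), pass to approximate $K$-conjugacy of the Cantor factors, lift the conjugating maps via Lemma \ref{6.10} so that the cocycles land in $G_{\Lip}$ and are quasi-trivial, verify the intertwining on elementary tensors, and conclude by Theorem \ref{7.5}. That is exactly the route taken in the paper.

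The one genuine soft spot is your definition $\kappa_0=(\Theta\otimes\id_{K_0(C(\Omega))})\oplus\id_{K_1(C(\Omega))}$ together with the claim that this is a unital order isomorphism. The splitting $K_0(A_\af)\cong(K^0(K,\taf)\otimes K_0(C(\Omega)))\oplus K_1(C(\Omega))$ is not canonical, and the positive cone and the trace pairing do not respect it: already for $\Omega=\T$ the complementary summand $K_1(C(\T))\cong\Z$ is generated by a Rieffel-type class whose trace is a rotation number, which is generally nonzero and depends on the dynamics, so your assertion that ``the tracial pairings match up compatibly with the chosen splittings'' is not justified and is where an argument along these lines would actually break. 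The repair is cheap and is what the paper does: there is no need to redefine the $K_0$-component at all. The $C^*$-strongly approximate conjugacy already hands you a unital order isomorphism $\tilde{\kappa}_0=[\varphi_1]_{*0}$ of $(K_0(A_\af),K_0(A_\af)_+,[1_{A_\af}])$ onto $(K_0(A_\bt),K_0(A_\bt)_+,[1_{A_\bt}])$, and the descent step shows precisely that $\tilde{\kappa}_0$ restricts to $\Theta\otimes\id_{K_0(C(\Omega))}$ on the image of $(j_\af)_{*0}$ --- which is the only locus where the intertwining conditions of Theorem \ref{7.5}(2) constrain $\kappa_0$. So take $\kappa_0=\tilde{\kappa}_0$ and use your split formula only for $\kappa_1$ (where there is no order to preserve); the paper packages this by invoking the classification of tracial rank zero algebras to realize $(\tilde{\kappa}_0,(\Theta\otimes\id_{K_1(C(\Omega))})\oplus\id_{K_0(C(\Omega))})$ by an isomorphism $\Phi\colon A_\af\to A_\bt$, but for the purposes of Theorem \ref{7.5}(2) even that is optional. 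With this substitution your argument closes.
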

\begin{proof}
Let $\phi_n:A_\af\to A_\bt, \psi_n:A_\bt\to A_\af$ be sequences of unital isomorphisms, $\chi_n,\lambda_n: X\to X$ be homeomorphisms such that 
\[[\phi_n]=[\phi_1]=[\psi_n^{-1}]\ \ {\rm in}\ KL(A_\af, A_\bt)\]
and
\[\lim_{n\to\infty}\|\varphi_n\circ j_\alpha(f)-j_\beta\circ(f\circ\chi_n^{-1})\|=0,\ \ \lim_{n\to\infty}\|\psi_n\circ j_\beta(f)-j_\alpha\circ(f\circ\lambda_n^{-1})\|=0\]
Since $K_i(C(\Omega))\,(i=0,1)$ is torsion free,  $K_i(A_\af)$ and $K_i(A_\bt)$ are also torsion free by Proposition \ref{3.3}. Hence $[\phi_n]$ induces a unital isomorphism
\[\tilde{\kappa}: (K_0(A_\af), K_0(A_\af)_+,[1_{A_\af}], K_1(A_\af))\to (K_0(A_\bt), K_0(A_\bt)_+,[1_{A_\bt}], K_1(A_\bt))\]
such that for every $x\in K_i(C(X))$, there exists $N\geq1$ with
\[\tilde{\kappa}_i((j_\af)_{*i}(x))=(j_\bt\circ \lambda_n^*)_{*i}(x)\ \ {\rm and}\ \ \tilde{\kappa}_i^{-1}((j_\bt)_{*i}(x))=(j_\af\circ\chi_n^*)_{*i}(x)\]
Therefore, by a similar argument as the proof of Lemma \ref{6.3} to Lemma \ref{6.8}, and according to Lemma \ref{7.7}, there exists a unital order isomorphism 
\[\Theta: K^0(K,\tilde{\af})\to K^0(K,\tilde{\bt})\]
such that the natural embedding of $K^0(K,\tilde{\af})$ and $K^0(K,\tilde{\bt})$ into $K_i(A_\af)$ and $K_i(A_\bt)$ intertwining $\Theta$ and $\tilde{\kappa}_0$. Note that ${\rm TR}(A_\af)={\rm TR}(A_\bt)=0$, by the classification of simple unital $C^*$-algebras of tracial rank zero, one obtains an isomorphism $\Phi:A_\af\to A_\bt$ such that
\[\Phi_{*0}=\tilde{\kappa}_0,\ \ \ \Phi_{*1}=[\Theta\otimes{\rm id}_{K_1(C(\Omega))}]\oplus {\rm id}_{K_0(C(\Omega))}\ \ {\rm and}\ \ \Phi_{*0}|_{K^0(K,\tilde{\af})\otimes K_0(C(\Omega))}=\Theta\otimes{\rm id}_{K_0(C(\Omega))}\]
Denote $\kappa=\Phi_{*}$. On the other hand, since $\Theta: K^0(K,\tilde{\af})\to K^0(K,\tilde{\bt})$ is a unital order isomorphism, it induces the approximate $K$-conjugacy of $(K,\tilde{\af})$ and $(K,\tilde{\bt})$, say, via $\tilde{\sigma}_n$ and $\tilde{\gamma}_n$. In particular, $(K,\tilde{\af})$ and $(K,\tilde{\bt})$ are weakly approximately conjugate.

Recall that in Lemma \ref{6.10}, $K_1(\Omega)$ is not necessarily assumed to be $0$, which follows that 
\[(K\times\Omega,\tilde{\af}\times c_1)\sim_{\rm app}(K\times\Omega,\tilde{\bt}\times c_2)\ {\rm via\ }\sigma_n,\gamma_n\]
such that $\sigma_n=\tilde{\sigma}_n\times y_n$ and $\gamma_n=\tilde{\gamma}_n\times z_n$ for sequences of cocycles $y_n, z_n\in C(K,G_\mathcal{L})$ for some $\Lip>0$. Then for every 
\[x=\sum_{\rm finite}f_s\otimes x_s\in K_i(C(X))\]
one checks that, there is $N\geq1$ with 
\[\kappa_i\circ(j_\af)_{*i}(x)=(j_\bt\circ\sigma_n^{-1})_{*i}(x)\ \ {\rm and}\ \ \kappa_{i}^{-1}\circ(j_\bt)_{*i}(x)=(j_\af\circ\gamma_n^{-1})_{*i}(x)\]
for all $n\geq N$. Finally, Theorem \ref{7.5} implies that $(K\times\Omega,\af)$ and $(K\times\Omega,\bt)$ are approximately $K$-conjugate (via quasi-trivial cocycles, since $y_n,z_n$ take values in $G_\Lip$).
\end{proof}

\begin{thm}\label{7.9}
Let $(K,\tilde{\af})$ and $(K,\tilde{\bt})$ be Cantor minimal systems. Let $\Omega$ be a compact connected finite CW-complex with torsion free $K$-groups. Assume that $\Omega$ satisfies the LMP. Let $\af=\tilde{\af}\times c_1\in\Raf$ and $\bt=\tilde{\bt}\times c_2\in\Rbt$ be minimal  homeomorphisms of $X$ such that $A_\af$ and $A_\bt$ both have orbit-breaking subalgebras of real rank zero. Denote $X=K\times\Omega$. Then the followings are equivalent:

\noindent $(1)$ $(X,\af)$ and $(X,\bt)$ are approximately $K$-conjugate;

\noindent $(2)$ There exists a homeomorphism $\chi:X\to X$ and a unital order isomorphism
\[\kappa: (K_0(A_\af),K_0(A_\af)_+,[1_{A_\af}], K_1(A_\af))\to (K_0(A_\bt), K_0(A_\bt)_+,[1_{A_\bt}],K_1(A_\bt))\]
such that 
\begin{align}
\begin{cases}
\vspace{0.1cm}
\kappa_i\circ(j_\alpha)_{*i}=(j_\beta\circ\chi^{-1})_{*i},\ \ (i=0,1)\\
\lambda(\tau)\circ j_\af(f)=\tau\circ j_\bt(f\circ\chi^{-1})\ {\rm for\ all\ }\tau\in T(A_\bt)\ {\rm and}\ f\in C(X)
\end{cases}
\end{align}

\noindent $(3)$ $(X,\af)$ and $(X,\bt)$ are $C^*$-strongly approximately conjugate.
\end{thm}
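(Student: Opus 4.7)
The proof will establish $(1)\Leftrightarrow(3)$ from results already at hand, and then handle $(1)\Leftrightarrow(2)$ by adapting the strategy of Theorem \ref{6.1}. The implication $(1)\Rightarrow(3)$ follows from statement $(4)$ of Theorem \ref{7.5}, since the isomorphism $\theta: A_\af\to A_\bt$ provided there is a $C^*$-strongly approximate conjugacy in the sense of Definition \ref{5.7}, and the reverse direction $(3)\Rightarrow(1)$ is precisely Lemma \ref{7.8}. It therefore remains to prove $(1)\Leftrightarrow(2)$.

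For $(1)\Rightarrow(2)$, the plan is to apply Theorem \ref{7.5} to obtain a unital order isomorphism $\kappa$ and homeomorphisms $\sigma_n, \gamma_n$ witnessing the approximate $K$-conjugacy. By Lemma \ref{6.10}, we may arrange that the cocycles of $\sigma_n, \gamma_n$ are quasi-trivial, so that $\sigma_n=\tilde{\sigma}_n\times y_n$ with $\tilde{\sigma}_n$ giving a weakly approximate conjugacy of the Cantor factors. Mimicking the argument of Lemmas \ref{6.3}--\ref{6.8} at the $K^0(K,\cdot)$ level (which does not essentially depend on $K_1(C(\Omega))=0$), one extracts a unital order isomorphism $\Theta: K^0(K,\taf)\to K^0(K,\tbt)$. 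Since $C(K)$ is a unital AF-algebra, there is a homeomorphism $\tilde{\chi}: K\to K$ realizing $\Theta$, and we set $\chi=\tilde{\chi}\times\id_\Omega$. The $K$-theory commutativity $\kappa_i\circ (j_\af)_{*i}=(j_\bt\circ\chi^{-1})_{*i}$ will follow from the decomposition of Proposition \ref{3.3}, the quasi-triviality of the cocycles, and the fact that $\id_\Omega$ acts trivially on $K_i(C(\Omega))$. The trace condition will be deduced from Lemma \ref{7.4}: by the classification of simple unital $C^*$-algebras of tracial rank zero, $\kappa$ extends to an affine homeomorphism $\lambda: T(A_\bt)\to T(A_\af)$, and Lemma \ref{7.4} then forces the tracial values on $C(X)$ to be determined by the matching $K_0$-values.

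For $(2)\Rightarrow(1)$, given $\chi$ and $\kappa$, the plan is to use Lemma \ref{7.7} to restrict $\kappa_0$ to the subgroup of $K_0(A_\af)$ coming from the Cantor factor, extract a unital order isomorphism $\Theta:K^0(K,\taf)\to K^0(K,\tbt)$, and invoke Theorem 5.4 of \cite{LM1} to produce an approximate $K$-conjugacy of $(K,\taf)$ and $(K,\tbt)$ via some $\tilde{\sigma}_n, \tilde{\gamma}_n$. Lifting via Lemma \ref{6.10} then yields a weakly approximate conjugacy $(X,\af)\sim_{\rm app}(X,\bt)$ via $\sigma_n=\tilde{\sigma}_n\times y_n, \gamma_n=\tilde{\gamma}_n\times z_n$ with quasi-trivial cocycles. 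A computation parallel to Lemma \ref{6.13} will verify the finitely-generated $K$-theory compatibility required by condition $(2)$ of Theorem \ref{7.5}, and that theorem then yields the approximate $K$-conjugacy of $(X,\af)$ and $(X,\bt)$.

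The main obstacle lies in the trace condition of statement $(2)$: when $K_1(C(\Omega))$ is nontrivial, traces on $A_\af$ are not determined by $K_0$ in the elementary way used in the proof of Theorem \ref{6.1}. The hypothesis that the orbit-breaking subalgebras have real rank zero is precisely what makes Lemma \ref{7.4} available, packaging the tracial information through the approximate $K_0$-density of Lemmas \ref{7.2}--\ref{7.3}. Accordingly, verifying that $\chi=\tilde{\chi}\times\id_\Omega$ respects the full tracial compatibility in $(1)\Rightarrow(2)$, and that the $\Theta$ extracted from $\kappa$ in $(2)\Rightarrow(1)$ correctly interacts with the Pimsner--Voiculescu splittings of Proposition \ref{3.3} (including the $K_{i+1}(C(\Omega))$ summands on both sides), will require careful bookkeeping analogous to, but more intricate than, that of Lemmas \ref{6.7} and \ref{6.14}.
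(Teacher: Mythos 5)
Your proposal is correct and follows essentially the same route as the paper: Lemma \ref{7.4} as the trace-matching substitute for Lemmas \ref{5.9}--\ref{5.11}, the Section \ref{sec:6} machinery (adapted via Lemma \ref{7.7}) to extract $\Theta$ and set $\chi=\tilde{\chi}\times\id_\Omega$, Lemma \ref{6.10} and the Lemma \ref{6.13}-style computation for the lifting, and the classification of tracial rank zero algebras together with the uniqueness theorem to pass between $K$-data and honest isomorphisms. The only difference is organizational: the paper runs the cycle $(1)\Rightarrow(2)\Rightarrow(3)\Rightarrow(1)$, deriving the $C^*$-strong approximate conjugacy from $(2)$ via Lemma \ref{5.12}, whereas you prove $(1)\Leftrightarrow(3)$ directly through Theorem \ref{7.5}(4) and handle $(1)\Leftrightarrow(2)$ separately; both decompositions rest on the same lemmas.
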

\begin{proof}
$(1)\Rightarrow (2)$: Let $(K\times\Omega,\af)$ and $(K\times\Omega,\bt)$ be approximately $K$-conjugate.  Denote the associated isomorphism of $K$-groups by $\tilde{\kappa}$. According to Lemma \ref{7.7}, $\tilde{\kappa}_i$ maps 
\[(j_\af)_{*i}(K^i(X))\cong K^0(K,\tilde{\af})\otimes K^i(X)\] 
onto 
\[(j_\bt)_{*i}(K^i(X))\cong K^0(K,\tilde{\bt})\otimes K^i(X)\]
According to Lemma \ref{7.8}, there exists a unital order isomorphism $\Theta: K^0(K,\tilde{\af})\to K^0(K,\tilde{\bt})$
such that $\tilde{\kappa}_0|_{(j_\af)_{*0}}=\Theta\otimes {\rm id}_{K_0(C(\Omega))}$
and the natural embedding of $K^0(K,\tilde{\af})$ and $K^0(K,\tilde{\bt})$ into $K_0(A_\af)$ and $K_0(A_\bt)$ intertwining $\Theta$ and $\tilde{\kappa_0}$. Define
\[\kappa_0=\tilde{\kappa}_0\ \ {\rm and}\ \ \kappa_1=(\Theta\otimes{\rm id}_{K_1(C(\Omega))})\oplus {\rm id}_{K_0(C(\Omega))}\]
Then it is clear that $\kappa=(\kappa_0,\kappa_1)$ gives a unital order isomorphism of $(K_0(A_\af), K_1(A_\af))$ and $(K_0(A_\bt), K_1(A_\bt))$.  Let $\tilde{\chi}:K\to K$ be the homeomorphism associated with $\Theta$ and define
\[\chi=\tilde{\chi}\times {\rm id}_\Omega\]
Then it is straightforward to verify that
\[\kappa_i\circ(j_\alpha)_{*i}=(j_\beta\circ\chi^{-1})_{*i}\ \ (i=0,1).\]
We now check that for all $\tau\in T(A_\bt)$ and $f\in C(X)$, we have $\lambda(\tau)\circ j_\af(f)=\tau\circ j_\bt(f\circ\chi^{-1})$. Note that for all $x\in K_0(A_\af)$, $\rho_{A_{\af}}(x)(\lambda(\tau))=\rho_{A_\bt}(\kappa_0(x))(\tau)$, which follows for any $p\in K_0(C(X))$,
\[\rho_{A_\af}((j_\af)_{*0}(p))(\lambda(\tau))=\rho_{A_\bt}(\kappa_0\circ (j_\af)_{*0}(p))(\tau)=\rho_{A_\bt}((j_\bt\circ\chi^{-1})_{*0}(p))(\tau)\]
Since  $TR(A_\af)=TR(A_\bt)=0$, which follows that  there is a unital isomorphism $\phi: A_\af\to A_\bt$ such that $\varphi_{*i}=\kappa_{i}$. Now define two monomorphisms $h_1,h_2: C(X)\hookrightarrow A_\bt$ by $h_1(f)=\varphi\circ j_{\af}(f)$ and $h_2(f)=j_\bt\circ \chi^*(f)$. Then we have
\[\rho_{A_\bt}\circ (h_1)_{*0}(p)(\tau)=\rho_{A_\bt}\circ (h_2)_{*0}(p)(\tau)\]
for all $p\in K_0(C(X))$ and $\tau\in T(A_\bt)$.  Since $A_\af$ and $A_\bt$ have orbit-breaking subalgebras of real rank zero, by Lemma \ref{7.4}, we have
\[\rho_{A_\bt}\circ h_1(f)(\tau)=\rho_{A_\bt}\circ h_2(f)(\tau)\]
for all $f\in C(X)$ and $\tau\in T(A_\bt)$. In other words, $\lambda(\tau)\circ j_\af(f)=\tau\circ j_\bt(f\circ\chi^{-1})$. This shows the implication $(1)\Rightarrow(2)$.
\vspace{0.2cm}

\noindent $(2)\Rightarrow (3)$: Since $A_\af$ and $A_\bt$ have orbit-breaking subalgebras of real rank zero, $\af$ and $\bt$ are rigid by Theorem \ref{3.7}, that is, $TR(A_\alpha)=TR(A_\beta)=0$. According to classification of simple separable amenable unital $C^*$-algebras with tracial rank zero, there is an isomorphism $\Phi: A_\alpha\to A_\beta$ such that $\Phi_{*i}=\kappa_i$. Define two monomorphisms $j_1, j_2: C(X)\hookrightarrow A_\beta$ by 
\[j_1(f)=\Phi\circ j_\alpha(f),\ \ {\rm and}\ \ j_2(f)=j_\beta(f\circ\chi^{-1})\]
Since $\Phi$ induces $\kappa$ and $\lambda:T(A_\beta)\to T(A_\alpha)$ is also induced by $\kappa|_{K_0(A_\alpha)}$, the condition yields that
\begin{align*}
(j_1)_{*i}=(j_2)_{*i} \ \ {\rm and}\ \ \tau\circ h_1(f)=\tau\circ h_2(f)
\end{align*}
for all $\tau\in T(A_\beta)$ and $f\in C(X)$. According to Lemma \ref{5.12}, $h_1$ and $h_2$ are approximately unitarily equivalent, that is, there is a sequence of unitaries $\{u_n\}\subset A_\beta$ such that
\[\lim_{n\to\infty}\|u_n^*\Phi(j_\alpha(f))u_n-j_\beta(f\circ\chi^{-1})\|=0\]
Finally, Let $\varphi_n={\rm Ad}_{u_n}\circ\Phi, \psi_n={\rm Ad}_{u_n^*}\circ\Phi^{-1}$ and $\chi_n=\chi^*, \lambda_n=(\chi^*)^{-1}$, then it is immediate that $\varphi_n,\psi_n,\chi_n,\lambda_n$ satisfy the conditions in Definition \ref{5.11}, that is, $(X,\af)$ and $(X,\bt)$ are $C^*$-strongly approximately conjugate.
\vspace{0.2cm}

\noindent $(3)\Rightarrow (1)$: This follows immediately from Lemma \ref{7.8}.
\end{proof}

\vspace{1cm}

\noindent{\it Correspondence to be sent to: sihanwei2093@yeah.net}

\vspace{0.3cm}

\end{document}